\newenvironment{arxiv}{\ignorespaces}{}
\newcommand{\op}{\operatorname}
\newcommand{\CB}[2]{D_{#1}(#2)}
\newcommand{\VR}[2]{\op{Rips}_{#1}(#2)}
\newcommand{\diam}[1]{\op{diam}#1}
\newcommand{\Sph}[2]{S_{#1}(#2)}
\newcommand{\Vtx}[1]{\op{Vert}#1}
\newcommand{\cl}[1]{\op{Cl}#1}
\newcommand{\dGH}[2]{d_{\op{GH}}(#1,#2)}
\newcommand{\dis}{\op{dis}}
\newcommand{\hyp}[1]{\delta(#1)}
\newcommand{\K}[1]{K_{#1}}
\newcommand{\RC}[1]{C_{#1}}
\newcommand{\St}[1]{\operatorname{St}#1}
\newcommand{\card}[1]{\operatorname{card}#1}
\DeclareMathOperator{\cech}{\check{C}ech}
\DeclareMathOperator{\pr}{pr}
\providecommand{\leftsquigarrow}{%
  \mathrel{\mathpalette\reflect@squig\relax}%
}
\newcommand{\reflect@squig}[2]{%
  \reflectbox{$\m@th#1\rightsquigarrow$}%
}
\title{Gromov Hyperbolicity, Geodesic Defect, and Apparent Pairs in Vietoris--Rips Filtrations}
\titlerunning{Gromov Hyperbolicity, Geodesic Defect, and Apparent Pairs in Rips Filtrations}
\author{Ulrich Bauer}{Department of Mathematics and Munich Data Science Institute, Technical University of Munich (TUM), Germany \and 
\url{www.ulrich-bauer.org}}{mail@ulrich-bauer.org}{https://orcid.org/0000-0002-9683-0724}{}%
\author{Fabian Roll}{Department of Mathematics, Technical University of Munich (TUM), Germany \and \url{https://www.roll.science}}{fabian.roll@tum.de}{https://orcid.org/0000-0002-3604-4545}{}%
\authorrunning{U. Bauer and F. Roll} %
\keywords{Vietoris–Rips complexes, persistent homology, discrete Morse theory, apparent pairs, hyperbolicity, geodesic defect, Ripser} %
\begin{document}
\maketitle
\begin{abstract}
Motivated by computational aspects of persistent homology for Vietoris–Rips filtrations, we generalize a result of Eliyahu Rips on the contractibility of Vietoris–Rips complexes of geodesic spaces for a suitable parameter depending on the hyperbolicity of the space. We consider the notion of geodesic defect to extend this result to general metric spaces in a way that is also compatible with the filtration. We further show that for finite tree metrics the Vietoris–Rips complexes collapse to their corresponding subforests. We relate our result to modern computational methods by showing that these collapses are induced by the apparent pairs gradient, which is used as an algorithmic optimization in Ripser, explaining its particularly strong performance on tree-like metric data.
\end{abstract}

\section{Introduction}

The \emph{Vietoris--Rips} complex is a fundamental construction in algebraic, geometric, and applied topology.
For a metric space $X$ and a threshold $t > 0$, it is defined as the simplicial complex consisting of nonempty and finite subsets of $X$ with diameter at most $t$:
\[\VR{t}{X}=\{\emptyset\neq S\subseteq X\mid S\text{ finite, }\diam{S\leq} t\}.\]
First introduced by Vietoris~\cite{MR1512371} in order to make homology applicable to general compact metric spaces, it has also found important applications in geometric group theory \cite{Gromov1987} and topological data analysis \cite{SPBG:SPBG04:157-166}.
The role of the threshold in these three application areas is notably different.
The homology theory defined by Vietoris arises in the limit $t \to 0$.
In contrast, the key applications in geometric group theory rely on the fact that the Vietoris--Rips complex of a hyperbolic geodesic space is contractible for a sufficiently large threshold.
This observation, originally due to Rips and first published in Gromov's seminal paper on hyperbolic groups~\cite{Gromov1987}, is a fundamental result about the topology of Vietoris--Rips complexes and plays a central role in the theory of hyperbolic groups.
\begin{lemma}[Contractibility Lemma; Rips, Gromov~\cite{Gromov1987}]
  Let $X$ be a $\delta$-hyperbolic geodesic metric space. Then the complex $\VR{t}{X}$ is contractible for every $t > 0$ with $t \geq 4\delta$.
\end{lemma}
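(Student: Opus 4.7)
The plan is to exploit the slim-triangles characterization of $\delta$-hyperbolicity to assign a canonical ``center'' $c(S) \in X$ to every nonempty finite subset $S \subseteq X$ of diameter at most $t$, and then use this center to construct an explicit contraction of $\VR{t}{X}$---ideally realized by a discrete gradient, so that the argument aligns with the apparent-pairs theme of the paper.

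For the center lemma, I would pick a pair $x, y \in S$ realizing the diameter, fix a geodesic from $x$ to $y$, and take $c(S)$ to be its midpoint, so that $d(x, c(S)) = d(y, c(S)) = d(x,y)/2 \leq t/2$. For any third point $z \in S$, applying the slim-triangles condition to a geodesic triangle on $x, y, z$ places $c(S)$ within $\delta$ of some point $w$ lying on $[x,z]$ or $[y,z]$; combining the triangle inequality with a bound on $d(x,w)$ (or $d(y,w)$) coming from $d(x, c(S)) = d(x,y)/2$ yields $d(z, c(S)) \leq t/2 + \delta$. Since $t \geq 4\delta$ implies $t/2 + \delta \leq 3t/4 < t$, the set $S \cup \{c(S)\}$ is still a simplex of $\VR{t}{X}$.

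To convert this into a contraction, one natural route is to set up a matching on simplices by pairing $\sigma$ with $\sigma \mathbin{\triangle} \{c(\sigma)\}$: adjoining the center when it is absent, or removing it when present and the resulting face has the same center. Verifying that this yields an acyclic matching with a single critical vertex would then give collapsibility via discrete Morse theory. An alternative that sidesteps Morse bookkeeping is to write $\VR{t}{X}$ as the direct limit of its finite subcomplexes $\VR{t}{V}$ over finite $V \subseteq X$ and show that each such subcomplex is nullhomotopic inside the enlargement obtained by iteratively adjoining centers of simplices; after finitely many iterations one arrives at a subcomplex that collapses to a point via a cone construction centered on any common apex.

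The main obstacle will be coherence of the center assignment across face relations. Midpoints of geodesics and diameter-realizing pairs are not in general unique, so one must fix a choice (for instance via a well-ordering of $X$), but even then the center of $\sigma \cup \{c(\sigma)\}$ need not agree with $c(\sigma)$, which could break the matching's acyclicity or complicate the iterative argument. The generous slack $t/2 + \delta \leq 3t/4$ afforded by the $4\delta$ hypothesis---versus the weaker $2\delta$ needed merely to keep $c(S) \cup S$ admissible---is precisely the room needed to absorb these reassignments while maintaining the diameter bound; verifying that the construction terminates with a contractible object is the combinatorial crux of the proof.
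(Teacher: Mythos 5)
There is a genuine gap, and it sits exactly where you locate it yourself: the passage from the local ``center'' lemma to a global contraction. The pairing $\sigma \leftrightarrow \sigma \mathbin{\triangle} \{c(\sigma)\}$ is not a matching unless $c\bigl(\sigma \cup \{c(\sigma)\}\bigr) = c(\sigma)$, and with $c$ defined via a diameter-realizing pair and a geodesic midpoint this fails in general no matter how you break ties: adjoining $c(\sigma)$ can change which pair realizes the diameter. Even if one forces an involution by fiat, acyclicity of the resulting matching is the whole content of the theorem and is not addressed. Your fallback (direct limit over finite subcomplexes, iterating the center construction) ends with ``after finitely many iterations one arrives at a subcomplex that collapses to a point via a cone construction centered on any common apex,'' but no argument is given for why a common apex for all simplices should ever appear; the diameter of the vertex set need not shrink under adjoining midpoints. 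The paper avoids the coherence problem entirely by making the cone point depend only on a single vertex rather than on the whole simplex: in the proof of \cref{discrete-rips-collapse}, the points are ordered by distance to a basepoint $p$, and for the farthest point $x_i$ one produces (from geodesicity) a single point $z$ with $d(z,p) \leq d(x_i,p) - 2\delta$ and $d(z,x_i) \leq 2\delta$, after which the four-point condition shows that $z$ cones off \emph{every} simplex having $x_i$ as its maximal vertex. That matching is a genuine cone matching on each difference $K_i \setminus K_{i-1}$, hence automatically a discrete gradient, and the gradients assemble by \cref{union-of-gradients}.

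A secondary but concrete error concerns constants. The slim-triangles constant is not the four-point constant $\delta$ of \eqref{gromov-hyperbolic-alternative}; converting between the two formulations changes $\delta$ by a multiplicative factor, so ``within $\delta$ of a point on $[x,z]$ or $[y,z]$'' is not available as stated. The clean route is to apply the four-point condition directly to $z, c, x, y$ with $d(x,c)=d(y,c)=D/2$ for $D=\diam S$, which gives $d(z,c) \leq D/2 + 2\delta$. This is $\leq t$ precisely because $t \geq 4\delta$, with \emph{equality} possible when $\diam S = t$ and $t = 4\delta$. So the ``generous slack $t/2+\delta \leq 3t/4$'' that you are counting on to absorb the reassignment problems does not exist: the hypothesis $t \geq 4\delta$ is fully consumed just to keep $S \cup \{c(S)\}$ inside $\VR{t}{X}$, leaving nothing to repair the coherence of the matching.
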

Here, a metric space $(X,d)$ is called \emph{geodesic} if for any two points $x,y\in X$ there exists an isometric map $[0,d(x,y)]\to X$ such that $0\mapsto x$ and $d(x,y)\mapsto y$, and it is called \emph{$\delta$-hyperbolic} (in the sense of Gromov \cite{Gromov1987}) for $\delta\geq 0$ if for any four points $w,x,y,z\in X$ we have
\begin{equation}
\label{gromov-hyperbolic-alternative}
 d(w,x)+d(y,z)\leq \max\{d(w,y)+d(x,z),d(w,z)+d(x,y) \}+2\delta.
\end{equation}
Finally, in applications of Vietoris--Rips complexes to topological data analysis, one is typically interested in the \emph{persistent homology} of the entire filtration of complexes for all possible thresholds.
A notable difference to the classical applications is that the metric spaces under consideration are typically finite, and in particular not geodesic.
This motivates the interest in a meaningful generalization of the Contractibility Lemma to finite metric spaces.
Based on the notion of a \emph{discretely geodesic space} defined by Lang \cite{lang}, which is the natural setting for hyperbolic groups,
and motivated by techniques used in that paper, we consider the following quantitative geometric property (called \emph{$\nu$-almost geodesic} in \cite[p. 271]{MR2883376}).
\begin{definition}
 A metric space $X$ is \emph{$\nu$-geodesic} if for all $x,y\in X$ and $r,s\geq 0$ with $r+s=d(x,y)$ there exists a point $z\in X$ with $d(x,z)\leq r+\nu$ and $d(y,z)\leq s+\nu$.
The \emph{geodesic defect} of $X$, denoted by $\nu(X)$, is the infimum over all $\nu$ such that $X$ is $\nu$-geodesic.
\end{definition}

Our first main result is a generalization of the Contractibility Lemma that also applies to non-geodesic spaces using the notion of geodesic defect, and further produces collapses that are compatible with the Vietoris--Rips filtration above the collapsibility threshold.

\begin{theorem}
\label{discrete-rips-fitlered-collapse}
Let $X$ be a finite $\delta$-hyperbolic $\nu$-geodesic metric space.
Then there exists a discrete gradient
that induces, for every $u > t\geq 4\delta+2\nu$, a sequence of collapses
 \[\VR{u}{X}\searrow \VR{t}{X} \searrow \{*\}
 .\]
\end{theorem}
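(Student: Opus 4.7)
The strategy is to realize the Rips/Gromov contraction towards a basepoint as an acyclic matching on simplices rather than a geometric homotopy. Fix a basepoint $x_\star \in X$ together with a total order on $X$ having $x_\star$ as minimum. For each simplex $\sigma$ with $\diam{\sigma} > 0$, canonically choose a diameter-realizing pair $\{x, y\} \subseteq \sigma$ (say the lex-minimal such pair), and then invoke the $\nu$-geodesic property with $r = s = \diam{\sigma}/2$ to obtain an apex $z(\sigma) \in X$ with $d(x, z(\sigma)), d(y, z(\sigma)) \leq \diam{\sigma}/2 + \nu$, canonicalized via the total order. The matching pairs $\sigma$ with $\sigma \cup \{z(\sigma)\}$ whenever $z(\sigma) \notin \sigma$; this is modelled on the apparent-pairs construction used in Ripser, which selects each cofacet-pair by a lexicographic rule applied to the diameter function.

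\textbf{Key geometric estimate.} For any $w \in \sigma$, the four-point inequality~\eqref{gromov-hyperbolic-alternative} applied to the quadruple $(x, y, z(\sigma), w)$ gives
\[
\diam{\sigma} + d(z(\sigma), w) \leq \max\bigl\{d(x, z(\sigma)) + d(y, w),\; d(x, w) + d(y, z(\sigma))\bigr\} + 2\delta \leq \tfrac{3}{2}\diam{\sigma} + \nu + 2\delta,
\]
using $d(x, w), d(y, w) \leq \diam{\sigma}$ and $d(x, z(\sigma)), d(y, z(\sigma)) \leq \diam{\sigma}/2 + \nu$. Hence $d(z(\sigma), w) \leq \diam{\sigma}/2 + \nu + 2\delta$, and therefore $\op{diam}(\sigma \cup \{z(\sigma)\}) \leq \max(\diam{\sigma},\, 4\delta + 2\nu)$, which equals $\diam{\sigma}$ precisely when $\diam{\sigma} \geq 4\delta + 2\nu$.

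\textbf{From the estimate to the two collapses.} This bound is the whole point of the threshold: every matched pair $(\sigma, \tau)$ satisfies either $\diam{\sigma} = \diam{\tau}$ (large-diameter regime) or both diameters are $\leq 4\delta + 2\nu$ (small-diameter regime). Consequently, for any $t \geq 4\delta + 2\nu$ we have $\diam{\sigma} > t \Leftrightarrow \diam{\tau} > t$, and the matching restricted to diameters in $(t, u]$ and to $[0, t]$ realizes the two collapses $\VR{u}{X} \searrow \VR{t}{X} \searrow \{x_\star\}$. The only simplices not handled by the apex rule are the singletons $\{y\}$ (diameter $0$): $\{x_\star\}$ is the unique critical simplex, while each other singleton is paired with a canonically chosen edge $\{y, y'\} \in \VR{4\delta+2\nu}{X}$, whose existence follows from the connectedness of $\VR{4\delta+2\nu}{X}$ under $\nu$-geodesicity (iterated midpoints reduce distances while staying above $2\nu \leq 4\delta + 2\nu$).

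\textbf{Main obstacle.} The principal difficulty is guaranteeing that the matching is globally consistent and acyclic on all of $\VR{u}{X}$: the rule for $z(\sigma)$ must be canonical enough that $\sigma$ and its matched cofacet $\tau$ agree on their pair (i.e., the face-selection rule applied from $\tau$'s side returns $\sigma$), and the induced V-paths must strictly descend in a total order so that no directed cycle appears. Embedding the construction into the apparent-pairs formalism, where the pair structure is determined purely by the diameter function and a total order on simplices, is the natural way to meet both requirements; verifying these compatibility conditions will likely be the technical heart of the argument.
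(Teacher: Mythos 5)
Your geometric estimate is correct and is the right quantitative heart of the matter: the four-point condition applied to a diameter-realizing pair $\{x,y\}$, its near-midpoint $z$, and an arbitrary vertex $w$ does give $d(z,w)\leq \tfrac12\diam{\sigma}+\nu+2\delta$, hence $\diam{(\sigma\cup\{z\})}=\diam{\sigma}$ once $\diam{\sigma}\geq 4\delta+2\nu$, which is exactly the filtration-compatibility one needs. But the proof has a genuine gap, and it is precisely the one you flag at the end and then defer: the rule $\sigma\mapsto\sigma\cup\{z(\sigma)\}$ with $z(\sigma)$ determined by the lex-minimal diameter pair of $\sigma$ need not define a matching at all, let alone an acyclic one. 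If $\tau=\sigma\cup\{z(\sigma)\}$ acquires a lex-smaller diameter-realizing pair involving $z(\sigma)$ itself, then $z(\tau)\neq z(\sigma)$ and may lie outside $\tau$, so $\tau$ is assigned its own upward partner $\tau\cup\{z(\tau)\}$ while simultaneously being claimed as the upward partner of $\sigma$ — the "pairs" overlap and the vector field is ill-defined. Nothing in the canonicalization prevents this, and no descending order on V-paths is exhibited. The same conflict appears in your small-diameter regime, where a singleton is to be paired with an edge that the apex rule independently wants to pair upward with a triangle. Since the entire theorem is the assertion that a discrete gradient exists, deferring "globally consistent and acyclic" is deferring the theorem.

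The paper resolves exactly this difficulty by a structurally different decomposition. Instead of choosing an apex per simplex from its diameter pair, it fixes a basepoint $p$, orders the vertices by distance to $p$, and for each diameter value $r_m$ filters $\VR{r_m}{X}\setminus\VR{r_{m-1}}{X}$ by the maximal vertex $x_i$ of a simplex. For each $x_i$ it produces a single apex $z$ depending only on $x_i$ (a point with $d(z,p)\leq d(x_i,p)-2\delta$ and $d(z,x_i)\leq 2\delta+2\nu$, obtained from $\nu$-geodesicity applied to the pair $(x_i,p)$ with the asymmetric split $r=d(x_i,p)-2\delta-\nu$, $s=2\delta+\nu$, rather than the midpoint of the diameter pair). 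Because the apex is constant on each block, each block carries a cone matching $\sigma\leftrightarrow\sigma\cup\{z\}$, which is automatically a well-defined acyclic gradient, and the blocks are glued by \cref{union-of-gradients}; one must also check, as the paper does and you do not, that $\sigma\setminus\{z\}$ retains diameter $r_m$ and maximal vertex $x_i$, so that both halves of each pair lie in the same block. If you want to salvage your approach, you should either prove that your canonical choice of $z(\sigma)$ is constant on the intervals it creates (which I do not believe holds in general), or switch to an apex indexed by something coarser than the simplex, which is in effect what the paper does.
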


\begin{example}
\label{geod-defect-tree}
An important special case is given by a finite \emph{tree metric space} $(V,d)$, where~$V$ is the vertex set of a positively weighted tree $T=(V,E)$, and where the edge weights are taken as lengths and $d$ is the associated path length metric, i.e., for two points $x,y\in V$ their distance is the infimum total weight of any path starting in $x$ and ending in~$y$.
The geodesic defect is~$\nu(V)=\frac12 \max_{e\in E}{l}(e)$, where $l(e)$ is the length of the edge~$e$.
Moreover, $(V,d)$ is $0$-hyperbolic (see 
\cite[Theorems 3.38 and 3.40]{MR2351587} 
for a characterization of $0$-hyperbolic spaces)%
.%
\end{example}
This example is of particular relevance in the context of evolutionary biology, where persistent Vietoris--Rips homology has been successfully applied to identify recombinations and recurrent mutations \cite{chan2013topology,MR4071389,BleherHahnEtAl2021}.
The metrics arising as genetic distances of aligned RNA or DNA sequences are typically very similar to trees, capturing the phylogeny of the evolution.
This motivates our interest in the particular case of tree metrics. These metric spaces are known to have acyclic Vietoris--Rips homology in degree $>0$, and so any homology is an indication of some evolutionarily relevant phenomenon.

Our second main result is a strengthened version of \cref{discrete-rips-fitlered-collapse} for the special case of tree metric spaces that connects the collapses of the Vietoris--Rips filtration to the construction of \emph{apparent pairs}, which play an important role as a computational shortcut in the software Ripser \cite{MR4298669}.
This result depends on a particular ordering of the vertices: we say that a total order of $V$ is \emph{compatible} with the tree $T$ if it extends the unique tree partial order resulting from choosing some arbitrary root vertex as the minimal element.

\begin{theorem}
\label{discrete-rips-tree-fitlered-collapse}
Let $V$ be a finite tree metric space for a weighted tree $T=(V,E)$, whose vertices are totally ordered in a compatible way.
Then the apparent pairs gradient for the lexicographically refined Vietoris--Rips filtration
induces a sequence of collapses
 \[\VR{u}{V}\searrow \VR{t}{V} \searrow T_t\]
for every $u > t > 0$ such that no edge $e \in E$ has length $l(e) \in (t,u]$,
where $T_t$ is the subforest with vertices $V$ and all edges of $E$ with length at most $t$.
In particular, the persistent homology of the Vietoris--Rips filtration is trivial in degree $>0$.
\end{theorem}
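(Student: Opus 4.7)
The plan is to apply Forman's discrete Morse theory to the apparent pairs gradient. Since each apparent pair $(\sigma, \tau)$ consists of a facet-coface pair with $\diam \sigma = \diam \tau$, the gradient is automatically filtration-compatible, restricting to a discrete gradient on every sublevel $\VR{s}{V}$. The key claim is that its critical simplices are exactly the vertices of $V$ together with the edges of $T$, with a tree edge $e$ critical in $\VR{s}{V}$ iff $l(e) \leq s$. Granting this, Forman's theorem yields a collapse of $\VR{u}{V}$ onto the critical subcomplex, which coincides with $T_u$; the hypothesis that no edge of $T$ has length in $(t, u]$ means no new critical cells appear at filtration values in this range, so the simplices added in $(t, u]$ are matched in apparent pairs of equal diameter and collapse pairwise first, yielding $\VR{u}{V} \searrow \VR{t}{V}$. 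A further collapse of the remaining matched pairs inside $\VR{t}{V}$ then produces $T_t$, and since $T_t$ is a forest, the persistent homology vanishes in positive degrees.

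That vertices and tree edges are critical is straightforward. A vertex has no proper facet, and every containing edge has strictly positive diameter, so it cannot be paired upward either. For a tree edge $e = \{p, c\}$ with $p$ the parent of $c$ in the compatible order, any third vertex $w$ either lies in the subtree rooted at $c$ (so the path from $w$ to $p$ passes through $c$, giving $d(p, w) > l(e)$) or it does not (so the path from $c$ to $w$ passes through $p$, giving $d(c, w) > l(e)$). Either way $\diam(e \cup \{w\}) > l(e)$, so $e$ has no same-diameter coface, and its facets have diameter $0 \neq l(e)$, so $e$ has no same-diameter facet.

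The main step---and the principal obstacle---is showing that every simplex $\sigma$ with $|\sigma| \geq 2$ and $\sigma \notin E$ is in an apparent pair. I would introduce the sets $A(\sigma) = \{w \in V \setminus \sigma : d(w, x) \leq \diam \sigma \text{ for all } x \in \sigma\}$ and $B(\sigma) = \{v \in \sigma : \diam(\sigma \setminus \{v\}) = \diam \sigma\}$ of vertices addable or removable without changing the diameter. The key tree-metric input---a consequence of $0$-hyperbolicity---is that for any vertex $z$ on the geodesic between $x$ and $y$ in the tree, $d(z, w) \leq \max(d(x, w), d(y, w))$ for every $w$. It follows that if a diameter-realizing pair $(x, y)$ of $\sigma$ is not a tree edge, then every intermediate vertex on their tree geodesic lies in $A(\sigma)$; and if $(x, y)$ is itself a tree edge, then $|\sigma| \geq 3$ and any vertex of $\sigma \setminus \{x, y\}$ lies in $B(\sigma)$; in either case $A(\sigma) \cup B(\sigma) \neq \emptyset$. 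Setting $m = \min(A(\sigma) \cup B(\sigma))$ in the compatible order, the apparent partner of $\sigma$ is then prescribed by pairing with $\sigma \cup \{m\}$ if $m \in A(\sigma)$ and with $\sigma \setminus \{m\}$ if $m \in B(\sigma)$. Verifying that this prescription genuinely satisfies the two apparent pair conditions simultaneously---the lex-maximality of the facet and the lex-minimality of the coface among same-diameter neighbors---requires a careful case analysis exploiting the compatibility of the total order with the tree's root-based partial order, and is the main technical step.
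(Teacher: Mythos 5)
Your outline follows the same strategy as the paper—discrete Morse theory applied to the apparent pairs gradient, with vertices and tree edges as the only critical cells—and your argument that vertices and tree edges are critical is fine. Your pairing prescription via $m=\min(A(\sigma)\cup B(\sigma))$ does in fact agree with the paper's (where the partner vertex is $\min(\Sigma_i\setminus e_i)$ for $e_i$ the lexicographically maximal diameter-realizing non-tree edge of $\sigma$ and $\Sigma_i$ the union of all simplices having $e_i$ as their maximal such edge). But the entire content of the theorem sits in the step you defer. The danger is concrete: suppose $m\in A(\sigma)$ and you pair $\sigma$ upward with $\tau=\sigma\cup\{m\}$. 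For $(\sigma,\tau)$ to be an apparent pair you need $\sigma$ to be the lexicographically maximal facet of $\tau$, i.e.\ $m=\min\{v\in\tau : \diam(\tau\setminus\{v\})=\diam\tau\}$. A vertex $v\in\sigma$ with $v<m$ that is \emph{not} removable from $\sigma$ can become removable from $\tau$, because adding $m$ may create a new diameter-realizing edge $\{m,x\}$ with $x\in\sigma\setminus\{v\}$; then $\tau\setminus\{v\}$, not $\sigma$, is the maximal facet and the pair is destroyed. Symmetrically, a cofacet $\sigma\cup\{p\}$ with $p\notin A(\sigma)\cup B(\sigma)$-minimizing position but $p<m$ impossible must be ruled out when $p$ creates a lexicographically larger diameter edge. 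Ruling these out is exactly where the tree geometry and the \emph{compatible} order enter (the hypothesis your plan barely uses), and it fails for general metrics, so no soft argument can close it.

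In the paper this verification is the bulk of Section~4: one first shows every non-tree edge $e$ of length $r_m$ has a unique maximal coface $\Delta_e$ (\cref{contained-in-max-simplex}), organizes the cofaces of non-tree edges into intervals $[e_i,\Sigma_i]$ indexed by the maximal diameter edge (the perturbed gradient, \cref{largest-edge-sigma-i}), and then proves in \cref{collapse_apparent}—via the zig-zag decomposition of tree geodesics $u\leftsquigarrow z\rightsquigarrow w$ and \cref{point-on-path-interior}, which supplies an interior path vertex lying in $L_\Delta$ and \emph{below} any offending vertex $p$ in the compatible order—that the minimal vertex of $\Sigma_i\setminus e_i$ simultaneously realizes the minimal cofacet and maximal facet conditions. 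Your plan names the right difficulty but supplies neither this organizing interval structure nor the order-theoretic argument that resolves it, so as it stands the proof has a genuine gap at its central step. (Two smaller points: interior vertices of the geodesic of a diameter pair may already lie in $\sigma$, in which case they belong to $B(\sigma)$ rather than $A(\sigma)$, so state the dichotomy accordingly; and by \cref{contained-in-max-simplex} the case of a diameter-realizing tree edge in a simplex with at least three vertices cannot occur at all.)
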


In the special case of trees with unit edge length, the proofs in \cite[Proposition 2.2]{MR3096593} and \cite[Proposition 3]{MR4130978} are similar in spirit to our proof of \cref{tree-collapse}, which is based on discrete Morse theory. Related results about implications of the geometry of a metric space on the homotopy types of the associated Vietoris--Rips complexes can be found in \cite{MR3003901,MR3968598,MR1879057}.

 \begin{remark}
  Given a vertex order $\leq$, the lexicographic order on simplices for the reverse vertex order~$\geq$ coincides with the reverse colexicographic order for the original order~$\leq$, which is used for computations in Ripser. %
  As a consequence, when the input is a tree metric with the points ordered in reverse order of the distances to some arbitrarily chosen root, then Ripser will identify all non-tree simplices in apparent pairs, requiring not a single column operation to compute its trivial persistent homology.
  In practice, we observe that on data that is almost tree-like, such as genetic evolution distances, Ripser exhibits exceptionally good computational performance.
  The results of this paper provide a partial geometric explanation for this behavior and yield a heuristic for preprocessing tree-like data by sorting the points to speed up the computation in such cases.
  In the application to the study of SARS-CoV-2 described in \cite{BleherHahnEtAl2021}, ordering the genome sequences in reverse chronological order, as an approximation of the reverse tree order for the phylogenetic tree, lead to a huge performance improvement, bringing down the computation time for the persistence barcode from a full day to about 2 minutes.
 \end{remark}

\section{Preliminaries}

\subsection{Discrete Morse theory and the apparent pairs gradient}

A \emph{simplicial complex $K$} on a {vertex set} $\Vtx K$ is a collection of nonempty finite subsets of $\Vtx K$ such that for any set $\sigma\in K$ and any nonempty subset $\rho\subseteq\sigma$ one has $\rho\in K$.
A set~$\sigma \in K$ is called a \emph{simplex}, and $\dim\sigma=\card\sigma-1$ is its \emph{dimension}.
Moreover, $\rho$ is said to be a \emph{face of $\sigma$} and $\sigma$ a \emph{coface of $\rho$}.
If $\dim\rho=\dim\sigma-1$, then we call $\rho$ a \emph{facet of $\sigma$} and $\sigma$ a \emph{cofacet of $\rho$}.
The \emph{star of $\sigma$}, $\St{\sigma}$, is the set of cofaces of $\sigma$ in $K$, and the \emph{closure of $\sigma$}, $\cl{\sigma}$, is the set of its faces. For a subset $E\subseteq K$, we write $\St{E}=\bigcup_{e\in E}\St{e}$.

Generalizing the ideas of Forman \cite{MR1612391}, a function $f\colon K\to \mathbb{R}$ is a \emph{discrete Morse function} \cite{MR3605986,MR2537376} if 
$f$ is monotonic, i.e., for any $\sigma,\tau \in K$ with $\sigma \subseteq \tau$ we have $f(\sigma)\leq f(\tau)$, and
there exists a partition of $K$ into intervals $[\rho,\phi] = \{ \psi \in K \mid \rho \subseteq \psi \subseteq \phi \}$ in the face poset such that $f(\sigma) = f(\tau)$ for any $\sigma \subseteq \tau$ if and only if $\sigma$ and $\tau$ belong to a common interval in the partition.
The collection of \emph{regular} intervals, $[\rho,\phi]$ with $\rho \neq \phi$, is called the \emph{discrete gradient of $f$}, and any singleton interval $[\sigma,\sigma]$, as well as the corresponding simplex $\sigma$, is called \emph{critical}.

\begin{proposition}[Hersh {\cite[Lemma 4.1]{MR2164921}}; Jonsson {\cite[Lemma 4.2]{MR2368284}}]
\label{union-of-gradients}
Let $K$ be a finite simplicial complex, and $\{K_\alpha\}_{\alpha \in A}$ a set of subcomplexes covering $K$, each equipped with a discrete gradient $V_\alpha$, such that for any simplex of $K$
\begin{itemize}
\item
there is a unique minimal subcomplex $K_\alpha$ containing that simplex, and
\item
the simplex is critical for the discrete gradients of all other such subcomplexes.
\end{itemize}
Then the regular intervals in the $V_\alpha$ are disjoint, and their union is a discrete gradient on~$K$.
\end{proposition}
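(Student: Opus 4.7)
The plan is to establish the two assertions of the proposition in turn. Disjointness reduces immediately to the ``minimal-$K_\alpha$'' hypothesis, and the main work is then to promote the union $V=\bigcup_\alpha V_\alpha$ to an honest discrete gradient on all of $K$ by producing a compatible discrete Morse function.

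First, for disjointness, suppose a simplex $\sigma$ belongs to a regular interval $[\rho,\phi]\in V_\alpha$. Since every element of $[\rho,\phi]$ is non-critical in $V_\alpha$, the hypothesis forces $K_\alpha$ to be the unique minimal subcomplex containing $\sigma$. If $\sigma$ also lay in a regular interval of some $V_\beta$, the same reasoning would give $K_\beta$ minimal, hence $\alpha=\beta$; since $V_\alpha$ partitions $K_\alpha$, the two intervals then coincide.

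Next, I would declare the desired partition $\Pi$ of $K$ to consist of all regular intervals from the $V_\alpha$, together with singletons $\{\sigma\}$ for each simplex $\sigma$ that is critical in its unique minimal subcomplex. By disjointness together with the covering and minimality hypotheses, $\Pi$ is a partition of $K$ into face-poset intervals whose regular members are exactly the union $V$. To upgrade $\Pi$ to a discrete gradient, it suffices to produce a monotonic function $f\colon K\to\mathbb{R}$ that is constant exactly on the elements of $\Pi$, which, via the standard equivalence with Forman's matching formulation, amounts to checking that the matching built from $V$ is acyclic: the directed graph on $K$ obtained from the Hasse diagram by reversing every matched cofacet--facet edge should have no directed cycles. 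Once this is in place, a topological sort of the induced quotient poset on $\Pi$ supplies the function $f$.

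For the acyclicity, suppose such a cycle
\[
\sigma_1\subset\tau_1\supset\sigma_2\subset\tau_2\supset\cdots\supset\sigma_k\subset\tau_k\supset\sigma_1
\]
exists, with each matched pair $(\sigma_i,\tau_i)\in V_{\alpha_i}$. From $\sigma_{i+1}\subseteq\tau_i\in K_{\alpha_i}$ we obtain $\sigma_{i+1}\in K_{\alpha_i}$, and the non-criticality of $\sigma_{i+1}$ in $V_{\alpha_{i+1}}$ forces $K_{\alpha_{i+1}}\subseteq K_{\alpha_i}$ by the minimality hypothesis. Cyclically, $K_{\alpha_1}\supseteq K_{\alpha_2}\supseteq\cdots\supseteq K_{\alpha_k}\supseteq K_{\alpha_1}$, so all the $\alpha_i$ coincide and the cycle lies inside a single $V_{\alpha_1}$, contradicting its acyclicity. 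The main obstacle, in my view, is the bookkeeping that bridges the interval-partition definition of a discrete gradient used in the excerpt with the acyclic-matching viewpoint in which the cycle argument is transparent; once this equivalence is invoked, the ``minimal-$K_\alpha$'' hypothesis does all the real work, first by confining each regular interval to a single $V_\alpha$ and then by forcing any candidate cycle to descend monotonically through the subcomplexes until it closes inside one of them.
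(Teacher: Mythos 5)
The paper does not actually prove this proposition --- it is imported verbatim from Hersh and Jonsson with a citation and used as a black box --- so there is no in-text proof to compare against. Your argument is, in outline, exactly the standard proof from those references, and both of its steps are sound: disjointness because a simplex lying in a regular interval of $V_\alpha$ forces $K_\alpha$ to be its unique minimal subcomplex, and acyclicity because any closed gradient path would force the cyclic chain $K_{\alpha_1}\supseteq K_{\alpha_2}\supseteq\cdots\supseteq K_{\alpha_k}\supseteq K_{\alpha_1}$ of minimal subcomplexes to stabilize, confining the cycle to a single $V_\alpha$ and contradicting that $V_\alpha$ is a gradient there. The one place that still needs finishing is the bridge you yourself flag: in this paper a discrete gradient is a partition into intervals $[\rho,\phi]$ that need not be single facet--cofacet pairs, so ``the matching built from $V$'' is not literally a matching, and your cycle is written in the alternating $V$-path form that presupposes every $\sigma_i$ is regular. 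To close this cleanly, work with the quotient relation on the intervals of $\Pi$ (declare $I<J$ when some simplex of $I$ is a proper face of some simplex of $J$ with $I\neq J$), note that a cycle in its transitive closure can be shortened so that it visits only regular intervals (a critical singleton on a cycle is excised by transitivity, and a cycle consisting solely of critical singletons is impossible for dimension reasons), and then run your descending-subcomplex argument; a linear extension of the resulting acyclic quotient order supplies the monotonic function whose regular intervals are precisely $\bigcup_\alpha V_\alpha$. These are routine adjustments rather than a gap in the idea.
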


An \emph{elementary collapse} $K\searrow K\setminus \{\sigma,\tau\}$ is the removal of a pair of simplices, where $\sigma$ is a facet of $\tau$, with $\tau$ the unique proper coface of $\sigma$. A \emph{collapse} $K\searrow L$ onto a subcomplex $L$ is a sequence of elementary collapses starting in $K$ and ending in $L$. An elementary collapse can be realized continuously by a strong deformation retraction and therefore collapses preserve the homotopy type. A discrete gradient can encode a collapse.
\begin{proposition}[{Forman \cite{MR1612391}; see also \cite[Theorem 10.9]{MR4249617}}]
\label{collapsing-theorem}
 Let $K$ be a finite simplicial complex and let $L\subseteq K$ be a subcomplex. Assume that $V$ is a discrete gradient on $K$ such that the complement $K\setminus L$ is the union of intervals in $V$. Then there exists a collapse $K\searrow L$.
\end{proposition}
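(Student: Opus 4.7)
The plan is to identify the critical simplices of the apparent pairs gradient on $\VR{u}{V}$ as exactly the simplices of $T_u$ (the vertices together with the tree edges of length at most $u$), so that the collapses follow from Proposition~\ref{collapsing-theorem} once we observe that apparent pairs preserve diameters and therefore the gradient restricts coherently to each sublevel set of the filtration. The easy half of this identification is checking that vertices and tree edges are critical. Vertices have no facets and only cofacets of strictly positive diameter, so they cannot lie in an apparent pair. For a tree edge $e=\{a,b\}$ of length $\ell = d(a,b)$, removing $e$ splits $T$ into two subtrees $T_a\ni a$ and $T_b\ni b$; any $v\notin\{a,b\}$ lies in one of them, forcing $d(b,v)=\ell+d(a,v)>\ell$ or the symmetric inequality, so no cofacet $\{a,b,v\}$ has diameter $\ell$. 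Combined with the fact that the facets of $e$ are vertices of diameter $0<\ell$, this shows that $e$ has neither an apparent facet nor an apparent cofacet.

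The heart of the proof is to show that every remaining simplex $\sigma$ with $|\sigma|\ge 2$ that is not a tree edge participates in an apparent pair. Writing $d=\operatorname{diam}(\sigma)$, I introduce the set of \emph{addable} vertices $N(\sigma)=\{v\in V\setminus\sigma : d(v,u)\le d\text{ for all }u\in\sigma\}$ and the set of \emph{removable} vertices $R(\sigma)=\{w\in\sigma : \operatorname{diam}(\sigma\setminus\{w\})=d\}$. Under the convention that simplices of equal diameter are compared by their decreasing vertex tuples lexicographically (so that smaller added vertices give smaller cofacets and smaller removed vertices give larger facets), the candidate apparent cofacet of $\sigma$ is $\sigma\cup\{\min N(\sigma)\}$ and the candidate apparent facet is $\sigma\setminus\{\min R(\sigma)\}$. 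The plan is to argue that exactly one of the following holds: either $N(\sigma)\ne\emptyset$ and $\min N(\sigma)<\min R(\sigma)$ (with $\min R(\sigma)=+\infty$ when $R(\sigma)=\emptyset$), in which case $(\sigma,\sigma\cup\{\min N(\sigma)\})$ is an apparent pair with $\sigma$ as the lower simplex; or $R(\sigma)\ne\emptyset$ (which holds automatically when $|\sigma|\ge 3$) and $(\sigma\setminus\{\min R(\sigma)\},\sigma)$ is an apparent pair with $\sigma$ as the upper simplex. The crucial tree-metric input is a Helly-type property, following from the four-point condition for $0$-hyperbolic spaces: for any diameter-realizing pair $(a,b)$ of $\sigma$, the intersection $B(a,d)\cap B(b,d)$ of closed balls in the tree has diameter at most $d$ itself. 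Applied to a diameter pair contained in $\sigma\setminus\{\min R(\sigma)\}$ (which exists by definition of removability), this inequality restricts which external vertices can lie in $N(\sigma\setminus\{\min R(\sigma)\})$ and forces the lex-minimum cofacet of the candidate facet to be $\sigma$, confirming the apparent pair in the upper case; an analogous analysis handles the lower case.

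With the critical simplices identified as $T_u$, the collapses assemble as claimed. Since apparent pairs always have equal diameter, restricting the gradient to $\VR{t}{V}\subseteq\VR{u}{V}$ yields a gradient whose critical simplices are exactly those of $T_t$, and Proposition~\ref{collapsing-theorem} produces $\VR{t}{V}\searrow T_t$. Under the hypothesis that no tree edge has length in $(t,u]$, we have $T_u=T_t$, and every simplex of $\VR{u}{V}\setminus\VR{t}{V}$ has diameter in $(t,u]$ and is thus neither a vertex nor a tree edge; by the second step each is paired with a partner of equal diameter, which therefore also lies in $\VR{u}{V}\setminus\VR{t}{V}$. A further application of Proposition~\ref{collapsing-theorem} gives $\VR{u}{V}\searrow\VR{t}{V}$, and concatenation yields the stated sequence of collapses. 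Triviality of the persistent homology in positive degrees follows at once from $T_t$ being a forest.

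The principal obstacle I anticipate is the careful verification of the apparent pair conditions in the pairing step. Trouble can arise when $\min N(\sigma)$ happens to lie at distance exactly $d$ from some vertex of $\sigma$, so that adjoining it creates an additional diameter-realizing pair and enlarges $R(\sigma\cup\{\min N(\sigma)\})$ beyond $\{\min N(\sigma)\}\cup R(\sigma)$; symmetrically, when computing $N(\sigma\setminus\{\min R(\sigma)\})$ for the upper case, one must rule out small vertices outside $\sigma$ that satisfy the ball conditions with respect to $\sigma\setminus\{\min R(\sigma)\}$ but fail them with respect to $\min R(\sigma)$. In each such degenerate configuration one must combine the Helly-type inequality with the compatibility of the vertex order with the tree partial order to confirm that the lex-minimum cofacet and lex-maximum facet conditions actually agree. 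The tree structure ultimately forces these conditions to hold, but traversing the case analysis is where the bulk of the technical work lies.
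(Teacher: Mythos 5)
Your proposal does not address the statement you were asked to prove. The statement is \cref{collapsing-theorem}, a general fact of discrete Morse theory: for an arbitrary finite simplicial complex $K$, a subcomplex $L$, and a discrete gradient $V$ whose regular intervals partition $K\setminus L$, there is a collapse $K\searrow L$. What you have written is instead a proof sketch for \cref{discrete-rips-tree-fitlered-collapse} (the collapse of Vietoris--Rips complexes of tree metrics via the apparent pairs gradient). Worse, your argument explicitly \emph{invokes} \cref{collapsing-theorem} in several places (``the collapses follow from Proposition~\ref{collapsing-theorem} once we observe\dots'', ``A further application of Proposition~\ref{collapsing-theorem} gives\dots''), so as a proof of that proposition it is circular, and as written it contains no argument at all for the actual claim: nothing in your text explains why a gradient covering $K\setminus L$ by regular intervals can be converted into a sequence of elementary collapses.

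To prove the statement itself, you would need to argue roughly as follows: the acyclicity of the discrete gradient (equivalently, the existence of the underlying discrete Morse function $f$) lets you order the regular intervals contained in $K\setminus L$ so that each can be removed in turn without violating the requirement that an elementary collapse removes a free facet--cofacet pair; within a single interval $[\rho,\phi]$ one uses the cone structure of $[\rho,\phi]$ over $\rho$ to peel off facet--cofacet pairs from the top down. Processing intervals in decreasing order of $f$-value (breaking ties consistently with the face poset) guarantees that when a pair $(\sigma,\tau)$ is removed, $\tau$ has no remaining proper cofaces and $\sigma$ has no proper coface other than $\tau$. None of this machinery appears in your proposal, and the tree-metric and apparent-pairs material you develop, whatever its merits for \cref{discrete-rips-tree-fitlered-collapse}, is irrelevant to the general proposition.
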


 Let $f\colon K\to \mathbb{R}$ be a monotonic function. Assume that the vertices of $K$ are totally ordered. The \emph{$f$-lexicographic order} is the total order $\leq_f$ on $K$ given by ordering the simplices
 \begin{itemize}
  \item by their value under $f$,
  \item then by dimension,
  \item then by the lexicographic order induced by the total vertex order.
 \end{itemize}

We call a pair $(\sigma,\tau)$ of simplices in $K$ a \emph{zero persistence pair} if $f(\sigma)=f(\tau)$. An \emph{apparent pair} $(\sigma,\tau)$ with respect to the $f$-lexicographic order is a pair of simplices in~$K$ such that $\sigma$ is the maximal facet of $\tau$, and $\tau$ is the minimal cofacet of $\sigma$. The collection of apparent pairs forms a discrete gradient \cite[Lemma 3.5]{MR4298669}, called the \emph{apparent pairs gradient}.

Assume that $K$ is finite and $f\colon K\to \mathbb{R}$ a discrete Morse function with discrete gradient~$V$. Refine $V$ to another discrete gradient
\[\widetilde V=\{(\psi\setminus \{v\},\psi\cup\{v\})\mid \psi\in  [\rho,\phi]\in V,\ v=\min(\phi\setminus\rho) \}\]
by doing a minimal vertex refinement on each interval.\begin{lemma}
	\label{apparent-equal-gradient}
	 The zero persistence apparent pairs with respect to the $f$-lexicographic order are precisely the gradient pairs of $\widetilde V$.
\end{lemma}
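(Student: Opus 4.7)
The plan is to establish both inclusions of the claimed equality by reducing to a single regular interval of $V$. The key reduction comes directly from the discrete Morse property of $f$: two simplices in a face relation have equal $f$-value if and only if they lie in a common interval of the partition. Consequently, any zero persistence pair $(\sigma,\tau)$ with $\sigma$ a facet of $\tau$ is supported in some regular interval $[\rho,\phi]\in V$, and its simplices can be parametrised as $\rho\cup T$ with $T\subseteq\phi\setminus\rho$. All facets of $\tau$ and cofacets of $\sigma$ lying outside $[\rho,\phi]$ have strictly different $f$-value and are therefore separated from $\tau$ and $\sigma$ by the first coordinate of the $f$-lexicographic order, so they cannot affect the maximal facet or minimal cofacet conditions.

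The central lex calculation is as follows. Among simplices of common $f$-value and common dimension, the lex order on sorted tuples is such that removing the smallest vertex of $\tau\setminus\rho$ from $\tau\in[\rho,\phi]$ produces the lex-largest facet of $\tau$ in the interval, and adding the smallest vertex of $\phi\setminus\sigma$ to $\sigma\in[\rho,\phi]$ produces the lex-smallest cofacet of $\sigma$ in the interval. Setting $v=\min(\phi\setminus\rho)$, this gives the forward inclusion at once: for a pair $(\sigma,\sigma\cup\{v\})\in\widetilde V$, both simplices lie in $[\rho,\phi]$ so the $f$-values agree, and $v\notin\sigma$ gives $v=\min((\sigma\cup\{v\})\setminus\rho)=\min(\phi\setminus\sigma)$; thus $\sigma$ is the lex-largest facet of $\sigma\cup\{v\}$ among those with matching $f$-value and $\sigma\cup\{v\}$ is the lex-smallest cofacet of $\sigma$ among those with matching $f$-value, which is the apparent pair condition.

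For the reverse inclusion, let $(\sigma,\tau)$ be a zero persistence apparent pair, sharing the interval $[\rho,\phi]$, and write $\tau=\sigma\cup\{w\}$ with $w\in\phi\setminus\rho$. The maximal facet condition forces $w=\min(\tau\setminus\rho)$ and the minimal cofacet condition forces $w=\min(\phi\setminus\sigma)$. A short case split on whether $v\in\sigma$ closes the argument: if $v\in\sigma$ then $v\in\sigma\setminus\rho\subseteq\tau\setminus\rho$ forces $w\leq v$, while $v\notin\phi\setminus\sigma$ together with $w\in\phi\setminus\sigma\subseteq\phi\setminus\rho$ forces $w>v$, a contradiction; hence $v\notin\sigma$, and then $w=\min(\phi\setminus\sigma)=v$, so $(\sigma,\tau)=(\sigma,\sigma\cup\{v\})\in\widetilde V$.

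The main obstacle is pure bookkeeping rather than any substantial geometric or combinatorial argument: one has to line up the sorted-tuple lex comparison with the interval parametrisation by subsets of $\phi\setminus\rho$ without sign errors, and invoke the Morse condition precisely once to rule out facets and cofacets lying outside the interval. The restriction of $\widetilde V$ to each interval $[\rho,\phi]$ is the standard "toggle the minimum" acyclic matching on the Boolean lattice of subsets of $\phi\setminus\rho$, and the $f$-lexicographic order is tailored so that the apparent pair rule selects exactly this matching.
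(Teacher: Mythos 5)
Your proof is correct and follows essentially the same route as the paper's: both directions reduce to the regular interval $[\rho,\phi]$ containing the pair and hinge on the observation that removing (resp.\ adding) the minimal available vertex yields the lexicographically largest facet (resp.\ smallest cofacet) within that interval, with out-of-interval facets and cofacets ruled out by the $f$-value. Your write-up is, if anything, slightly more explicit than the paper's about why simplices outside the interval cannot interfere.
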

\begin{proof}
	Let $(\sigma,\tau)$ be a zero persistence apparent pair. Then $f(\sigma)=f(\tau)$, and $\sigma$ and $\tau$ are contained in the same regular interval $I=[\rho,\phi]$ of $V$. Let $v$ be the minimal vertex in $\phi\setminus \rho$. By assumption,
	$\sigma$ is the maximal facet of $\tau$, and $\tau$ is the minimal cofacet of $\sigma$. Hence, $\sigma$ is lexicographically maximal among all facets of $\tau$ in~$I$, and $\tau$ is lexicographically minimal under all cofacets of $\sigma$ in~$I$.
	By the assumption that $(\sigma,\tau)$ forms an apparent pair, we cannot have $v \in \sigma$,
	as otherwise $\tau \setminus \{v\}$ would be a larger facet of $\tau$ than $\sigma$.
	Similarly, we cannot have $v \notin \tau$,
	as otherwise $\sigma \cup \{v\}$ would be a smaller cofacet of $\sigma$ than $\tau$.
	This means that~$\tau=\sigma\cup \{v \}$ and therefore $\{\sigma,\tau\}\in \widetilde V$.
	
	Conversely, assume that $\{\sigma,\tau\}\in\widetilde V$ holds. Consider the interval $I=[\rho,\phi]$ of $V$ with~$\{\sigma,\tau\}\subseteq I$ and let $v$ be the minimal vertex in $\phi\setminus \rho$. By construction of $\widetilde{V}$, $\sigma=\tau\setminus\{v\}$ is the lexicographically maximal facet of $\tau$ in $I$ and $\tau=\sigma\cup\{v\}$ is the lexicographically minimal cofacet of $\sigma$ in $I$. Therefore, $(\sigma,\tau)$ is a zero persistence apparent pair.
\end{proof}

\begin{arxiv}
\subsection{Hyperbolicity and geodesic defect of metric spaces}
In this section, we establish some basic facts about the hyperbolicity and the geodesic defect of metric spaces, such as their stability with respect to the Gromov–Hausdorff distance.
While this is already known for the hyperbolicity \cite{MemoliOkutanEtAl2021}, the stability of the geodesic defect, to the best of our knowledge, was not explicitly addressed before.
The results shown here are independent of the results in the following sections.

If $X$ is $\delta$-hyperbolic, then it is also $\delta'$-hyperbolic for every $\delta'\geq \delta$.
With this in mind, it is natural to consider the infimum over all $\delta$ such that $X$ is~$\delta$-hyperbolic, which is called the \emph{hyperbolicity} $\hyp{X}$ of $X$.
It follows from \cref{gromov-hyperbolic-alternative} that we have
\begin{equation}
\hyp{X} =  \sup_{w,x,y,z\in X}\frac{
d(w,x)+d(y,z) - \max\{d(w,y)+d(x,z),d(w,z)+d(x,y)\}}{2},
\end{equation}
and from this equivalent description of the hyperbolicity it can be seen that $X$ is indeed $\hyp{X}$-hyperbolic.
Moreover, note that every subspace of a $\delta$-hyperbolic space is also $\delta$-hyperbolic.
We now demonstrate that similar spaces have similar hyperbolicity.

A \emph{correspondence}~$C$ between two metric spaces $X$ and $Y$ is a subset $C\subseteq X\times Y$ such that $\pr_X(C)=X$ and $\pr_Y(C)=Y$. The \emph{Gromov--Hausdorff distance} between $X$ and $Y$ is defined as
\[
\dGH{X}{Y}=\frac{1}{2}\inf\{\dis(C)\mid C \text{ correspondence between $X$ and $Y$}\},
\] where $\dis(C)=\sup\{|d_X(x,x')-d_Y(y,y') |\mid (x,y),(x',y')\in C \}$ is the distortion of $C$.

\begin{proposition}[Mémoli et al.~\cite{MemoliOkutanEtAl2021}]
\label{hyperbolic-stability}
Let $X$ and $Y$ be metric spaces and $s=\dGH{X}{Y}$. If $X$ is $\delta$-hyperbolic, then $Y$ is $(\delta+4s)$-hyperbolic.
Hence, $|\hyp{X}-\hyp{Y}| \leq 4\dGH{X}{Y}$.
\end{proposition}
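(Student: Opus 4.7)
The plan is a direct unpacking of the definitions of Gromov--Hausdorff distance and Gromov hyperbolicity. For any $\epsilon > 0$, I would choose a correspondence $C \subseteq X \times Y$ with $\dis(C) < 2s + \epsilon$, and for any four points $w', x', y', z' \in Y$ I would pick partners $w, x, y, z \in X$ with the four pairs $(w,w'), (x,x'), (y,y'), (z,z')$ all lying in $C$. The whole proof then boils down to chaining three inequalities.

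First I would use the bound $|d_X(a,b) - d_Y(a',b')| < 2s + \epsilon$ (valid for any two correspondence pairs) twice on the left-hand side of the hyperbolicity inequality, replacing $d_Y(w',x') + d_Y(y',z')$ by $d_X(w,x) + d_X(y,z) + 2(2s + \epsilon)$. Next, applying the $\delta$-hyperbolicity of $X$ bounds this by
\[
\max\{d_X(w,y) + d_X(x,z),\, d_X(w,z) + d_X(x,y)\} + 2\delta + 2(2s+\epsilon).
\]
Finally, two more distortion substitutions, now from $X$ back to $Y$, yield the bound $\max\{d_Y(w',y') + d_Y(x',z'),\, d_Y(w',z') + d_Y(x',y')\} + 2\delta + 4(2s + \epsilon)$. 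This shows that $Y$ is $(\delta + 4s + 2\epsilon)$-hyperbolic; since the defining inequality of $\delta'$-hyperbolicity is closed in $\delta'$ and $\epsilon > 0$ was arbitrary, $Y$ is in fact $(\delta + 4s)$-hyperbolic, proving the first claim.

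For the second statement, I would apply the first claim symmetrically: plugging in $\delta = \hyp{X}$ yields $\hyp{Y} \leq \hyp{X} + 4s$, and swapping the roles of $X$ and $Y$ gives the opposite inequality, hence $|\hyp{X} - \hyp{Y}| \leq 4\dGH{X}{Y}$.

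The only place where a careless approach would lose a constant is the bookkeeping of the four distortion substitutions (two per side of the four-point inequality), each contributing at most $2s + \epsilon$; together they give the expected $2 \cdot (4s)$ plus a vanishing error, which is exactly what produces the sharp factor of $4$ rather than $8$. Beyond this, I do not anticipate any substantial obstacle, as the argument is essentially a transfer of the four-point condition across the correspondence.
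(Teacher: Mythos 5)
Your argument is correct: the four distortion substitutions (two on the left-hand side of the four-point condition, two inside the maximum) each cost at most $\dis(C) < 2s+\epsilon$, giving the extra term $4\dis(C) < 8s+4\epsilon = 2(4s+2\epsilon)$, and the closedness of the non-strict inequality in $\delta'$ lets you pass to $\epsilon \to 0$; the symmetric application then yields $|\hyp{X}-\hyp{Y}|\leq 4\dGH{X}{Y}$. Note that the paper states \cref{hyperbolic-stability} with a citation to Mémoli et al.\ and gives no proof of its own, so there is nothing in the paper to compare against directly; your correspondence-transfer argument is the standard one and is the same technique the paper does write out for the analogous stability of the geodesic defect in \cref{geode-defect-stability}.
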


We now turn to the geodesic defect.
We start by establishing lower and upper bounds on the geodesic defect, relating it to other familiar geometric quantities.

\begin{proposition}
\label{gd-geq-zero}
 For any metric space $X$ we have $\nu(X)\geq \frac12\inf_{x\neq y}d(x,y)$.
\end{proposition}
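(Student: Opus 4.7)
The plan is to exploit the $\nu$-geodesic condition at the midpoint of a near-infimal pair of points. Let $m := \inf_{x \neq y} d(x,y)$. If $m = 0$ the inequality is vacuous since $\nu(X) \geq 0$ by definition, so I assume $m > 0$. I fix an arbitrary $\nu' > \nu(X)$, so that $X$ is $\nu'$-geodesic, and aim to show $\nu' \geq m/2$; the conclusion then follows by taking the infimum over such $\nu'$.

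By definition of the infimum, for any $\epsilon > 0$ I can choose $x \neq y$ in $X$ with $m \leq d(x,y) \leq m + \epsilon$. Setting $r = s = d(x,y)/2$, the $\nu'$-geodesic property produces a point $z \in X$ satisfying
\[ d(x,z) \leq d(x,y)/2 + \nu' \quad \text{and} \quad d(y,z) \leq d(x,y)/2 + \nu'. \]

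I would then split into two cases according to whether $z$ coincides with one of the endpoints. If $z \in \{x, y\}$, then one of the two displayed distances equals $d(x,y)$, forcing $d(x,y) \leq d(x,y)/2 + \nu'$, hence $\nu' \geq d(x,y)/2 \geq m/2$. If $z \notin \{x, y\}$, then $d(x,z) \geq m$ by definition of $m$, so $m \leq d(x,y)/2 + \nu' \leq (m + \epsilon)/2 + \nu'$, yielding $\nu' \geq (m - \epsilon)/2$. In either case $\nu' \geq (m - \epsilon)/2$, and letting $\epsilon \to 0$ gives the desired bound $\nu' \geq m/2$.

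The argument is essentially a direct case check; the only subtlety is that $m$ is an infimum rather than a minimum, which is why I work with a near-infimal pair and an $\epsilon$-slack rather than an exact minimizer. No substantial obstacle arises beyond this careful handling of the infimum, and a symmetric handling of the infimum defining $\nu(X)$.
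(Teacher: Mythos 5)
Your proof is correct and follows essentially the same route as the paper's: pick a near-infimal pair, apply the $\nu$-geodesic condition at the midpoint, case-split on whether the resulting point $z$ coincides with an endpoint, and let the slack $\epsilon$ tend to zero while handling the infimum defining $\nu(X)$. The only (harmless) difference is cosmetic: the paper splits on $z=w$ versus $z\neq w$ rather than $z\in\{x,y\}$, and it disposes of the $\nu(X)=\infty$ case explicitly at the outset.
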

\begin{proof}
 If $X$ is not $\nu$-geodesic for any $\nu$, then $\nu(X)=\infty$ and there is nothing to prove. Thus, assume that $X$ is $\nu$-geodesic. Let $\epsilon>0$ be arbitrary and let $u,w\in X$ be any two points with~$u\neq w$ and $d(u,w)-\epsilon\leq I:=\inf_{x\neq y}d(x,y)$. Then any other point has distance at least $d(u,w)-\epsilon$ to $u$ and $w$. As $X$ is $\nu$-geodesic, there exists a point $z\in X$ with $d(u,z)\leq \frac12 d(u,w)+\nu$ and $d(w,z)\leq \frac12 d(u,w)+\nu$. If $z=w$, then the first inequality implies $\frac12 I\leq \frac12 d(u,w)\leq \nu$ and hence $\frac12 I\leq \nu(X)$, because $\nu(X)$ is the infimum over all $\nu$ such that~$X$ is $\nu$-geodesic. If $z\neq w$, then
 \[d(u,w)-\epsilon\leq I\leq d(w,z)\leq \frac12 d(u,w)+\nu\]
 and therefore $\frac12 I-\epsilon \leq \frac12 d(u,w)-\epsilon\leq \nu$. Letting $\epsilon$ tend to zero implies $\frac12 I\leq \nu$ and hence~$\frac12 I\leq \nu(X)$, because $\nu(X)$ is the infimum over all $\nu$ such that $X$ is $\nu$-geodesic.
\end{proof}

A metric subset $X\subseteq Y$ is \emph{$r$-dense} if for every $y\in Y$ there exists $x\in X$ with $d(y,x)\leq r$. The following proposition proves an upper bound on the geodesic defect for $r$-dense subsets of a geodesic space. A partial converse for $\delta$-hyperbolic spaces is given by \cref{dense-embedding-injective-hull}.

\begin{proposition}
\label{subset-geodesic-small-defect}
 Let $X$ be an $r$-dense subset of a geodesic metric space $Y$. Then $X$ is~$r$-geodesic. In particular, $\nu(X)\leq r$.
\end{proposition}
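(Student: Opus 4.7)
The plan is to prove the $r$-geodesic property directly from the definition, exploiting the geodesic structure of the ambient space $Y$ to locate a candidate point, and then using the $r$-density hypothesis to pull a suitable approximation back into $X$.

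Concretely, fix $x,y\in X$ and $a,b\geq 0$ with $a+b=d(x,y)$ (I rename the decomposition parameters to avoid a clash with the density parameter $r$). Since $Y$ is geodesic and $d_Y$ restricts to $d_X$ on $X$, there exists an isometric map $\gamma\colon [0,d(x,y)]\to Y$ with $\gamma(0)=x$ and $\gamma(d(x,y))=y$. Setting $w:=\gamma(a)\in Y$ then gives a point of the ambient space with $d(x,w)=a$ and $d(y,w)=b$ exactly.

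The key step is to apply $r$-density of $X$ in $Y$ to the point $w$, producing some $z\in X$ with $d(w,z)\leq r$. Two applications of the triangle inequality then yield
\[d(x,z)\leq d(x,w)+d(w,z)\leq a+r,\qquad d(y,z)\leq d(y,w)+d(w,z)\leq b+r,\]
which is exactly the defining condition for $X$ to be $r$-geodesic. The bound $\nu(X)\leq r$ then follows at once from the definition of the geodesic defect as the infimum over admissible $\nu$.

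There is no real obstacle in this argument; it is essentially a one-line triangle-inequality reduction once the geodesic $\gamma$ has been introduced. The only care required is notational, namely disentangling the density parameter $r$ from the decomposition parameters in the definition of a $\nu$-geodesic metric space.
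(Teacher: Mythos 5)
Your proof is correct and follows exactly the same route as the paper's: parametrize a geodesic $\gamma$ from $x$ to $y$ in $Y$, take the point $\gamma(a)$ at the prescribed distance along it, use $r$-density to find a nearby $z\in X$, and conclude with the triangle inequality. The only difference is notational (your $a,b$ versus the paper's $t,s$).
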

\begin{proof}
  Let $x,y\in X$ be any two points and let $t,s\geq 0$ be with $t+s=d(x,y)$. Choose an isometric embedding $\gamma\colon [0,d(x,y)]\to Y$ with $\gamma(0)= x$ and $\gamma(d(x,y))= y$. As $X$ is $r$-dense, there exists a point $z\in X$ with $d(\gamma(t),z)\leq r$. By the triangle inequality, we get $d(x,z)\leq d(\gamma(0),\gamma(t))+d(\gamma(t),z)\leq t+r$ and similarly $d(y,z)\leq s+r$. This proves that $X$ is $r$-geodesic.
\end{proof}

A $0$-geodesic space is also called \emph{metrically convex} \cite{MR1904284}. If $X$ is a geodesic space, then its geodesic defect is zero, but the converse is not always true. In fact, for any \emph{length space}~%
\cite{MR1835418}, i.e., a metric space where the distance between two points is the infimum of lengths of paths connecting them, the geodesic defect is zero. It is worth noting that a complete metric space is a length space if and only if its geodesic defect is zero, and it is a geodesic space if and only if it is $0$-geodesic %
\cite[Section 2.4]{MR1835418}. The punctured unit disk in $\mathbb R^2$ is an example for a space that has geodesic defect~$0$ but that is not $0$-geodesic.
However, we have the~following.
\begin{lemma}
 Let $X$ be a proper metric space, i.e., assume that every closed metric ball is compact. Then~$X$ is~$\nu(X)$-geodesic.
\end{lemma}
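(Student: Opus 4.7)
My plan is to use the properness hypothesis to realize the infimum defining $\nu(X)$ by a compactness argument; the approximating ``near-midpoints'' produced by $\nu$-geodicity for $\nu$ slightly above $\nu(X)$ cannot escape to infinity, so they must accumulate in $X$. If $\nu(X)=\infty$ the conclusion is vacuous, so I may assume $\nu(X)<\infty$.

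First I would fix points $x,y\in X$ and nonnegative reals $r,s$ with $r+s=d(x,y)$. For each integer $n\geq 1$, the definition of $\nu(X)$ as an infimum, together with the obvious monotonicity of the $\nu$-geodesic condition in $\nu$, supplies a point $z_n\in X$ satisfying
\[ d(x,z_n)\leq r+\nu(X)+\tfrac{1}{n},\qquad d(y,z_n)\leq s+\nu(X)+\tfrac{1}{n}. \]
All these points lie in the closed ball $\overline{B}(x,r+\nu(X)+1)$, which is compact by properness, so I can extract a convergent subsequence $z_{n_k}\to z\in X$. Passing to the limit using continuity of the distance then yields $d(x,z)\leq r+\nu(X)$ and $d(y,z)\leq s+\nu(X)$, which is exactly the conclusion that $X$ is $\nu(X)$-geodesic.

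The only non-routine ingredient is the invocation of compactness, and it is indeed essential: in a non-proper setting the infimum may fail to be attained, as illustrated by the punctured unit disk example mentioned just before the lemma. I do not anticipate any further obstacle beyond checking that the selection of each $z_n$ is justified by the defining property of the infimum.
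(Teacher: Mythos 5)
Your proof is correct and follows essentially the same argument as the paper: for each $n$ use that $X$ is $(\nu(X)+\tfrac1n)$-geodesic to pick $z_n$, note that the $z_n$ lie in a compact closed ball by properness, and pass to a convergent subsequence whose limit realizes the required inequalities. The only cosmetic difference is that you explicitly dispose of the case $\nu(X)=\infty$ and spell out the monotonicity of the $\nu$-geodesic condition, both of which the paper leaves implicit.
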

\begin{proof}
 Let $x,y\in X$ be two points and $r,s\geq 0$ with $r+s=d(x,y)$. For every natural number $n\in \mathbb{N}$ the space $X$ is $\nu_n$-geodesic, where $\nu_n=\nu(X)+\frac1n$, and hence there exists a point $z_n\in X$ with $d(x,z_n)\leq r+\nu_n$ and $d(y,z_n)\leq s+\nu_n$. This sequence is contained in the closed ball of radius $r+\nu(X)+1$ centered at $x$, which is compact by assumption. Hence, there exists a convergent subsequence $z_{n_k}\to z$. The limit point $z\in X$ satisfies~$d(x,z)\leq r+\nu(X)$ and $d(y,z)\leq s+\nu(X)$. Therefore, $X$ is~$\nu(X)$-geodesic.
\end{proof}

We now show that the geodesic defect, like the hyperbolicity, is a Gromov--Hausdorff stable quantity.
\begin{proposition}
\label{geode-defect-stability}
Let $X$ and $Y$ be metric spaces and $s>\dGH{X}{Y}$.
If $X$ is $\nu$-geodesic, then $Y$ is $(\nu+3s)$-geodesic.
Hence, $|\nu(X)-\nu(Y)| \leq 3\dGH{X}{Y}$.
\end{proposition}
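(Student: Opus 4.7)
The plan is to use a correspondence realizing the Gromov--Hausdorff distance to transfer the almost-midpoint construction from $X$ to $Y$. The main obstacle will be finding the right way to split $d(x,x')$ on the $X$-side so that the constant comes out to $3s$ rather than the naive $4s$ one would get by matching splittings verbatim.

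First, since $s>\dGH{X}{Y}$, I would fix a correspondence $C\subseteq X\times Y$ with distortion $\dis(C)<2s$. Given $y,y'\in Y$ and a splitting $d(y,y')=r+q$ with $r,q\geq 0$, pick partners $(x,y),(x',y')\in C$; by the distortion bound, $d(x,x')\leq d(y,y')+2s=r+q+2s$.

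The critical step is to choose a splitting $d(x,x')=r'+q'$ satisfying both $r'\leq r+s$ and $q'\leq q+s$ (the naive choice $r'=r$, $q'=d(x,x')-r$ would distribute the error unevenly and cost $4s$). Setting $r'=\min(d(x,x'),r+s)$ and $q'=d(x,x')-r'$ works: if $d(x,x')\leq r+s$, then $r'=d(x,x')$ and $q'=0$; otherwise $r'=r+s$ and $q'=d(x,x')-r-s\leq q+s$ using the bound on $d(x,x')$. Applying the $\nu$-geodesic property of $X$ to the pair $x,x'$ with this splitting then yields $z\in X$ with $d(x,z)\leq r+s+\nu$ and $d(x',z)\leq q+s+\nu$.

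Finally, I would pick any $z'\in Y$ with $(z,z')\in C$. The distortion bound gives $d(y,z')\leq d(x,z)+2s\leq r+\nu+3s$ and $d(y',z')\leq d(x',z)+2s\leq q+\nu+3s$, proving that $Y$ is $(\nu+3s)$-geodesic. Taking the infimum over $s>\dGH{X}{Y}$ yields $\nu(Y)\leq\nu(X)+3\dGH{X}{Y}$, and swapping the roles of $X$ and $Y$ gives the symmetric bound $|\nu(X)-\nu(Y)|\leq 3\dGH{X}{Y}$.
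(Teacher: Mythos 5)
Your proposal is correct and follows essentially the same route as the paper: fix a correspondence with distortion at most $2s$, transfer the splitting to $X$, apply the $\nu$-geodesic property there, and pull the resulting almost-midpoint back through the correspondence, paying $2s$ once and $s$ on the splitting for a total of $3s$. The only difference is cosmetic: the paper adds $\dis C/2\leq s$ to each side of the splitting and implicitly uses that the $\nu$-geodesic condition is monotone in the two bounds, whereas you make the adjusted splitting $r'=\min(d(x,x'),r+s)$ explicit — a point the paper glosses over and you handle cleanly.
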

\begin{proof}
Let $C$ be a correspondence between $X$ and $Y$ with $\dis C \leq 2s$. Furthermore, let~$y,y'\in Y$ be any two points, and let $r,t\geq 0$ be such that $r+t=d(y,y')$. Choose two corresponding points $x,x'\in X$, with $(x,y),(x',y')\in C$. For $u=r+\dis C/2$ and $w=t+\dis C/2$, we have $u+w\geq d(x,x')$. As $X$ is $\nu$-geodesic, there exists a point $z\in X$ such that $d(x,z)\leq u+\nu$ and $d(x',z)\leq w+\nu$. We can choose a corresponding point $p\in Y$, with~$(z,p)\in C$. For this point, we get
 \[d(y,p)\leq d(x,z)+\dis C\leq (u+\nu)  +2s= (r+\dis C/2)+\nu  +2s \leq r+\nu+{3}s\]
 and similarly $d(y',p)\leq t+\nu+{3}s$. Thus $Y$ is $(\nu+3s)$-geodesic.
\end{proof}
If $X$ is an $r$-dense subset of a geodesic space $Y$, then $\dGH{X}{Y}\leq r$, and the above proposition implies that $X$ is $(3s)$-geodesic for every $s>r$. In particular, $\nu(X)\leq 3r$. Note however that in this case  \cref{subset-geodesic-small-defect} gives the stronger bound $\nu(X)\leq r$.

The persistent homology of the Vietoris--Rips filtration is commonly used in topological data analysis as a stable multi-scale signature of a finite metric space.
For length spaces, it is known that all structure maps in the first persistent homology of the Vietoris--Rips filtration are surjective \cite[Corollary 6.2]{MR3275299}. This statement generalizes to arbitrary metric spaces using the geodesic defect as follows.
\begin{proposition}
Let $X$ be a $\nu$-geodesic metric space. Then for every $2\nu < t< u$ the canonical map $H_1(\VR{t}{X})\to H_1(\VR{u}{X})$ is surjective.
\end{proposition}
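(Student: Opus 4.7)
The plan is to show that every $1$-cycle in $\VR{u}{X}$ is homologous, within $\VR{u}{X}$, to a $1$-cycle supported on edges of length at most $t$. The key geometric input is the $\nu$-geodesic property applied to midpoints: for any edge $\{x,y\}$ with $L=d(x,y)$, there exists $z\in X$ with $d(x,z)\leq L/2+\nu$ and $d(y,z)\leq L/2+\nu$. Since $2\nu<t\leq u$ gives $\nu<u/2$, whenever $L\leq u$ the triangle $\{x,y,z\}$ has diameter at most $\max(L,L/2+\nu)\leq u$, so it is a $2$-simplex of $\VR{u}{X}$.

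First I would fix a $1$-cycle $c=\sum_i a_i[x_i,y_i]\in Z_1(\VR{u}{X})$ and subdivide each edge using such a midpoint. The boundary identity
\[
\partial[x,y,z]=[y,z]-[x,z]+[x,y]
\]
shows that $[x,y]$ is homologous in $\VR{u}{X}$ to $[x,z]+[z,y]$. Applying this replacement to every edge of $c$ simultaneously yields a new $1$-cycle $c_1$ in $\VR{u}{X}$, homologous to $c$, whose edges all have length at most $L_0/2+\nu$, where $L_0\leq u$ denotes the maximum edge length of $c$.

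Iterating the construction $n$ times produces cycles $c_1,c_2,\dots,c_n$, each homologous to $c$ in $\VR{u}{X}$, whose maximum edge length satisfies
\[
L_n\leq \frac{u}{2^n}+\nu\sum_{k=0}^{n-1}\frac{1}{2^k}= \frac{u}{2^n}+\nu\bigl(2-2^{-(n-1)}\bigr),
\]
which tends to $2\nu$ as $n\to\infty$. Since $2\nu<t$, for sufficiently large $n$ we have $L_n\leq t$, so $c_n$ lies in $\VR{t}{X}$ and maps to the homology class of $c$ under the inclusion-induced map. This proves surjectivity.

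The only delicate point is bookkeeping: ensuring at every stage of the iteration that the filling triangles $\{x,y,z\}$ actually belong to $\VR{u}{X}$, so the homological replacements are legitimate, and that the recursion converges strictly below~$t$. Both reduce to the single inequality $\nu<u/2$, which is immediate from the assumption $2\nu<t\leq u$, so no further analytic work is needed.
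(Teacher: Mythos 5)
Your proof is correct and follows essentially the same route as the paper's: both subdivide each edge using an approximate midpoint supplied by the $\nu$-geodesic property and iterate until all edge lengths drop below $t$. The only (immaterial) difference is the termination argument --- the paper shows each iteration shortens the longest edge by at least the fixed amount $\tfrac12 t-\nu>0$, while you track the recursion $L_n\leq L_{n-1}/2+\nu$ converging to $2\nu<t$.
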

\begin{proof}
Let $\{x,y\}\in \VR{u}{X}$ be an edge with length $d(x,y)=u$. As $X$ is $\nu$-geodesic, there exists a point $z\in X$ with $d(x,z)\leq \frac12 u+\nu$ and $d(y,z)\leq \frac12 u+\nu$. By assumption, we have $\frac12 u+\nu =\frac12 u+\frac12 t-(\frac12 t-\nu)<u-l$, where $l=(\frac12 t-\nu)$.  Hence, the simplex $\{z,x,y\}$ is contained in $\VR{u}{X}$ and the simplicial chain $[x,y]$ is homologous to $[z,y]-[z,x]\in C_1(\VR{u-l}{X})$. As every simplicial chain in $C_1(\VR{u}{X})$ is a finite sum of edges and $l>0$ is a constant, it follows that finitely many reapplications of the argument above yields that this chain is homologous to a chain in $C_1(\VR{t}{X})$, proving the claim.
\end{proof}
\end{arxiv}

\subsection{Rips' Contractibility Lemma via the injective hull}

In this section, we recall some known facts about embeddings of metric spaces into their injective hull.
We adapt these results using the geodesic defect to prove a version of the Contractibility Lemma for finite $\delta$-hyperbolic $\nu$-geodesic metric spaces, following \cite{LimMemoliEtAl2020}.

Let $Y$ be a metric space.
The \emph{\v{C}ech complex} of a subspace $X\subseteq Y$ for radius $r > 0$ is the nerve of the collection of closed balls in $Y$ with radius $r$ centered at points in $X$:
\[\cech_{r}(X,Y)=\{\emptyset\neq S\subseteq X\mid S\text{ finite, }\bigcap_{x\in S} \CB{r}{x}\neq\emptyset\},\]
 where $\CB{r}{x}=\{y\in Y\mid d(x,y)\leq r\}$ denotes the closed ball in $Y$ of radius $r$ centered at $x$.

 A metric space is \emph{hyperconvex} \cite{MR1904284} if it is geodesic and if any collection of closed balls has the Helly property, i.e., if any two of these balls have a nonempty intersection, then all balls have a nonempty intersection. The following lemma is a direct consequence of this definition.
\begin{lemma}
\label{rips-as-cech}
If $Y$ is hyperconvex and $X\subseteq Y$ is a subspace, then $\cech_{r}(X,Y)=\VR{2r}{X}$.
\end{lemma}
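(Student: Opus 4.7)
The plan is to prove the two inclusions $\cech_r(X,Y)\subseteq \VR{2r}{X}$ and $\VR{2r}{X}\subseteq \cech_r(X,Y)$ separately, exploiting the two defining features of hyperconvexity (being geodesic and the Helly property) in the respective directions.

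For the forward inclusion, I would take a simplex $S\in \cech_r(X,Y)$ and pick some $y\in \bigcap_{x\in S}\CB{r}{x}$. For any two vertices $x,x'\in S$, the triangle inequality gives $d(x,x')\leq d(x,y)+d(y,x')\leq r+r=2r$, so $\diam S\leq 2r$ and $S\in \VR{2r}{X}$. This direction is essentially a one-line triangle inequality argument and needs nothing beyond the nerve definition.

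For the reverse inclusion, I would take $S\in \VR{2r}{X}$ and show that $\bigcap_{x\in S}\CB{r}{x}\neq \emptyset$ via the Helly property. The key step is to first verify that the balls intersect pairwise: given $x,x'\in S$, we have $d(x,x')\leq 2r$, and since $Y$ is geodesic I can pick an isometric embedding of $[0,d(x,x')]$ from $x$ to $x'$ and take its midpoint $m$, which satisfies $d(x,m),d(x',m)\leq r$, so $m\in \CB{r}{x}\cap \CB{r}{x'}$. Once pairwise intersection is established, the Helly property of the collection of closed balls in the hyperconvex space $Y$ upgrades this to a nonempty total intersection $\bigcap_{x\in S}\CB{r}{x}\neq \emptyset$, giving $S\in \cech_r(X,Y)$.

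There is no real obstacle here; the proof is a direct unpacking of the definition of hyperconvex space. The only subtle point is that the Helly property as stated applies to any collection of balls, and $S$ is assumed finite in the definition of the \v{C}ech complex, so no compactness or cardinality issue arises. The lemma thus follows immediately from combining the midpoint construction with the Helly property.
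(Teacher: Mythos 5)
Your proof is correct and matches the paper's intent: the paper gives no written proof, stating only that the lemma ``is a direct consequence of this definition,'' and your argument (triangle inequality for one inclusion, geodesic midpoint plus the Helly property for the other) is precisely the standard unpacking of the stated definition of hyperconvexity.
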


Let $X$ be a metric space. We describe its \emph{injective hull} $E(X)$, following Lang~\cite{lang}.
A function $f\colon X\to\mathbb{R}$ with $f(x)+f(y)\geq d(x,y)$ for all $x,y\in X$ is \emph{extremal} if $f(x) = \sup_{y \in X} (d(x,y)-f(y))$ for every $x\in X$.
The difference between any two extremal functions turns out to be bounded, and so we can equip the set $E(X)$ of extremal functions with the metric induced by the supremum norm, i.e., $d(f,g)=\sup_{x\in X}|f(x)-g(x)|$.
We define an isometric embedding~$e\colon X\to E(X)$ by~$y\mapsto d_y$, where~$d_y(x)=d(y,x)$.

\begin{remark}
\label{ex-properties}
$E(X)$ is a hyperconvex space.
In particular, $E(X)$ is contractible,
and nonempty intersections of closed metric balls are contractible \cite{lang,MR182949}. Moreover, nonempty intersections of open metric balls are also contractible \cite[Proposition 2.8 and Lemma 2.15]{LimMemoliEtAl2020}.
\end{remark}

The following theorem is essentially due to Lang \cite{lang}.
Originally, it has been stated for a special case, but the proof applies verbatim to the below statement involving the geodesic defect, which indeed provided the motivation for our definition. Note that the definition of $\delta$-hyperbolic used in \cite{lang} differs from the one used here by a factor of $2$.
\begin{proposition}[{Lang \cite[Proposition 1.3]{lang}}]
\label{dense-embedding-injective-hull}
Let $X$ be a $\delta$-hyperbolic $\nu$-geodesic metric space. Then the injective hull $E(X)$ is $\delta$-hyperbolic, and every point in $E(X)$ has distance at most $2\delta +\nu$ to $e(X)$.
\end{proposition}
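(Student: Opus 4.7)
The plan is to adapt Lang's original argument to the $\nu$-geodesic setting; the two assertions---hyperbolicity of $E(X)$ and density of $e(X)$---can be handled independently.

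For the hyperbolicity of $E(X)$, I would first derive the dual formula
\[
d(f,g) = \sup_{x,y\in X}\bigl(d(x,y)-f(x)-g(y)\bigr)
\]
for any pair of extremal functions $f,g\in E(X)$, which follows directly from $f(x)=\sup_y(d(x,y)-f(y))$. Given four extremal functions $f_1,\dots,f_4$, I would then apply the four-point condition~\eqref{gromov-hyperbolic-alternative} to an arbitrary quadruple $(x_1,\dots,x_4)\in X^4$, subtract $\sum_i f_i(x_i)$ from both sides, and bound each of the two expressions inside the maximum on the right by the corresponding pairwise distance in $E(X)$ via the dual formula. Taking the supremum over the quadruple yields the four-point condition for $f_1,\dots,f_4$ with the same constant $2\delta$.

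For the density claim, fix $f\in E(X)$ and $\epsilon>0$, choose any $x\in X$, and use extremality of $f$ to find $y\in X$ with $f(x)+f(y)\leq d(x,y)+\epsilon$. Set $r=f(x)$ and $s=d(x,y)-f(x)$; assuming $s\geq 0$ (the degenerate case $f(x)>d(x,y)$ forces $f(y)<\epsilon$ and is handled by taking $z=y$ directly), the $\nu$-geodesic property supplies $z\in X$ with $d(x,z)\leq r+\nu$ and $d(y,z)\leq s+\nu$. I claim $d(e(z),f)\leq 2\delta+\nu+\epsilon$. To bound $d(z,w)-f(w)$ for each $w\in X$, I would apply the four-point condition to $\{x,y,z,w\}$,
\[
d(x,y)+d(z,w)\leq\max\{d(x,z)+d(y,w),\,d(x,w)+d(y,z)\}+2\delta,
\]
and carry out a short case analysis using the bounds on $d(x,z)$ and $d(y,z)$, the admissibility inequalities $f(w)+f(x)\geq d(x,w)$ and $f(w)+f(y)\geq d(y,w)$, and $d(x,y)\geq f(x)+f(y)-\epsilon$. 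For the reverse bound $f(w)-d(z,w)\leq 2\delta+\nu+\epsilon$, I would use extremality of $f$ once more to pick $v\in X$ with $f(w)\leq d(w,v)-f(v)+\epsilon'$, apply the forward bound at $v$ to obtain $f(v)\geq d(z,v)-2\delta-\nu-\epsilon$, and conclude via the triangle inequality as $\epsilon'\to 0$.

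The main obstacle is achieving the tight constant $2\delta+\nu$ rather than $2\delta+2\nu$: the $\nu$ correction must appear in only one of the two summands ($d(x,z)$ or $d(y,z)$) in each branch of the four-point condition, so that its contribution does not double. Once this case analysis is carried out, letting $\epsilon\to 0$ completes both directions and establishes the density claim.
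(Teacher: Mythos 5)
Your proposal is correct and takes essentially the same route as the paper, which offers no independent proof but states that Lang's argument for \cite[Proposition 1.3]{lang} applies verbatim once the geodesic defect replaces his discreteness hypothesis. Your reconstruction --- the dual formula $d(f,g)=\sup_{x,y}(d(x,y)-f(x)-g(y))$ for extremal functions, the transfer of the four-point condition to $E(X)$ by taking suprema, and the case analysis in which the single $\nu$-correction enters only one summand of each branch so that the constant $2\delta+\nu$ is not degraded to $2\delta+2\nu$ --- is precisely that argument, and the residual $\epsilon$'s disappear because distance to the subset $e(X)$ is an infimum.
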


Now we prove a generalization of the Contractibility Lemma using the injective hull analogously to the proof for geodesic spaces in \cite[Corollary 8.4]{LimMemoliEtAl2020}.
\begin{theorem}
 Let $X$ be a finite $\delta$-hyperbolic $\nu$-geodesic metric space. Then the complex~$\VR{t}{X}$ is contractible for every $t \geq 4\delta+2\nu$.
\end{theorem}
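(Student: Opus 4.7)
The approach follows the strategy of the proof for geodesic spaces in \cite[Corollary 8.4]{LimMemoliEtAl2020}, realizing the Vietoris--Rips complex as a \v{C}ech nerve on the injective hull $E(X)$ and then invoking the Nerve Lemma. Setting $r = t/2 \geq 2\delta + \nu$, the isometric embedding $e\colon X \to E(X)$ combined with \cref{rips-as-cech} (applicable since $E(X)$ is hyperconvex by \cref{ex-properties}) gives
\[
\VR{t}{X} \;=\; \VR{2r}{e(X)} \;=\; \cech_r(e(X), E(X)).
\]
By \cref{dense-embedding-injective-hull}, $e(X)$ is $(2\delta+\nu)$-dense in $E(X)$, and since $r \geq 2\delta+\nu$ the closed balls $\{\CB{r}{e(x)}\}_{x\in X}$ cover $E(X)$. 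Because $E(X)$ is contractible (being hyperconvex, \cref{ex-properties}), showing that this \v{C}ech complex is homotopy equivalent to $E(X)$ will conclude the argument.

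The main obstacle is that standard versions of the Nerve Lemma are stated for good \emph{open} covers, whereas the cover at hand consists of closed balls. I would resolve this by enlarging each ball slightly: for a sufficiently small $\epsilon > 0$, the open balls of radius $r + \epsilon$ around the points $e(x)$ form a good open cover of $E(X)$ whose nerve coincides with $\cech_r(e(X), E(X))$. To see that such an $\epsilon$ exists, suppose $\bigcap_{x \in S}\CB{r}{e(x)} = \emptyset$ for some $S \subseteq X$; the Helly property of closed balls in the hyperconvex space $E(X)$ then forces some pair of these balls to be disjoint, so the corresponding center distance strictly exceeds $2r$. Because $X$ is finite, only finitely many such distances occur, and a single $\epsilon > 0$ is enough to ensure that this pair of open balls of radius $r + \epsilon$ remains disjoint, so the enlarged intersection over $S$ is also empty; the reverse inclusion is automatic from $\CB{r}{e(x)} \subseteq B_{r+\epsilon}(e(x))$.

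Nonempty intersections of open balls in $E(X)$ are contractible by \cref{ex-properties}, so this is indeed a good open cover of the paracompact space $E(X)$. The Nerve Lemma therefore produces a homotopy equivalence between $\cech_r(e(X), E(X))$ and $E(X)$, and contractibility of $\VR{t}{X}$ follows. The only delicate step is the comparison of the closed-ball and open-ball nerves, which is precisely where finiteness of $X$ enters; everything else is a direct assembly of the results collected earlier in this section.
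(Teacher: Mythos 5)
Your proposal is correct and follows essentially the same route as the paper: identify $\VR{t}{X}$ with the \v{C}ech complex of balls in the injective hull via hyperconvexity, use Lang's density bound $2\delta+\nu$ to get a cover, and apply the Nerve Lemma after passing to slightly enlarged open balls, with finiteness of $X$ guaranteeing that the enlargement does not change the nerve. Your Helly-property argument for the open/closed ball comparison is in fact a more explicit justification of the step the paper dispatches with ``by finiteness of $X$.''
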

\begin{proof}
By \cref{dense-embedding-injective-hull}, we know that for $r > \frac t2 \geq 2\delta+\nu$ the collection of open balls with radius $r$ centered at the points in $e(X)$ covers $E(X)$. By finiteness of~$X$, there exists an $r>\frac t2$ such that the nerve of this cover is isomorphic to $\cech_{\frac t2}(e(X),E(X))$.
As $e$ is an isometric embedding, \cref{rips-as-cech}, \cref{ex-properties}, and the Nerve Theorem \cite[Section 4.G]{MR1867354} imply
\[
\VR{t}{X}=\VR{t}{e(X)}=\cech_{\frac t2}(e(X),E(X))\simeq E(X)\simeq *. \qedhere\]
\end{proof}

\section{Filtered collapsibility of Vietoris--Rips complexes}

In this section, we revisit the original proof of the Contractibility Lemma in \cite{Gromov1987}, adapted to the language of discrete Morse theory \cite{MR1612391}.
Focusing on the finite case, which also constitutes the key part of the original proof, we extend the statement beyond geodesic spaces using the geodesic defect, strengthen the assertion of contractibility to collapsibility, and further extend the result to become compatible with the Vietoris--Rips filtration.

\begin{theorem}
\label{discrete-rips-collapse}
Let $X$ be a finite $\delta$-hyperbolic $\nu$-geodesic metric space.
Then for every $t\geq 4\delta+2\nu$ there exists a discrete gradient that induces a collapse $\VR{t}{X}\searrow\{*\}$.
\end{theorem}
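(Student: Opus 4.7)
The plan is to exhibit a discrete gradient on $\VR{t}{X}$ via a ``Rips center'' pairing, adapting Gromov's original argument so that the $\nu$-geodesic property takes the role of an exact geodesic. The discrete gradient will have $\{p\}$ as its unique critical simplex for a chosen basepoint $p$, so \cref{collapsing-theorem} will immediately give the desired collapse.

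Concretely, fix a basepoint $p \in X$ and a total order on $X$. For each $v \in X \setminus \{p\}$, apply the $\nu$-geodesic hypothesis with $r = \max(d(p,v) - t/2,\, 0)$ and $s = d(p,v) - r$ to produce a point $c(v) \in X$ satisfying $d(v, c(v)) \leq s + \nu \leq t/2 + \nu$ and $d(p, c(v)) \leq r + \nu$; when $d(p,v) \leq t/2$, take $c(v) = p$ directly. The hypothesis $t \geq 4\delta + 2\nu$ ensures $d(p, c(v)) \leq d(p, v)$, and tie-breaking by the total order guarantees that the farthest-vertex assignment defined below is unchanged by adding or removing the vertex $c(v)$. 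Now, for each simplex $\sigma \neq \{p\}$ in $\VR{t}{X}$, let $v(\sigma)$ be the vertex of $\sigma$ farthest from $p$ (smallest in the total order in case of ties), and set $c(\sigma) := c(v(\sigma))$. Pair each such $\sigma$ with $\sigma \mathbin{\triangle} \{c(\sigma)\}$.

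The crucial step, and the main obstacle, is to verify that $\sigma \cup \{c(\sigma)\}$ still has diameter at most $t$, i.e., $d(x, c(\sigma)) \leq t$ for every $x \in \sigma$. Writing $v = v(\sigma)$ and $c = c(\sigma)$, apply the 4-point condition \eqref{gromov-hyperbolic-alternative} to the quadruple $(p, v, x, c)$: using $d(p, x) \leq d(p, v)$, $d(x, v) \leq t$, $d(v, c) \leq t/2 + \nu$, and $d(p, c) \leq d(p, v) - t/2 + \nu$, both arguments of the maximum on the right-hand side are bounded by $d(p, v) + t/2 + \nu$. This yields $d(x, c) \leq t/2 + \nu + 2\delta \leq t$, where the final inequality is precisely the standing hypothesis $t \geq 4\delta + 2\nu$. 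The degenerate subcase $d(p, v) \leq t/2$ with $c = p$ is handled directly by the triangle inequality, since then $d(x, c) = d(x, p) \leq d(p, v) \leq t/2 \leq t$.

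It remains to check consistency of the pairing, $c(\sigma \mathbin{\triangle} \{c(\sigma)\}) = c(\sigma)$, which follows because $c(\sigma)$ depends only on $v(\sigma)$, and $v(\sigma)$ is by construction unaffected by adding or removing $c(\sigma)$. Hence the proposed pairs form a valid discrete gradient on $\VR{t}{X}$, and the only simplex it leaves unpaired is $\{p\}$ (for which $v(\sigma) = p$ and $c(\sigma)$ is undefined). Applying \cref{collapsing-theorem} with $L = \{p\}$ produces the asserted collapse $\VR{t}{X} \searrow \{*\}$.
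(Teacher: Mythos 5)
Your choice of the coning point $c(v)$ via the $\nu$-geodesic property applied to the pair $p,v$, and the four-point estimate giving $d(x,c)\le t/2+\nu+2\delta\le t$, are exactly the geometric heart of the paper's argument, so that part is fine. The genuine gap is the final assertion that ``the proposed pairs form a valid discrete gradient.'' What you actually verify is only that $\sigma\mapsto\sigma\mathbin{\triangle}\{c(\sigma)\}$ is a well-defined involution, i.e.\ a perfect facet--cofacet matching on $\VR{t}{X}\setminus\{\{p\}\}$. That is strictly weaker than being a discrete gradient: by definition a discrete gradient is the set of regular intervals of a discrete Morse function, equivalently an \emph{acyclic} matching, and an arbitrary facet--cofacet matching with a single unmatched cell can contain directed cycles, in which case \cref{collapsing-theorem} does not apply and no collapse need follow. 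You must either exhibit a discrete Morse function whose gradient is your matching or prove acyclicity directly. This is precisely what the paper's structure supplies: it orders the vertices by distance to $p$, filters $\VR{t}{X}$ by the subcomplexes $K_i=\VR{t}{X_i}$, observes that each difference $K_i\setminus K_{i-1}$ is coned off by a single point (which is manifestly a gradient on that piece), and glues these with \cref{union-of-gradients}. Your global matching admits the same repair --- the quantity $\max_{x\in\sigma}d(p,x)$ is non-increasing along any gradient path because $d(p,c(v))<d(p,v)$, and on each ``layer'' the matching is a cone matching --- but this argument is missing and is not a formality.

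A second, smaller gap: for $v(\sigma)$ to be unaffected by inserting or deleting $c(v)$ you need $d(p,c(v))$ to be \emph{strictly} smaller than $d(p,v)$ (or a tie-break that provably favors $v$). Your bound $d(p,c(v))\le d(p,v)-t/2+\nu$ permits equality when $\delta=0$ and $t=2\nu$, and your total order is arbitrary, so the claim that ``tie-breaking by the total order guarantees'' the farthest-vertex assignment is unchanged is not justified; if $c(v)$ happens to precede $v$ in the order and realizes the same distance to $p$, the involution breaks. The paper avoids this by first reducing to $\delta>0$ (a $0$-hyperbolic space is $\epsilon$-hyperbolic for every $\epsilon>0$, and $\VR{4\epsilon+2\nu}{X}=\VR{2\nu}{X}$ for small $\epsilon$), after which $d(p,c(v))\le d(p,v)-2\delta<d(p,v)$; you should add the same reduction or constrain the order.
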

\begin{proof}
Without loss of generality, assume that $\delta>0$;
if $X$ is $0$-hyperbolic, then it is also $\epsilon$-hyperbolic for any $\epsilon>0$, and for sufficiently small $\epsilon>0$ we have $\VR{4\epsilon+2\nu}{X}=\VR{2\nu}{X}$.

Choose a reference point $p\in X$ and order the points according to their distance to $p$, choosing a total order $p=x_1<\dots<x_n$ on $X$ such that $x_i<x_j$ implies $d(x_i,p)\leq d(x_j,p)$.
Let $t\geq 4\delta+2\nu$ and consider the filtration
\[\{p\}= K_1\subseteq\cdots\subseteq K_{n}=\VR{t}{X},\]
where $K_i = \VR{t}{X_i}$ for $X_i := \{x_1,\dots,x_i\}$.
We prove that for $i\in\{2,\dots,n\}$ there exists a discrete gradient $V_i$ on $K_i$ inducing a collapse $K_{i}\searrow K_{i-1}$.

First assume %
$d(x_i,p)< t$.
Then for any vertex $x_k$ of $K_i$ we have $k \leq i$ and
$d(x_k,p) \leq d(x_i,p) < t$,
so $K_i$ is a simplicial cone with apex $p$.
Pairing the simplices containing $p$ with those not containing $p$, we obtain a discrete gradient inducing a collapse $K_i\searrow K_{i-1}$:
\[V_i = \{(\sigma \setminus \{p\},\sigma\cup\{p\}) \mid \sigma \in K_i\setminus K_{i-1} \}.\]

Now assume %
$d(x_i,p)\geq t$.
We show that there exists a point $z\in X_{i-1}$ such that for every simplex $\sigma\in K_i\setminus K_{i-1}$, the union $\sigma\cup\{z\}$ is also a simplex in~$K_{i}\setminus K_{i-1}$.
To this end, we show that any vertex $y$ of $\sigma$ has distance $d(y,z) \leq t$ to $z$.
For $r=d(x_i,p)-2\delta-\nu$ and $s=2\delta+\nu$ we have $r+s=d(x_i,p)$, and therefore, by the assumption that $X$ is a $\nu$-geodesic space, there exists a point $z\in X$ with $d(z,p)\leq r+\nu = d(x_i,p)-2\delta$, implying $z< x_i$, and $d(z,x_i)\leq s+\nu = 2\delta+2\nu$.
By assumption $t \geq 4\delta+2\nu$, and thus we get $d(z,x_i) \leq t-2\delta$.
Note that $y \in X_i$ implies $d(y,p)\leq d(x_i,p)$, and $y,x_i \in \sigma$ implies $d(y,x_i)\leq \diam{\sigma}\leq t$. The four-point condition \eqref{gromov-hyperbolic-alternative} now yields
\begin{align}
\label{estimate-proof-contractibility-lemma}
d(y,z)&\leq \max\{d(y,x_i)+d(z,p),\,d(y,p)+d(z,x_i) \}+2\delta-d(x_i,p)\nonumber\\
&= \max\{\underbrace{d(y,x_i)}_{\leq t}+\underbrace{d(z,p)-d(x_i,p)}_{\leq-2\delta},\,\underbrace{d(y,p)-d(x_i,p)}_{\leq 0}+\underbrace{d(z,x_i)}_{\leq t-2\delta} \} +2\delta\leq t.
\end{align}
Similarly to the above, pairing the simplices containing $z$ with those not containing $z$ yields a discrete gradient inducing a collapse $K_i\searrow K_{i-1}$:
\[V_i = \{(\sigma \setminus \{z\},\sigma\cup\{z\}) \mid \sigma \in K_i\setminus K_{i-1} \}.\]

Finally, by \cref{union-of-gradients}, the union $V=\bigcup_i V_i$ is a discrete gradient on $\VR{t}{X}$ and by \cref{collapsing-theorem} it induces a collapse~$\VR{t}{X}\searrow \{p\}$.
\end{proof}
 \begin{remark}
 \label{remark-strong-collapse}
For a simplicial complex $K$, a particular type of simplicial collapse called an \emph{elementary strong collapse} from $K$ to $K\setminus \St{v}$ is defined in \cite{barmak-minian} for the case where the link of the vertex~$v$ is a simplicial cone. The proof of \cref{discrete-rips-collapse} actually shows that for~$t\geq 4\delta+2\nu$ there exists a sequence of elementary strong collapses from $\VR{t}{X}$ to $\{*\}$.
 \end{remark}

We can now extend the proof strategy of \cref{discrete-rips-collapse} to obtain a filtration-compatible strengthening of the Contractibility Lemma.
\begin{proof}[Proof of \cref{discrete-rips-fitlered-collapse}]
As in the proof of \cref{discrete-rips-collapse}, we can assume that $\delta>0$, and order the points in $X$ according to their distance to a chosen reference point $p=x_1<\dots<x_n$.

As $X$ is finite, we can enumerate the values of pairwise distances by $0=r_0<\dots<r_l$.
For every $r_m>4\delta+2\nu$ we construct a discrete gradient $W_m$ inducing a collapse $\VR{r_m}{X}\searrow\VR{r_{m-1}}{X}$.
This will prove the theorem, because it follows from \cref{discrete-rips-collapse} that there exists a discrete gradient $V$ that induces a collapse $\VR{4\delta+2\nu}{X}\searrow\{*\}$, and an application of \cref{union-of-gradients} assembles these gradients into a single gradient $W=V\cup \bigcup_m W_m$on $\cl{(X)}$ inducing collapses
 $\VR{u}{X}\searrow \VR{t}{X} \searrow \{*\}$ for every~$u> t\geq 4\delta+2\nu$.

Let $m$ be arbitrary such that $r_m>4\delta+2\nu$. Consider the filtration
\[\VR{r_{m-1}}{X}=K_1\subseteq\cdots\subseteq K_{n}=\VR{r_{m}}{X},\]
where $K_i = \VR{r_{m-1}}{X} \cup \VR{r_{m}}{X_i}$ for $X_i := \{x_1,\dots,x_i\}$.
We prove that for $i\in\{2,\dots,n\}$ there exists a discrete gradient~$V_i$ on $K_i$ inducing a collapse $K_i\searrow K_{i-1}$. Note that $K_i\setminus K_{i-1}$ consists of all simplices of diameter $r_m$ that contain $x_i$ as the maximal vertex.

First assume %
$d(x_i,p)< r_m$.
Let $\sigma\in K_{i} \setminus K_{i-1}$.
As $x_i$ is the maximal vertex of $\sigma$, we have $d(v,p)\leq d(x_i,p)< r_m$ for all $v\in\sigma$.
Since $\sigma$ has diameter~$%
r_m$%
, this implies that $\sigma\cup\{p\}$ also has diameter $r_m$%
. Moreover, this implies that there exists an edge~$e\subseteq \sigma\setminus \{p\}\subseteq \sigma$ not containing $p$ with $\diam e=r_m$. Therefore, $\sigma\setminus \{p\}$ also has diameter~$r_m$. As $p<x_i$, both simplices $\sigma\setminus \{p\}$ and $\sigma\cup \{p\}$ contain $x_i$ as the maximal vertex and are thus contained in
$K_{i} \setminus K_{i-1}$.
Pairing the simplices containing $p$ with those not containing $p$, we obtain a discrete gradient inducing a collapse $K_i\searrow K_{i-1}$:
\[V_i = \{(\sigma \setminus \{p\},\sigma\cup\{p\}) \mid \sigma \in K_i\setminus K_{i-1} \}.\]
 
Now assume %
$d(x_i,p)\geq r_m$.
We show that there exists a point $z\in X_{i-1}$ such that for every simplex $\sigma\in K_{i}\setminus K_{i-1}$, the simplices $\sigma\setminus\{z\}$ and $\sigma\cup\{z\}$ are also contained in $K_{i}\setminus K_{i-1}$.
To this end, we show first that any vertex $y$ of $\sigma$ has distance~$d(y,z) \leq r_m$ to~$z$.
As in the proof of \cref{discrete-rips-collapse}, there exists a point $z\in X$ with $d(z,p)\leq d(x_i,p)-2\delta$, implying $z < x_i$, and $d(z,x_i)\leq 2\delta+2\nu$.
By assumption $r_m>4\delta+2\nu$, and thus we get $d(z,x_i)< r_m-2\delta$.
Similar to \cref{estimate-proof-contractibility-lemma}, we have the following estimate
 \begin{align*}
 d(y,z)&\leq \max\{\underbrace{d(y,x_i)}_{\leq r_m}+\underbrace{d(z,p)-d(x_i,p)}_{\leq-2\delta},\, \underbrace{d(y,p)-d(x_i,p)}_{\leq 0}+\underbrace{d(z,x_i)}_{< r_m-2\delta} \} +2\delta\leq r_m ,
 \end{align*}
and if $d(y,x_i)<r_m$, then $d(y,z)<r_m$.
Hence, $\diam{(\sigma\cup\{z\})}=r_m$, and $\diam{\sigma}=r_m$
implies
$\diam\sigma\setminus \{z\}=r_m$, by an argument similar to the above.
As $z<x_i$, both simplices $\sigma\setminus \{z\}$ and $\sigma\cup \{z\}$ contain~$x_i$ as the maximal vertex and are thus contained in
$K_{i} \setminus K_{i-1}$.
Pairing the simplices containing~$z$ with those not containing $z$, we obtain a discrete gradient inducing a collapse~$K_i\searrow K_{i-1}$:
\[V_i = \{(\sigma \setminus \{z\},\sigma\cup\{z\}) \mid \sigma \in K_i\setminus K_{i-1} \}.\]

By \cref{union-of-gradients} the union $W_m=\bigcup V_i$ is a discrete gradient on $\VR{r_m}{X}$, and by \cref{collapsing-theorem} it induces a collapse~$\VR{r_m}{X}\searrow \VR{r_{m-1}}{X}$. 
\end{proof}

\begin{arxiv}
\end{arxiv}
\begin{arxiv}
 \begin{remark}
 We do not know whether this bound is tight for spaces with positive hyperbolicity. However, it is sharp for every finite tree metric, as can be deduced from \cref{geod-defect-tree}.
 \end{remark}
\end{arxiv}

\section{Collapsing Vietoris--Rips complexes of trees by apparent pairs}

In this section, we analyze the Vietoris--Rips filtration of a tree metric space $(V,d)$ for a positively weighted finite tree $T=(V,E)$, with the goal of proving the collapses in \cref{discrete-rips-tree-fitlered-collapse} using the apparent pairs gradient. To this end, we introduce two other discrete gradients: the \emph{canonical gradient}, which is independent of any choices, and the \emph{perturbed gradient}, which coarsens the canonical gradient and can be interpreted as a gradient that arises through a symbolic perturbation of the edge lengths.
We then show that the intervals in the perturbed gradient are refined by apparent pairs of the lexicographically refined Vietoris--Rips filtration, with respect to a particular total order on the vertices.

We write $\CB{r}{x}=\{y\in V\mid d(x,y)\leq r\}$ and $\Sph{r}{x}=\{y\in V\mid d(x,y) = r\}$.
\begin{lemma}
\label{max-simp-on-sphere}
  Let $x,y\in V$ be two distinct points at distance $d(x,y)=r$.
  Then we have~$\diam{\CB{r}{x}\cap\CB{r}{y}}=r$.
  Furthermore, if $a,b\in \CB{r}{x}\cap\CB{r}{y}$ are points with $d(a,b)=r$, then these points are contained in the union $\Sph{r}{x}\cup\Sph{r}{y}$.
\end{lemma}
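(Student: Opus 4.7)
The plan is to derive both assertions directly from the four-point condition \eqref{gromov-hyperbolic-alternative}, specialized to the fact that a tree metric space is $0$-hyperbolic (as noted in \cref{geod-defect-tree}). Given any $a,b \in \CB{r}{x}\cap\CB{r}{y}$, I will apply the four-point condition to the quadruple $x,y,a,b$ with $\delta=0$, which gives
\[ d(a,b)+d(x,y) \leq \max\{d(a,x)+d(b,y),\, d(a,y)+d(b,x)\}. \]
Since each of the four summands on the right is at most $r$ and $d(x,y)=r$, this yields $d(a,b) \leq r$, providing the upper bound on the diameter.

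For the lower bound on the diameter, I will observe that $x$ and $y$ themselves both lie in $\CB{r}{x}\cap\CB{r}{y}$ (since $d(x,y)=r$), and they realize distance $r$. Together with the upper bound this gives $\diam{\CB{r}{x}\cap\CB{r}{y}}=r$.

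For the second assertion, suppose $a,b \in \CB{r}{x}\cap\CB{r}{y}$ satisfy $d(a,b)=r$. Then the inequality above becomes the equality $2r \leq \max\{d(a,x)+d(b,y),\, d(a,y)+d(b,x)\}$, and since each term in the maximum is bounded by $2r$, one of the two sums must equal $2r$. As each individual distance to $x$ or $y$ is bounded by $r$, this forces both summands in that expression to be exactly $r$. In the first case we obtain $d(a,x)=d(b,y)=r$, so $a \in \Sph{r}{x}$ and $b \in \Sph{r}{y}$; in the second case analogously $a \in \Sph{r}{y}$ and $b \in \Sph{r}{x}$. In either case, $a,b \in \Sph{r}{x}\cup \Sph{r}{y}$, as required.

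There is no real obstacle: the whole argument reduces to the equality case analysis of the four-point condition, where $0$-hyperbolicity is precisely the tool that makes the estimate tight enough to pin the witnesses to the spheres. The only subtlety worth stating carefully is that equality in the bound on $d(a,b)$ not only saturates the maximum but also each of its two summands individually, which is what upgrades the diameter bound to a statement about the location of extremal pairs.
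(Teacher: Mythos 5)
Your proof is correct, but it takes a genuinely different route from the paper. You reduce everything to the four-point condition \eqref{gromov-hyperbolic-alternative} with $\delta=0$, using the fact (recorded in \cref{geod-defect-tree}, by citation) that tree metrics are $0$-hyperbolic: the instance $d(a,b)+d(x,y)\leq \max\{d(a,x)+d(b,y),\,d(a,y)+d(b,x)\}$ immediately gives $d(a,b)\leq r$, and the equality analysis — the maximum must be $2r$, and since each summand is at most $r$ both summands of the attaining sum equal $r$ — pins $a$ and $b$ to $\Sph{r}{x}\cup\Sph{r}{y}$. The paper instead argues directly inside the tree: it takes the unique shortest path $\gamma\colon x\rightsquigarrow y$, identifies the branch points $\gamma(t_a)$ and $\gamma(t_b)$ where the paths to $a$ and $b$ leave $\gamma$, and derives $d(a,b)\leq r$ from the explicit decomposition $a\rightsquigarrow\gamma(t_a)\rightsquigarrow\gamma(t_b)\rightsquigarrow b$, with the sphere statement coming from the equality case of that chain of inequalities. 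Your argument is shorter and is actually more general — it proves the lemma for an arbitrary $0$-hyperbolic space, with no tree structure needed — but it outsources the $0$-hyperbolicity of tree metrics to the cited characterization. The paper's proof is self-contained and keeps the path-decomposition machinery explicit, which matches the style of the later tree-specific arguments (e.g.\ \cref{point-on-path-interior} and the zig-zag analysis in \cref{collapse_apparent}); beyond that, the two proofs establish exactly the same statement and either would serve.
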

\begin{proof}
We start by showing the first claim. Let $a,b\in \CB{r}{x}\cap\CB{r}{y}$ be any two points. We show that $d(a,b)\leq  r$ holds, implying $\diam{\CB{r}{x}\cap\CB{r}{y}}\leq r$. Because $x,y\in \CB{r}{x}\cap\CB{r}{y}$ we also have $ \diam{\CB{r}{x}\cap\CB{r}{y}}\geq r$, proving equality.

 Write $[n]=\{1,\dots,n\}$ and let $\gamma\colon ([n],\{\{i,i+1\}\mid i\in [n-1]\})\to T$ be the unique shortest path $x\rightsquigarrow y$. Moreover, let $\Psi_a$ and $\Psi_b$ be the unique shortest paths $x\rightsquigarrow a$ and $x\rightsquigarrow b$, respectively. Consider the largest numbers $t_a,t_b\in [n]$ with $\gamma(t_a)=\Psi_a(t_a)$ and $\gamma(t_b)=\Psi_b(t_b)$ and assume without loss of generality $t_a\leq t_b$. Note that the unique shortest path $a\rightsquigarrow b$ is then given by the concatenation $a\rightsquigarrow \gamma(t_a)\rightsquigarrow\gamma(t_b)\rightsquigarrow b$, where $\gamma(t_a)\rightsquigarrow\gamma(t_b)$ is the restricted path $\gamma_{\mid [t_a,t_b]}$.
 By assumption, we have $d(a,y)\leq r$ and this implies the inequality
 \begin{align*}
  d(a,\gamma(t_a))+d(\gamma(t_a),y)=d(a,y)\leq r =d(x,y) =  d(x,\gamma(t_a))+d(\gamma(t_a),y),
 \end{align*}
 which is equivalent to $d(a,\gamma(t_a))\leq d(x,\gamma(t_a))$. Similarly, the assumption $d(x,b)\leq r$ implies $d(\gamma(t_b),b)\leq d(\gamma(t_b),y)$. Thus, the distance $d(a,b)$ satisfies \begin{align}
\label{inequ_path}
d(a,b)&=d(a,\gamma(t_a))+d(\gamma(t_a),\gamma(t_b))+d(\gamma(t_b),b) \nonumber \\                                                                                                                        &\leq d(x,\gamma(t_a))+d(\gamma(t_a),\gamma(t_b))+d(\gamma(t_b),y)=d(x,y)=r,                                                                                                                            \end{align}
which finishes the proof of the first claim.

We now show the second claim; assume $d(a,b)=r$. From the inequalities \eqref{inequ_path} and $d(a,\gamma(t_a))\leq d(x,\gamma(t_a))$, $d(\gamma(t_b),b)\leq d(\gamma(t_b),y)$ together with the assumption $d(a,b)=r$, we deduce the equalities $d(a,\gamma(t_a))= d(x,\gamma(t_a))$ and $d(\gamma(t_b),b)= d(\gamma(t_b),y)$. Hence, \[d(a,y)=d(a,\gamma(t_a))+d(\gamma(t_a),y)=d(x,\gamma(t_a))+d(\gamma(t_a),y)=d(x,y)=r \]
    and similarly $d(x,b)=r$, proving the second claim.
  \end{proof}

Enumerate the values of pairwise distances by $0=r_0<\dots < r_l = \diam V$.
Let $\K{m}:= \VR{r_{m-1}}{V} \cup T_{r_m}$.
We show that the complement~$\RC{m} :=\VR{r_m}{V}\setminus \K{m}$ is the set of all cofaces of non-tree edges of length $r_m$.
We further show that it is partitioned into regular intervals in the face poset, and that this constitutes a discrete gradient.

  \begin{lemma}
  \label{contained-in-max-simplex}
  Every edge $e\in \VR{r_{m}}{V}\setminus \VR{r_{m-1}}{V}$ is contained in a unique maximal simplex $\Delta_e\in \VR{r_{m}}{V}\setminus \VR{r_{m-1}}{V}$. Moreover, if $e$ is a tree edge of length $r_{m}$, then $\Delta_e=e$, and if $e\in \RC{m}$, then $\Delta_e\in \RC{m}$ and $e\subsetneq \Delta_e$.
  \end{lemma}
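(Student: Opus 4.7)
I would start by unpacking the hypothesis: writing $e=\{x,y\}$ and recalling that pairwise distances in $V$ lie in $\{r_0,\dots,r_l\}$, the condition $e\in \VR{r_m}{V}\setminus \VR{r_{m-1}}{V}$ forces $d(x,y)=r_m$. The natural candidate for the maximal simplex containing $e$ is
\[
\Delta_e \;:=\; \CB{r_m}{x}\cap \CB{r_m}{y},
\]
i.e., the full set of vertices within distance $r_m$ of both endpoints. The key input is \cref{max-simp-on-sphere}, which gives $\diam{\Delta_e}=r_m$; this shows that $\Delta_e$ is itself a simplex of $\VR{r_m}{V}$, and any simplex of $\VR{r_m}{V}$ containing $e$ automatically consists of vertices within distance $r_m$ of both $x$ and $y$, hence sits inside $\Delta_e$. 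Together these yield uniqueness and maximality, and $\diam{\Delta_e}=r_m>r_{m-1}$ ensures $\Delta_e\notin \VR{r_{m-1}}{V}$.

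For the case where $e$ is a tree edge of length $r_m$, I would invoke the standard fact that removing $e$ from $T$ disconnects it into two subtrees $T_x\ni x$ and $T_y\ni y$. Any vertex $z\in T_x\setminus\{x\}$ realizes its tree distance to $y$ along a path passing through $e$, so $d(y,z)=d(y,x)+d(x,z)=r_m+d(x,z)>r_m$ and $z\notin \CB{r_m}{y}$; the symmetric argument handles $T_y$. Hence $\Delta_e=\{x,y\}=e$.

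For the remaining case $e\in\RC{m}$, the definition $\RC{m}=\VR{r_m}{V}\setminus (\VR{r_{m-1}}{V}\cup T_{r_m})$ together with $d(x,y)=r_m$ forces $e$ not to be a tree edge, so the tree geodesic from $x$ to $y$ contains at least one intermediate vertex $w$; such a $w$ satisfies $d(x,w)+d(w,y)=r_m$, hence $w\in \Delta_e\setminus e$ and $e\subsetneq \Delta_e$. In particular $\Delta_e$ has dimension at least two and cannot be a tree edge, so $\Delta_e\notin T_{r_m}$; combined with $\Delta_e\notin \VR{r_{m-1}}{V}$ from the first paragraph, this gives $\Delta_e\in\RC{m}$.

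The only substantive step is identifying $\Delta_e$ as $\CB{r_m}{x}\cap \CB{r_m}{y}$ and invoking \cref{max-simp-on-sphere}; the remainder is just tree combinatorics, so I do not expect a serious obstacle beyond careful bookkeeping.
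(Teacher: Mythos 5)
Your proposal is correct and follows essentially the same route as the paper: identify $\Delta_e$ with $\CB{r_m}{x}\cap\CB{r_m}{y}$, use \cref{max-simp-on-sphere} to see this intersection has diameter $r_m$ and hence is the unique maximal simplex, then handle the tree-edge and non-tree-edge cases by inspecting that intersection. Your subtree-disconnection argument for the tree-edge case and the explicit check that $\Delta_e\in\RC{m}$ just spell out details the paper leaves implicit.
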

\begin{proof}
 By definition, $e$ corresponds to two points $x,y\in V$ at distance $d(x,y)=r_{m}$. 
 If $e$ is contained in the simplex $\Delta\in \VR{r_{m}}{V}$, then the points in $\Delta$ lie in the intersection $\CB{r_{m}}{x}\cap\CB{r_{m}}{y}$, which has diameter $r_{m}$ by \cref{max-simp-on-sphere}. Hence, the maximal simplex $\Delta_e$ is spanned by all the points in $\CB{r_{m}}{x}\cap\CB{r_{m}}{y}$.
 
 If $e$ is a tree edge of length $r_m$, then this intersection only contains $x$ and $y$, and hence $\Delta_e=e$. If $e\in \RC{m}$, then this intersection contains at least one vertex different from $x$ and $y$ that lies on the unique shortest path $x\rightsquigarrow y$. This implies $e\subsetneq \Delta_e$.
\end{proof}

For every maximal simplex $\Delta\in \RC{m} \subseteq \VR{r_{m}}{V}$, we write $E_\Delta$ for the set of edges~$e\in \RC{m}$ with $\Delta_e=\Delta$.
Note that $E_\Delta$ is the set of non-tree edges of length $r_{m}$ contained in~$\Delta$.

\subsection{Generic tree metrics}
\label{non-degenerate-tree-metric}
Before dealing with the general case, let us focus on the special case where the metric space~$(V,d)$ is \emph{generic}, meaning that the pairwise distances are distinct.
In this case, \cref{contained-in-max-simplex} implies that the diameter function $\diam{}\colon \cl{(V)}\to\mathbb{R}$ is a discrete Morse function, defined on the full simplicial complex on $V$,
with discrete gradient
\[\{[e,\Delta_e]\mid \text{non-tree edge }e\subseteq \cl{(V)}\},\]
which we call the \emph{generic gradient}, and only the vertices $V$ and the tree edges $E$ are critical.
\begin{arxiv}
\begin{example}
Consider the following weighted tree with vertex set $V = \{a,b,c,d\}$:
\[
\begin{tikzpicture}
 \node (A) at (0,0) {a};
 \node (B) at (1,0) {b};
 \node (C) at (2,.5) {c};
 \node (D) at (2,-.5) {d};
 \draw[-] (A) -- node[above] {\small 1} (B) ;
 \draw[-] (B) -- node[above] {\small 2} (C);
 \draw[-] (B) -- node[below] {\small 4} (D);
 \end{tikzpicture}
 \]
The generic gradient is given by $\{[\{a,c\},\{a,b,c\}],[\{a,d\},\{a,b,d\}],[\{c,d\},\{a,b,c,d\}]\}$. These intervals are the preimages under the diameter function of the non-tree distances $3$, $5$, and $6$, respectively.
 \end{example}
\end{arxiv}
Together with \cref{collapsing-theorem}, this yields the following theorem.
\begin{theorem}
\label{generic-collapse}
If the tree metric space $(V,d)$ is generic, then the generic gradient induces, for every $m\in\{1,\dots,l\}$, a sequence of collapses
\[\VR{r_{m}}{V}\searrow (\VR{r_{m-1}}{V} \cup T_{r_m}) \searrow T_{r_m} . \]
\end{theorem}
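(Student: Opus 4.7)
The plan is to apply the Collapsing Theorem (\cref{collapsing-theorem}) twice, verifying in each case that the complement of the target in the source is a union of intervals of the generic gradient.

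The key observation enabled by genericity is that every non-vertex simplex $\sigma \in \cl{(V)}$ has a unique edge $e_\sigma \subseteq \sigma$ realizing its diameter. If $e_\sigma = \{x,y\}$ happens to be a tree edge, then $\sigma \subseteq \CB{\diam{\sigma}}{x} \cap \CB{\diam{\sigma}}{y}$, and by \cref{contained-in-max-simplex} this intersection equals $\{x,y\}$, forcing $\sigma = e_\sigma$. Hence any non-critical simplex of the generic gradient---that is, any simplex which is neither a vertex nor a tree edge---has $e_\sigma$ a non-tree edge and lies in the interval $[e_\sigma, \Delta_{e_\sigma}]$. Conversely, for any non-tree edge $e$, every $\sigma' \in [e, \Delta_e]$ satisfies $\diam{\sigma'} = \diam{e}$ (since $e \subseteq \sigma' \subseteq \Delta_e$ and $\diam{\Delta_e} = \diam{e}$ by \cref{max-simp-on-sphere}), so $e$ remains the unique diameter-realizing edge of $\sigma'$ by genericity.

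From here, both collapses follow. For $\VR{r_m}{V} \searrow \K{m}$, the complement $\RC{m}$ consists of simplices of diameter exactly $r_m$ that are not tree edges; each such simplex has $e_\sigma$ a non-tree edge of length $r_m$, and conversely each interval $[e, \Delta_e]$ with $e$ a non-tree edge of length $r_m$ lies entirely in $\RC{m}$. For $\K{m} \searrow T_{r_m}$, the complement equals $\VR{r_{m-1}}{V} \setminus T_{r_m}$ and consists precisely of the non-critical simplices of the generic gradient lying in $\VR{r_{m-1}}{V}$; each such simplex determines a non-tree edge $e_\sigma$ of length at most $r_{m-1}$, and the whole interval $[e_\sigma, \Delta_{e_\sigma}]$ stays within this complement because $\diam{\Delta_{e_\sigma}} = \diam{e_\sigma} \leq r_{m-1}$. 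The only mild obstacle is checking that no interval of the gradient straddles the boundary of a subcomplex; genericity ensures this by making $\diam$ constant on each interval, after which \cref{collapsing-theorem} yields both collapses.
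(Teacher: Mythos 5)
Your proof is correct and follows essentially the same route as the paper, which simply observes (via \cref{contained-in-max-simplex}) that genericity makes $\diam$ a discrete Morse function whose regular intervals are the $[e,\Delta_e]$ for non-tree edges $e$, with only vertices and tree edges critical, and then invokes \cref{collapsing-theorem}. Your write-up supplies the details the paper leaves implicit—uniqueness of the diameter-realizing edge, constancy of $\diam$ on each interval, and the verification that each complement is a union of intervals—all of which check out.
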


Moreover, it follows from \cref{apparent-equal-gradient} that for the Vietoris--Rips filtration, refined lexicographically with respect to an arbitrary total order on the vertices, the zero persistence apparent pairs refine the generic gradient, and therefore also induce the above collapses.

\begin{theorem}
\label{generic-apparent-pairs-collapse}
If the tree metric space $(V,d)$ is generic, then the apparent pairs gradient induces, for every $m\in\{1,\dots,l\}$, a sequence of collapses
\[\VR{r_{m}}{V}\searrow (\VR{r_{m-1}}{V} \cup T_{r_m}) \searrow T_{r_m} . \]
\end{theorem}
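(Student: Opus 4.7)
The plan is to deduce this theorem from \cref{generic-collapse} by using \cref{apparent-equal-gradient} to translate the generic gradient into apparent pairs. Since the tree metric is generic, the diameter function $\diam{}\colon \cl{(V)}\to\mathbb{R}$ is a discrete Morse function whose discrete gradient is precisely the generic gradient described in \cref{non-degenerate-tree-metric}. For any total order on $V$, \cref{apparent-equal-gradient} identifies the minimal vertex refinement of this gradient with the collection of zero persistence apparent pairs for the $\diam{}$-lexicographic order.

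The key observation is that the minimal vertex refinement subdivides each interval $[e,\Delta_e]$ of the generic gradient into smaller gradient pairs whose set-theoretic union remains $[e,\Delta_e]$. Consequently, the two complements $\VR{r_m}{V} \setminus (\VR{r_{m-1}}{V} \cup T_{r_m})$ and $(\VR{r_{m-1}}{V} \cup T_{r_m}) \setminus T_{r_m}$, which were unions of intervals of the generic gradient in the proof of \cref{generic-collapse}, are also unions of intervals of the refined gradient. Applying \cref{collapsing-theorem} to the restriction of the refined gradient to $\VR{r_m}{V}$ then yields the desired sequence of collapses.

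To conclude that the full apparent pairs gradient, and not only its zero persistence part, induces these collapses, one must further verify that every non-zero persistence apparent pair in $\VR{r_m}{V}$ lies entirely inside $T_{r_m}$. Genericity of the tree metric ensures that every simplex of dimension at least two has a unique diameter-attaining edge, so its maximal facet automatically has the same diameter; hence any non-zero persistence apparent pair must be of the form $(\{v\},\{v,w\})$ with $\{v,w\}$ an edge. A short argument using the tree structure then shows that $w$ must be a tree neighbor of $v$: otherwise the first vertex on the shortest tree path from $v$ to $w$ would be strictly closer to $v$, contradicting the minimality of $\{v,w\}$ among cofacets of $\{v\}$ in the $\diam{}$-lexicographic order. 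Such a pair lies in $T_{r_m}$ whenever $\{v,w\}\in\VR{r_m}{V}$, and hence does not disturb the complements above. I expect this last verification of the harmlessness of non-zero persistence apparent pairs to be the main obstacle, since the remaining steps follow directly from the tools assembled earlier in the paper.
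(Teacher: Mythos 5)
Your first two paragraphs are exactly the paper's argument: by \cref{apparent-equal-gradient} the zero persistence apparent pairs are the minimal vertex refinement of the generic gradient, the refinement has the same underlying union of intervals, so the complements $\VR{r_m}{V}\setminus(\VR{r_{m-1}}{V}\cup T_{r_m})$ and $(\VR{r_{m-1}}{V}\cup T_{r_m})\setminus T_{r_m}$ remain unions of intervals of the apparent pairs gradient, and \cref{collapsing-theorem} applies. Your third paragraph, which you single out as the main obstacle, is superfluous: the intervals of a discrete gradient are pairwise disjoint, so once those complements are covered by zero persistence apparent pairs, no other apparent pair can meet them, and \cref{collapsing-theorem} requires nothing about where the remaining pairs live. (Your side claim is nonetheless true and automatic: every simplex other than a vertex or a tree edge lies in a regular interval of the generic gradient, hence in a zero persistence apparent pair, so any non-zero persistence apparent pair is necessarily of the form $(\{v\},\{v,w\})$ with $\{v,w\}$ a tree edge.)
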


\subsection{Arbitrary tree metrics}
We now turn to the general case, where \cref{apparent-equal-gradient} is not directly applicable anymore, as the diameter function is not necessarily a discrete Morse function. Nevertheless, we show that \cref{generic-apparent-pairs-collapse} is still true without the genericity assumption, if the vertices $V$ are ordered in a compatible way.
Let $\Delta$ be
a maximal simplex $\Delta\in \RC{m} \subseteq \VR{r_{m}}{V}$.

  \begin{lemma}
 \label{star-is-complement}
    We have $\St{E_\Delta}=\RC{m}\cap \cl\Delta$.
 \end{lemma}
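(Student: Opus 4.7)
The plan is to prove both inclusions of the set equality $\St{E_\Delta} = \RC{m} \cap \cl\Delta$ separately. The inclusion $\St{E_\Delta} \subseteq \RC{m} \cap \cl\Delta$ will follow almost directly from \cref{contained-in-max-simplex} together with the definition of $\RC{m}$, while the reverse inclusion requires a small argument using the tree structure to locate a non-tree edge of length $r_m$ inside an arbitrary simplex of $\RC{m} \cap \cl\Delta$.

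For the inclusion $\St{E_\Delta} \subseteq \RC{m} \cap \cl\Delta$, I would start by picking $\sigma \in \St{E_\Delta}$, so that $\sigma$ is a coface of some $e = \{x,y\} \in E_\Delta$ in $\VR{r_m}{V}$. Since $e$ is a non-tree edge of length $r_m$ contained in $\sigma$, the simplex $\sigma$ has diameter exactly $r_m$ and is not contained in $T_{r_m}$, so $\sigma \in \RC{m}$. Moreover, every vertex $v \in \sigma$ satisfies $d(v,x) \leq r_m$ and $d(v,y) \leq r_m$ by the diameter bound, so $v \in \CB{r_m}{x} \cap \CB{r_m}{y}$. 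By the description of $\Delta_e$ in the proof of \cref{contained-in-max-simplex}, this means $\sigma \subseteq \Delta_e = \Delta$, i.e., $\sigma \in \cl\Delta$.

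For the converse inclusion, I take $\sigma \in \RC{m} \cap \cl\Delta$ and aim to produce $e \in E_\Delta$ with $e \subseteq \sigma$. Since $\sigma \in \RC{m}$, its diameter equals $r_m$, so there exist $x, y \in \sigma$ with $d(x,y) = r_m$. If $\sigma$ is itself an edge, then $\sigma = \{x,y\}$ is a non-tree edge of length $r_m$ contained in $\Delta$, so $\sigma \in E_\Delta$ and trivially $\sigma \in \St{E_\Delta}$. Otherwise $\dim \sigma \geq 2$, and I claim $\sigma$ must contain a non-tree edge of length $r_m$. The argument is by contradiction: assume every length-$r_m$ edge in $\sigma$ is a tree edge, and consider such an edge $\{x,y\}$. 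Removing it from $T$ splits the tree into two subtrees $T_x \ni x$ and $T_y \ni y$. Any third vertex $z \in \sigma$ lies in one of them; if $z \in T_x$, the unique path from $y$ to $z$ passes through $x$, giving $d(y,z) = r_m + d(x,z) > r_m$, contradicting $\diam\sigma = r_m$, and symmetrically if $z \in T_y$. Hence such an $e \subseteq \sigma$ exists, and since $e \subseteq \sigma \subseteq \Delta$, we have $e \in E_\Delta$ and $\sigma \in \St{e} \subseteq \St{E_\Delta}$.

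The only nontrivial step is the tree argument guaranteeing a non-tree realizing edge in a higher-dimensional simplex of diameter $r_m$; everything else is bookkeeping against the definitions of $\RC{m}$, $E_\Delta$, and $\Delta_e$. I expect no serious obstacle beyond making that splitting argument clean, since it relies purely on uniqueness of shortest paths in the tree.
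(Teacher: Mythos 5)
Your proof is correct and follows essentially the same route as the paper: both inclusions are verified directly, with the key step being that a diameter-realizing edge of a simplex in $\RC{m}\cap\cl\Delta$ cannot be a tree edge. The only difference is that you re-derive this last fact via an explicit tree-splitting argument, whereas the paper simply invokes \cref{contained-in-max-simplex}, whose proof already contains the same observation.
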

 \begin{proof}
      The inclusion $\St{E_\Delta}\subseteq \RC{m}\cap \cl\Delta$ holds by definition of $E_\Delta$.
      To show the inclusion $\St{E_\Delta}\supseteq \RC{m}\cap \cl\Delta$, let $\sigma\in \RC{m}\cap \cl\Delta$ be any simplex.
      As the Vietoris--Rips complex is a clique complex, there exists an edge~$e\subseteq \sigma\subseteq \Delta$ with $\diam{e}=r_m$. By \cref{contained-in-max-simplex}, this edge can not be a tree edge end hence $e\in \RC{m}$. Therefore, $e\in E_\Delta$ and $\sigma\in \St{e}\subseteq \St{E_\Delta}$.
 \end{proof}

\begin{lemma}
\label{maximal-simplices-independent}
 If two distinct maximal simplices $\Delta, \Delta'\in \RC{m}=\VR{r_m}{V}\setminus \K{m}$ intersect in a common face~$\Delta\cap \Delta'$, then this face is contained in $\K{m}$.
\end{lemma}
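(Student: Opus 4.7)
The plan is to argue by contradiction. I would assume $\sigma := \Delta \cap \Delta' \notin \K{m}$ and derive $\Delta = \Delta'$, contradicting the hypothesis. Since $\VR{r_{m-1}}{V} \subseteq \K{m}$, this assumption forces $\diam{\sigma} = r_m$, and because the Vietoris--Rips complex is a flag complex, $\sigma$ must contain an edge $f$ of length exactly $r_m$. The rest of the argument then splits on whether $f$ is a tree edge, in both cases applying the uniqueness clause of \cref{contained-in-max-simplex} to $f$.

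If $f$ is a tree edge, \cref{contained-in-max-simplex} gives $\Delta_f = f$. By the uniqueness of $\Delta_f$ as the maximal simplex in $\VR{r_m}{V} \setminus \VR{r_{m-1}}{V}$ containing $f$, every simplex in this set that contains $f$ must be contained in $f$. Applied to $\Delta$, this forces $\Delta \subseteq f$, contradicting the fact that $\Delta = \Delta_e$ for some non-tree edge $e \in \RC{m}$ and hence satisfies $e \subsetneq \Delta$ by the second part of \cref{contained-in-max-simplex}; in particular $\Delta$ has at least three vertices.

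If instead $f$ is a non-tree edge, then $f \in \RC{m}$ and \cref{contained-in-max-simplex} yields a simplex $\Delta_f \in \RC{m}$ with $\Delta, \Delta' \subseteq \Delta_f$. Maximality of $\Delta$ and $\Delta'$ in $\RC{m}$ then forces $\Delta = \Delta_f = \Delta'$, contradicting $\Delta \neq \Delta'$.

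The argument is essentially a formal consequence of the uniqueness clause in \cref{contained-in-max-simplex}; no geometric step is delicate. The only bookkeeping that requires care is the initial extraction of an edge $f \subseteq \sigma$ of length $r_m$ via the flag property, together with the clean tree versus non-tree case split for $f$.
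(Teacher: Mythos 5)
Your proof is correct and follows essentially the same route as the paper: assume the intersection lies in $\RC{m}$, extract an edge of length $r_m$ via the flag property, rule out the tree-edge case, and conclude from the uniqueness clause of \cref{contained-in-max-simplex}. The only difference is packaging — the paper delegates the extraction of a non-tree edge in $E_\Delta$ to \cref{star-is-complement}, whereas you inline that step with an explicit tree/non-tree case split.
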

\begin{proof}
Assume for a contradiction that $\emptyset \neq \Delta\cap \Delta' \notin \K{m}$,
implying $\Delta\cap \Delta' \in \RC{m}$.
By \cref{star-is-complement}, there exists an edge $e\in E_\Delta\subseteq \RC{m}$ with $e\subseteq \Delta\cap \Delta'$, and therefore $\Delta=\Delta'$ by uniqueness of the maximal simplex containing $e$ (\cref{contained-in-max-simplex}),
a contradiction.
\end{proof}

 We denote by $L_\Delta$ the set of all vertices of~$\Delta$ that are not contained in any edge in~$E_\Delta$.
\begin{lemma}
 \label{point-on-path-interior}
Let $e=\{u,w\}\in E_\Delta$ be an edge.
Then any point $x\in V \setminus \{u,w\}$ on the unique shortest path $u\rightsquigarrow w$ of length $r_{m}$ in $T$ is contained in $L_\Delta$.
In particular, $L_\Delta$ is nonempty.
 \end{lemma}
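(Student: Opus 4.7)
The plan is to verify that any interior vertex $x\neq u,w$ on the tree path from $u$ to $w$ satisfies two conditions: (i) $x$ is a vertex of $\Delta$, and (ii) $x$ is contained in no edge of $E_\Delta$. The final ``in particular'' claim will then follow by observing that, since $\{u,w\}\in E_\Delta$ is by definition a non-tree edge, the unique path in $T$ from $u$ to $w$ of length $r_m$ must traverse at least two tree edges, and hence contain at least one interior vertex of $V$.

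For (i), I would reuse the explicit description of the maximal simplex obtained in the proof of \cref{contained-in-max-simplex}: $\Delta=\Delta_e$ is spanned by the points of $\CB{r_m}{u}\cap \CB{r_m}{w}$. Since $x$ lies strictly between $u$ and $w$ on a shortest path of length $r_m$, we have the triangle equality $d(u,x)+d(x,w)=r_m$ with both summands strictly in $(0,r_m)$, so $x\in \CB{r_m}{u}\cap \CB{r_m}{w}$ and thus $x$ is a vertex of $\Delta$. For (ii), the key tool is the second assertion of \cref{max-simp-on-sphere}: any edge $\{a,b\}\subseteq \CB{r_m}{u}\cap \CB{r_m}{w}$ of length $r_m$ has both endpoints in $\Sph{r_m}{u}\cup\Sph{r_m}{w}$. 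Supposing for contradiction that $\{x,y\}\in E_\Delta$ for some $y\in V$, this would force $d(u,x)=r_m$ or $d(w,x)=r_m$, contradicting the strict interiority of $x$ just established.

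I do not foresee a genuine obstacle here; the statement is essentially a packaging of \cref{max-simp-on-sphere} with the characterization of $\Delta_e$ from \cref{contained-in-max-simplex}. The only points requiring care are to track that $d(u,x)$ and $d(x,w)$ are \emph{strictly} less than $r_m$ (using $x\neq u,w$ together with positivity of the distance between distinct points), and to apply \cref{max-simp-on-sphere} with $(u,w)$ in the role of the distinguished pair rather than $(x,y)$.
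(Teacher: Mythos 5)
Your proposal is correct and matches the paper's proof in all essentials: both establish $x\in\Delta_e$ via $d(u,x),d(w,x)<r_m$ (the paper phrases this as $\diam{\{u,w,x\}}=r_m$, you via the ball-intersection description of $\Delta_e$), and both rule out $x$ lying on an edge of $E_\Delta$ by the sphere condition in \cref{max-simp-on-sphere}. Your explicit justification of the ``in particular'' clause is a small addition the paper leaves implicit, but the argument is the same.
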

 \begin{proof}
 By assumption, we have $d(u,x)<r_m$, $d(w,x)<r_m$ and $d(u,w)=r_m$. Therefore, $\diam{\{u,w,x\}}=r_m$ and $x\in\{u,w,x\}\subseteq\Delta_e= \Delta$.
 Assume for a contradiction that $x$ is contained in an edge in~$E_\Delta$. Then it follows from \cref{max-simp-on-sphere} that we have $d(u,x)=r_{m}$ or $d(w,x)=r_{m}$, contradicting the above.
 We conclude that $x \in L_\Delta$.
 \end{proof}

\subsubsection{The canonical gradient}
We now describe a discrete gradient that is compatible with the diameter function and induces the same collapses as in \cref{generic-collapse} even if the tree metric is not generic. This construction is \emph{canonical} in the sense that it does not depend on the choice of an order on the vertices, in contrast to the subsequent constructions.
\begin{lemma}
\label{edelta-has-good-edges}
 For any two edges $f,e\in E_\Delta$ and any vertex $v\in f$ there exists a vertex $z\in e$ such that $\{v,z\}\in E_\Delta$ is an edge in $E_\Delta$.
\end{lemma}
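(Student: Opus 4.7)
The plan is to write $e=\{u,w\}$ and $f=\{v,v'\}$; by definition of $E_\Delta$ (as recalled just before this lemma), both are non-tree edges of length $r_m$ contained in $\Delta$, and they satisfy $\Delta_e=\Delta_f=\Delta$. The strategy is to first produce a vertex $z\in e$ satisfying $d(v,z)=r_m$, and then verify that $\{v,z\}$ is a non-tree edge: by the identification of $E_\Delta$ as the set of non-tree edges of length $r_m$ contained in $\Delta$, this will yield $\{v,z\}\in E_\Delta$ as required.

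For the first step, I would apply the second claim of \cref{max-simp-on-sphere} with $x=u$, $y=w$, $r=r_m$ and the points $a=v$, $b=v'$. Since $v,v'\in f\subseteq\Delta$ and any two vertices of $\Delta$ have distance at most $r_m$, both $v$ and $v'$ lie in $\CB{r_m}{u}\cap\CB{r_m}{w}$; together with $d(v,v')=r_m$, the lemma forces $v\in\Sph{r_m}{u}\cup\Sph{r_m}{w}$. I would then choose $z\in\{u,w\}$ so that $d(v,z)=r_m$; since $r_m>0$, automatically $z\neq v$, so $\{v,z\}$ is a genuine edge of length $r_m$ contained in $\Delta$. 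Note that this covers the ``degenerate'' situations $v=u$ or $v=w$ uniformly: in either case the forced endpoint is the other vertex of $e$.

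The main obstacle is then to rule out that $\{v,z\}$ is a tree edge, and I plan to handle this by contradiction. Suppose $\{v,z\}$ is a tree edge of length $r_m$. Then the final claim of \cref{contained-in-max-simplex} gives $\Delta_{\{v,z\}}=\{v,z\}$. On the other hand, $\Delta$ is itself a simplex of $\VR{r_m}{V}\setminus\VR{r_{m-1}}{V}$ (as $\diam{\Delta}=r_m$) that contains the edge $\{v,z\}$, so the uniqueness part of \cref{contained-in-max-simplex} forces $\Delta\subseteq\Delta_{\{v,z\}}=\{v,z\}$, whence $\Delta=\{v,z\}$. But then $\Delta$ would be a tree edge of length $r_m$, placing it in $T_{r_m}\subseteq\K{m}$ and contradicting $\Delta\in\RC{m}=\VR{r_m}{V}\setminus\K{m}$. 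Hence $\{v,z\}$ is a non-tree edge of length $r_m$ in $\Delta$, i.e., $\{v,z\}\in E_\Delta$, finishing the argument.
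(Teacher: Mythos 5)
Your proposal is correct and follows essentially the same route as the paper's proof: apply the second claim of \cref{max-simp-on-sphere} to the endpoints of $f$ viewed as points of $\CB{r_m}{u}\cap\CB{r_m}{w}$ to force $d(v,z)=r_m$ for some $z\in e$, then use \cref{contained-in-max-simplex} to exclude the possibility that $\{v,z\}$ is a tree edge. Your explicit handling of the degenerate case $v\in e$ and the spelled-out contradiction for the tree-edge exclusion are just more detailed versions of what the paper leaves implicit.
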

\begin{proof}
Let $f=\{v,w\},e=\{x,y\}$; note that $d(v,w)=d(x,y)=r_m$. Since~$f$ and $e$ are both contained in the maximal simplex $\Delta$, we have $v,w\in\CB{r_m}{x}\cap\CB{r_m}{y}$. Both $\{v,x\}$ and~$\{v,y\}$ are contained in $\{v,x,y\}\subseteq\Delta$ and \cref{max-simp-on-sphere} implies that at least one of these two edges is contained in $\VR{r_m}{V}\setminus\VR{r_{m-1}}{V}$; call this edge $e_v$. It follows from \cref{contained-in-max-simplex} that $e_v$ is not a tree edge, and therefore $e_v\in E_\Delta$.
\end{proof}

\begin{lemma}
\label{simplex-collapse}
 The set $\St{E_\Delta}=\RC{m}\cap \cl\Delta$ is partitioned by the intervals
   \begin{equation}
  \label{equ:gradient}
   W_\Delta=\{[\cup S,(\cup S)\cup L_\Delta] \mid \emptyset\neq S\subseteq E_\Delta\} ,
  \end{equation}
 and these form a discrete gradient on $\cl\Delta$ inducing a collapse $\cl\Delta\searrow (\K{m}\cap \cl{\Delta})$.
\end{lemma}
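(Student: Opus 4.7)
The plan is to verify three things: that the intervals in $W_\Delta$ partition $\St{E_\Delta}$; that each such interval is regular; and that, together with singleton intervals on the remaining cells of $\cl\Delta$, they arise as the level intervals of a discrete Morse function on $\cl\Delta$, making $W_\Delta$ a discrete gradient in the paper's sense. The collapse $\cl\Delta\searrow(\K{m}\cap\cl\Delta)$ then follows from \cref{collapsing-theorem}, since by \cref{star-is-complement} the union of intervals is $\St{E_\Delta}=\cl\Delta\setminus(\K{m}\cap\cl\Delta)$.

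For the partition step, given $\sigma\in\St{E_\Delta}$, I would set $S_\sigma=\{e\in E_\Delta\mid e\subseteq\sigma\}$ and show $\sigma\in [\cup S_\sigma,\,\cup S_\sigma\cup L_\Delta]$. The lower containment $\cup S_\sigma\subseteq\sigma$ is immediate. For the upper containment $\sigma\subseteq\cup S_\sigma\cup L_\Delta$, any vertex $v\in\sigma\setminus L_\Delta$ lies in some edge $e'\in E_\Delta$ by definition of $L_\Delta$; applying \cref{edelta-has-good-edges} to $e'$ and any $f\in S_\sigma$ (nonempty since $\sigma\in\St{E_\Delta}$) produces $z\in f\subseteq\sigma$ with $\{v,z\}\in E_\Delta$, so $v\in\cup S_\sigma$. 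Uniqueness of the containing interval is then short bookkeeping: if $\sigma\in [\cup S_1,\cup S_1\cup L_\Delta]$, the lower containment gives $S_1\subseteq S_\sigma$, and since $L_\Delta$ is disjoint from the vertex set of every edge in $E_\Delta$, the chain $\cup S_\sigma\subseteq\sigma\subseteq\cup S_1\cup L_\Delta$ forces $\cup S_\sigma\subseteq\cup S_1$, so $\cup S_1=\cup S_\sigma$. Regularity follows from $\cup S\cap L_\Delta=\emptyset$ together with $L_\Delta\neq\emptyset$, the latter being \cref{point-on-path-interior}.

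To promote $W_\Delta$, extended by singletons on $\cl\Delta\cap\K{m}$, to a discrete Morse structure, I would define a relation $X<Y$ on blocks whenever $X\neq Y$ and some simplex of $X$ is a face of some simplex of $Y$. A brief case analysis, using that each interval in $W_\Delta$ has minimum $\cup S$ and maximum $\cup S\cup L_\Delta$ with $\cup S\cap L_\Delta=\emptyset$, shows this relation is transitive and irreflexive, hence a strict partial order. Any linear extension assigns real values to the blocks whose levelwise lifting to $\cl\Delta$ is monotonic and constant precisely on each block, exhibiting a discrete Morse function whose set of regular intervals is exactly $W_\Delta$.

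I expect the main obstacle to be the upper containment in the partition step: \cref{edelta-has-good-edges} is the delicate geometric input that forces every vertex of $\sigma$ outside $L_\Delta$ to be covered by an edge of $E_\Delta$ inside $\sigma$, and it is this uniformity that makes the interval shape $[\cup S,\cup S\cup L_\Delta]$ work at all. Once this is in hand, regularity, the Morse-function construction, and the final appeal to \cref{collapsing-theorem} for the collapse are essentially formal.
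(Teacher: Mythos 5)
Your proposal is correct and follows essentially the same route as the paper: the heart of both arguments is that every vertex of $\sigma\in\St{E_\Delta}$ outside $L_\Delta$ is absorbed, via \cref{edelta-has-good-edges}, into an edge of $E_\Delta$ contained in $\sigma$, so that $\sigma\setminus L_\Delta=\cup S_\sigma$ and the intervals partition $\St{E_\Delta}$, with regularity from \cref{point-on-path-interior} and the collapse from \cref{star-is-complement} and \cref{collapsing-theorem}. The only difference is cosmetic: to certify that the interval partition is a discrete gradient, the paper writes down the explicit discrete Morse function $\sigma\mapsto\dim(\sigma\cup L_\Delta)$ on $\St{E_\Delta}$ and $\sigma\mapsto\dim\sigma$ elsewhere, whereas you build one abstractly from a linear extension of a block order (your unproved transitivity claim does check out, since a block $[\cup S,(\cup S)\cup L_\Delta]$ lies below $[\cup S',(\cup S')\cup L_\Delta]$ exactly when $\cup S\subsetneq\cup S'$, and no singleton block of $\K{m}\cap\cl\Delta$ can lie above an interval block).
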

\begin{proof}
  The intervals in $W_\Delta$ are disjoint and contained in $\St{E_\Delta}$ by construction. They are regular, because $L_\Delta$ is nonempty (by \cref{point-on-path-interior}).
  By \cref{collapsing-theorem}, it remains to show that the intervals in $W_\Delta$ partition $\St{E_\Delta}=\cl\Delta\setminus (\K{m}\cap \cl{\Delta})$ and that $W_\Delta$ is a discrete gradient.
  
To show the first claim, it suffices to prove that any simplex $\sigma\in \St{E_\Delta}$ is contained in a regular interval of $W_\Delta$.
Consider the simplex $\tau=\sigma\setminus L_\Delta\subseteq \sigma$. As $\sigma\in \St{E_\Delta}$, there exists an edge $%
e\in E_\Delta$ with $e\subseteq \sigma$.
By the definition of $L_\Delta$, we have $e\subseteq \sigma\setminus L_\Delta= \tau$. Any other vertex $v\in \tau\setminus e$ is also contained in one of the edges $E_\Delta$. By \cref{edelta-has-good-edges}, there exists an edge $e_v = \{v,w\}\in E_\Delta$, where $w \in e$.
Then $\tau = e\cup\bigcup_{v\in {\tau\setminus e}}e_v$ and $\sigma \in [\tau,\tau\cup L_\Delta] \in W_\Delta$.

The second claim now follows from the observation that
the function
\[\sigma \mapsto
\begin{cases}
\dim(\sigma \cup L_\Delta) & \sigma \in \St{E_\Delta} \\
\dim\sigma & \sigma \notin \St{E_\Delta}
\end{cases}
\]
is a discrete Morse function with discrete gradient $W_\Delta$.
\end{proof}
\begin{arxiv}
\begin{example}
\label{example-generic-gradient}
Consider the following tree with vertex set $V = \{a,b,c,d\}$, whose edges all have length one:
\[
\begin{tikzpicture}
 \node (A) at (0,0) {a};
 \node (B) at (1,0) {b};
 \node (C) at (2,.5) {c};
 \node (D) at (2,-.5) {d};
 \draw[-] (A) -- (B);
 \draw[-] (B) -- (C);
 \draw[-] (B) -- (D);
 \end{tikzpicture}
 \]
The complex $\VR{2}{V}$ is the full simplicial complex $\cl{V}$ with maximal simplex $\Delta=V=\{a,b,c,d\}$. We have $E_\Delta=\{\{a,c\},\{a,d\},\{c,d\}\}$ and the set $L_\Delta$ only contains the vertex~$\{b\}$. Therefore, we get \[W_\Delta=\{(\{a,c\},\{a,b,c\}),(\{a,d\},\{a,b,d\}),(\{c,d\},\{b,c,d\}),(\{a,c,d\},\{a,b,c,d\})\}.\]
 \end{example}
\end{arxiv}

Consider the union $W_m=\bigcup_\Delta W_\Delta$, where~$\Delta$ runs over all maximal simplices in $\RC{m}$ and $W_\Delta$ is as in \eqref{equ:gradient}. We call $W=\bigcup_mW_m$ the \emph{canonical gradient}.
 \begin{theorem}
 \label{tree-collapse}
  The canonical gradient is a discrete gradient on $\cl{(V)}$. For every~$m\in \{1,\dots,l\}$, it induces a sequence of collapses
  \[\VR{r_{m}}{V}\searrow  \VR{r_{m-1}}{V} \cup T_{r_{m}} \searrow T_{r_m} . \]
 \end{theorem}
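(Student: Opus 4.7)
The plan is to verify that the canonical gradient $W = \bigcup_m W_m$ is a discrete gradient on $\cl V$ by assembling the local gradients $W_\Delta$ via Proposition~\ref{union-of-gradients}, and then to derive the two filtered collapses using Proposition~\ref{collapsing-theorem}.

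For the first step, I will cover $\cl V$ by the subcomplexes $\cl \Delta$ (equipped with the gradient $W_\Delta$) for every $\Delta$ that is maximal in some $\RC{m}$, together with $\cl \tau$ (equipped with the trivial gradient) for every tree simplex $\tau \in T$; these do cover $\cl V$ because every non-tree simplex $\sigma$ of diameter $r_m$ lies in some maximal $\Delta \in \RC{m}$ by Lemma~\ref{contained-in-max-simplex}. To apply Proposition~\ref{union-of-gradients}, I need to check that each simplex has a unique minimal cover element containing it and is critical in every other such cover element. Every tree simplex $\tau \in T$ obviously has $\cl \tau$ as the unique minimal, and is critical in each $W_\Delta$ containing it because $\tau \in \K{m} \cap \cl \Delta$ lies outside $\St{E_\Delta} = \RC{m} \cap \cl \Delta$ by Lemma~\ref{star-is-complement}.

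For a non-tree simplex $\sigma \in \RC{m_0}$, the cover elements containing $\sigma$ are exactly the $\cl \Delta$ for maximal $\Delta \in \RC{m}$ with $\sigma \subseteq \Delta$, forcing $m \geq m_0$. Lemma~\ref{maximal-simplices-independent} gives a unique such $\Delta_0$ in the case $m = m_0$, and it remains to show that $\cl \Delta_0 \subseteq \cl \Delta$ for every larger maximal $\Delta \in \RC{m}$ with $m > m_0$ containing $\sigma$. I expect this to be the main obstacle, and to rely essentially on the $0$-hyperbolicity of the tree metric. Writing $\Delta_0 = \CB{r_{m_0}}{x_0} \cap \CB{r_{m_0}}{y_0}$ for some $\{x_0, y_0\} \in E_{\Delta_0}$ and $\Delta = \CB{r_m}{x} \cap \CB{r_m}{y}$ for some $\{x, y\} \in E_\Delta$ as in the proof of Lemma~\ref{contained-in-max-simplex}, the inclusion $\{x_0, y_0\} \subseteq \Delta$ provides $d(x_0, x), d(x_0, y), d(y_0, x), d(y_0, y) \leq r_m$, and for any $p \in \Delta_0$ the four-point condition~\eqref{gromov-hyperbolic-alternative} applied to $p, x, x_0, y_0$ yields $d(p, x) + r_{m_0} \leq r_{m_0} + r_m$, and symmetrically $d(p, y) \leq r_m$, so that $p \in \Delta$. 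The simplex $\sigma$ is then critical in each such $W_\Delta$ because $\diam \sigma = r_{m_0} < r_m$ prevents it from being a coface of any edge in $E_\Delta$. Proposition~\ref{union-of-gradients} therefore yields that $W$ is a discrete gradient on $\cl V$.

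The two collapses then follow from Proposition~\ref{collapsing-theorem}. Lemma~\ref{simplex-collapse} shows that the complement $\VR{r_m}{V} \setminus \K{m} = \RC{m}$ is partitioned by the intervals of $W_m$, giving the first collapse $\VR{r_m}{V} \searrow \K{m} = \VR{r_{m-1}}{V} \cup T_{r_m}$. Similarly, the complement $\K{m} \setminus T_{r_m} = \bigcup_{m' < m} \RC{m'}$ is partitioned by the intervals of $W_1 \cup \dots \cup W_{m-1}$. Since every interval of $W$ sits entirely inside a single $\RC{m'}$, and the sets $\RC{m'}$ respect the inclusions into $\VR{r_m}{V}$ and $\K{m}$, the restrictions of $W$ to these two subcomplexes are themselves discrete gradients, so Proposition~\ref{collapsing-theorem} applies in both cases.
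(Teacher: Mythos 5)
Your proof is correct, and the local-to-global skeleton (the gradients $W_\Delta$ from \cref{simplex-collapse}, glued via \cref{union-of-gradients}, with the collapses extracted by \cref{collapsing-theorem}) is the same as the paper's. The difference lies in how the gluing is organized. The paper applies \cref{union-of-gradients} twice: first within a fixed diameter level $r_m$, where \cref{maximal-simplices-independent} gives the unique minimal $\cl{\Delta}$, and then across levels using the totally ordered cover $\{\VR{r_m}{V}\}_m$ equipped with the gradients $W_m$ — there the unique minimal subcomplex containing a simplex of diameter $r_{m_0}$ is just $\VR{r_{m_0}}{V}$, and criticality in the other $W_m$ is immediate because their regular simplices all have diameter $r_m>r_{m_0}$. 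You instead apply the proposition once to a flat cover by all the $\cl{\Delta}$ (plus the tree simplices), which forces you to prove the nesting $\cl{\Delta_0}\subseteq\cl{\Delta}$ for maximal simplices at different scales containing a common $\sigma$. Your four-point-condition argument for this is valid — with the small caveat that you should choose the edge $\{x_0,y_0\}\in E_{\Delta_0}$ to lie inside $\sigma$ (possible by \cref{star-is-complement}), since for an arbitrary edge of $E_{\Delta_0}$ the asserted inclusion $\{x_0,y_0\}\subseteq\Delta$ is not yet known; once one such edge is handled, $\Delta_0\subseteq\Delta$ follows and the point is moot. So your route buys a single, fully explicit verification of the hypotheses of \cref{union-of-gradients} at the price of an extra geometric lemma that the paper's nested application avoids entirely; it also incidentally records the (true, and mildly interesting) fact that the maximal simplices of the $\RC{m}$ containing a fixed simplex are nested across scales. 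The treatment of the two collapses at the end, including the observation that every interval of $W$ lies in a single $\RC{m'}$ so that restrictions to $\VR{r_m}{V}$ and $\K{m}$ are again gradients, matches the paper's (terser) reasoning.
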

 
 \begin{proof}
 Let $\Delta$ be a maximal simplex in $\Delta\in \RC{m}=\VR{r_m}{V}\setminus \K{m}$, where~$\K{m}= \VR{r_{m-1}}{V} \cup T_{r_m}$.
 It follows from \cref{simplex-collapse} that the set $W_\Delta$ is a discrete gradient on the full subcomplex~$\cl{\Delta}\subseteq \VR{r_m}{V}$ that partitions $\St{E_\Delta}=\cl{\Delta}\setminus (\K{m}\cap \cl{\Delta})$ and that induces a collapse~$\cl{\Delta}\searrow (\K{m}\cap \cl{\Delta})$.

It follows directly from \cref{maximal-simplices-independent} and \cref{union-of-gradients} that the union $W_m=\bigcup_\Delta W_\Delta$ is a discrete gradient on $\VR{r_{m}}{V}$. Again by \cref{union-of-gradients}, the union~$W=\bigcup_mW_m$ is a discrete gradient on $\cl{(V)}$.

By construction of the $W_\Delta$, the union~$W_m$ partitions the complement $\VR{r_{m}}{V}\setminus \K{m}$.
Hence, by \cref{collapsing-theorem}, it induces a collapse~$\VR{r_m}{V}\searrow \K{m} = \VR{r_{m-1}}{V} \cup T_{r_{m}}$.
Since only the vertices and the tree edges are critical for $W$, this also yields the collapse to $T_{r_m}$.
 \end{proof}

\subsubsection{The perturbed gradient}

Assume that $V$ is totally ordered. We construct a coarsening of the canonical gradient to the \emph{perturbed gradient}, such that under a specific total order of $V$ the perturbed gradient is refined by the zero persistence apparent pairs of the $\diam$-lexicographic order~$<$ on simplices.

Consider a maximal simplex $\Delta\in \RC{m}$, where $m\in \{1,\dots,l\}$. Note that all edges in $E_\Delta$ have length $r_m$ and thus are ordered lexicographically. Enumerate them as $e_1<\dots<e_q$. Every simplex $\sigma\in \RC{m}\cap\cl{\Delta}$ contains a maximal edge $e_\sigma\in \cl{\sigma}\cap E_\Delta$.

\begin{lemma}
\label{largest-edge-sigma-i}
For every edge $e_i\in E_\Delta$ the union $\Sigma_i=\bigcup_{e_\sigma=e_i}\sigma\subseteq \Delta$ is a simplex in $\RC{m}$ and the maximal edge among $\cl{\Sigma_i}\cap E_\Delta$ is $e_i$.
\end{lemma}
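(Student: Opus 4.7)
The claim has two halves: $\Sigma_i \in \RC{m}$, and the lex-maximal edge of $E_\Delta$ contained in $\Sigma_i$ equals $e_i$. The first half is bookkeeping: each $\sigma$ in the union defining $\Sigma_i$ lies in $\Delta$, so $\Sigma_i \subseteq \Delta$; taking the particular index $\sigma = e_i$ (which has $e_{e_i} = e_i$) shows $e_i \subseteq \Sigma_i$; and since $\Sigma_i$ then contains the non-tree edge $e_i$ of length $r_m$, it lies in neither $\VR{r_{m-1}}{V}$ nor $T_{r_m}$, placing it in $\RC{m}$.

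For the second half I would proceed by contradiction, using \cref{edelta-has-good-edges} as the main lever. Write $e_i = \{a, b\}$. The first step is to establish the \emph{characterization}: for every $v \in \Sigma_i$, any edge of $E_\Delta$ of the form $\{a, v\}$ or $\{b, v\}$ is $\leq e_i$ lexicographically. This follows because $v$ sits in some $\sigma$ with $e_\sigma = e_i$, and the sub-simplex $e_i \cup \{v\} \subseteq \sigma$ still lies in $\RC{m} \cap \cl\Delta$ and inherits $e_{e_i \cup \{v\}} = e_i$, so any $E_\Delta$-edge joining $v$ to a vertex of $e_i$ must be $\leq e_i$. Next, assume some $e = \{u, w\} \in E_\Delta$ with $e \subseteq \Sigma_i$ satisfies $e > e_i$. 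If $e$ shares a vertex with $e_i$, say $u = a$, then the characterization applied to $w \in \Sigma_i$ gives $e = \{a, w\} \leq e_i$, contradicting $e > e_i$. Otherwise $e$ is disjoint from $e_i$, and taking $u < w$ without loss of generality, the lex inequality $e > e_i$ forces $u > a$. In both cases $u < b$ and $u > b$, the edge $\{b, u\}$ has its smaller endpoint strictly above $a$ and is hence lex-greater than $e_i$, so the characterization rules out $\{b, u\} \in E_\Delta$; \cref{edelta-has-good-edges} applied to $u \in e$ and $e_i$ then forces $\{a, u\} \in E_\Delta$, and the characterization gives $\{a, u\} \leq e_i$, i.e., $u < b$. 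The same reasoning applied to $w$ yields $a < w < b$. But now \cref{edelta-has-good-edges} applied to the vertex $b \in e_i$ and the edge $e$ forces $\{b, u\}$ or $\{b, w\}$ into $E_\Delta$, both of which were already excluded.

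The principal difficulty is organizational rather than conceptual: one must keep straight the lex comparisons among the six potential edges on $\{a, b, u, w\}$ and $e_i$, notably the repeatedly used fact that any edge $\{b, z\}$ with $z > a$ and $z \neq b$ is lex-greater than $e_i = \{a, b\}$. Once that is set up, the contradiction emerges from playing the characterization of $\Sigma_i$ against two applications of \cref{edelta-has-good-edges} --- one from a vertex of $e$ into $e_i$, the other from $b$ into $e$.
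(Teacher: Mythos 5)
Your proof is correct, and the first half ($\Sigma_i\in\RC{m}$) coincides with the paper's one-line argument. For the second half you take a genuinely different, though closely related, route. The paper argues directly: given any $e_j=\{a,b\}\in\cl{\Sigma_i}\cap E_\Delta$, it picks witnessing simplices $\sigma_a\ni a$ and $\sigma_b\ni b$ with $e_{\sigma_a}=e_{\sigma_b}=e_i=\{x,y\}$, applies \cref{max-simp-on-sphere} once to conclude that $\{a,y\}$ or $\{b,y\}$ is an edge of $E_\Delta$ lying inside $\sigma_a$ or $\sigma_b$ (hence $\leq e_i$ by maximality of $e_{\sigma}$), and reads off $e_j\leq e_i$ from a short case split on whether $a<x$ or $a=x$; this handles the shared-vertex and disjoint cases uniformly in a few lines. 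You instead argue by contradiction, replace the direct use of \cref{max-simp-on-sphere} by three applications of its corollary \cref{edelta-has-good-edges}, and first isolate the key ``characterization'' that any $E_\Delta$-edge joining a vertex of $\Sigma_i$ to a vertex of $e_i$ is $\leq e_i$ --- which is exactly the same maximality-of-$e_\sigma$ observation the paper uses, just stated explicitly. Your case analysis (pinning down $a<u<w<b$ and then deriving the contradiction from \cref{edelta-has-good-edges} applied at $b$) is sound; I checked the lexicographic comparisons, and the disjointness of $e$ and $e_i$ in Case B ensures \cref{edelta-has-good-edges} always produces genuine edges. What your version buys is a cleaner separation of the combinatorial bookkeeping into a reusable characterization; what it costs is length and an avoidable contradiction structure --- the paper's single invocation of \cref{max-simp-on-sphere} yields the inequality $e_j\leq e_i$ directly without ever naming four distinct vertices.
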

\begin{proof}
Note that $\Sigma_i\subseteq \Delta\in \VR{r_m}{V}$ is a simplex and it is contained in $\RC{m}$, because it is a coface of the non-tree edge $e_i$ of length $r_m$.

To prove the second claim, let $e_j\in \cl{\Sigma_i}\cap E_\Delta$ be any edge. Write $e_i=\{x,y\}$ with $x<y$ and $e_j=\{a,b\}$ with $a<b$. By construction of $\Sigma_i$, there exist simplices $\sigma_a,\sigma_b\in\RC{m}\cap\cl{\Delta}$ with $a\in\sigma_a,b\in\sigma_b$ and $e_{\sigma_a}=e_{\sigma_b}=e_i$.
Note that $\{x,y,a\}\subseteq \sigma_a$ and $\{x,y,b\}\subseteq \sigma_b$.

By \cref{max-simp-on-sphere}, we have $x,y\in \Sph{r_{m}}{a}\cup\Sph{r_{m}}{b}$ and therefore $d(a,y)=r_{m}$ (implying $a \neq y$) or $d(b,y)=r_{m}$ (implying $b \neq y$).
As $\{a,y\}\subseteq \sigma_a$ and $\{b,y\}\subseteq \sigma_b$, this implies $\{a,y\} \leq e_{\sigma_a}=e_i=\{x,y\}$ or $\{b,y\}\leq e_{\sigma_b}=e_i=\{x,y\}$, respectively. 
In particular, we have $a \leq x$ or $a<b \leq x$, and
if $a=x$, then $e_j\subseteq \sigma_b$. In any case, $e_j < e_i = e_{\sigma_b}$ as claimed.
\end{proof}

This lemma implies that $N_\Delta=\{[e_i, \Sigma_i ] \}_{i=1}^{q}$ is a collection of disjoint intervals.
It follows from \cref{simplex-collapse} that for each $j\in \{1,\dots,q\}$ the interval~$[e_j,\Sigma_j]$ is the union
\begin{align}
\label{perturbed-gradient-coarsens-canonical}
 [e_j,\Sigma_j]=\bigcup \{ [\cup S,(\cup S)\cup L_\Delta] \mid S\subseteq E_\Delta,\ e_j \text{ maximal element of }\cl{(\cup S)} \cap E_\Delta \}
\end{align}
and that $N_\Delta$
partitions $\RC{m}\cap \cl{\Delta}$. Moreover, it is the discrete gradient of the function
\begin{align}
\label{perturbed-gradient-function}
f_\Delta\colon \cl{\Delta}\to \mathbb{R},\ \sigma\mapsto\begin{cases}
                                                           i & \sigma\in [e_i, \Sigma_i ]\\
                                                           \dim\sigma-\dim \Delta & \sigma \in \K{m}
                                                          \end{cases}
\end{align}
and the intervals are regular, because $L_\Delta$ is nonempty (\cref{point-on-path-interior}). By \cref{collapsing-theorem}, $N_\Delta$ induces a collapse $\cl{\Delta}\searrow K_m\cap\cl{\Delta}$.
Therefore, the total order on $V$ induces a symbolic perturbation scheme on the edges, establishing the situation of a generic tree metric as in \cref{non-degenerate-tree-metric}.

\begin{arxiv}
\begin{example}
\label{example-perturbed-gradient}
Recalling the tree metric from \cref{example-generic-gradient}, we get \[N_\Delta=\{[\{a,c\},\{a,b,c\}],[\{a,d\},\{a,b,d\}],[\{c,d\},\{a,b,c,d\}]\}.\] Note that this gradient is different from $W_\Delta$.
 \end{example}
\end{arxiv}

Consider the union $N_m=\bigcup_\Delta N_\Delta$, where~$\Delta$ runs over all maximal simplices in $\RC{m}$. We call $N=\bigcup_m N_m$ the \emph{perturbed gradient}. By \eqref{perturbed-gradient-coarsens-canonical}, the perturbed gradient $N$ coarsens the canonical gradient $W$. Analogously to \cref{tree-collapse}, we obtain the following result.
\begin{theorem}
\label{tree-collapse-perturbed}
 The perturbed gradient is a discrete gradient on $\cl{(V)}$.
 For every $m\in \{1,\dots,l\}$, it induces a sequence of collapses
  \[\VR{r_{m}}{V}\searrow  \VR{r_{m-1}}{V} \cup T_{r_{m}} \searrow T_{r_m} . \]
\end{theorem}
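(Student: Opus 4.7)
The plan is to mimic the proof of \cref{tree-collapse} essentially verbatim, replacing the per-simplex gradient $W_\Delta$ by $N_\Delta$ and invoking the same two gluing lemmas. All the substantive work has already been done in setting up $N_\Delta$: the lemmas preceding the statement show that $N_\Delta$ is a discrete gradient on $\cl{\Delta}$ (it is the discrete gradient of the function $f_\Delta$ from \eqref{perturbed-gradient-function}), that its intervals are regular (by \cref{point-on-path-interior}, $L_\Delta \neq \emptyset$), and that it partitions $\RC{m} \cap \cl{\Delta}$, inducing a collapse $\cl{\Delta} \searrow \K{m} \cap \cl{\Delta}$.

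First I would assemble $N_m = \bigcup_{\Delta} N_\Delta$, where $\Delta$ ranges over the maximal simplices of $\RC{m}$. To apply \cref{union-of-gradients} with the cover $\{\cl{\Delta}\} \cup \{\cl{\sigma} \mid \sigma \in \K{m}\}$ of $\VR{r_m}{V}$, I check the two hypotheses. For any simplex $\sigma \in \VR{r_m}{V}$: if $\sigma \in \K{m}$, then $\cl{\sigma}$ itself is the unique minimal subcomplex containing it (equipped with the empty gradient, so $\sigma$ is critical there) and $\sigma$ is critical for every $N_\Delta$, since $N_\Delta$ partitions only $\RC{m} \cap \cl{\Delta}$; if $\sigma \in \RC{m}$, then $\sigma$ lies in a unique maximal simplex $\Delta$ of $\RC{m}$ by \cref{contained-in-max-simplex} applied to any non-tree edge $e_\sigma \subseteq \sigma$ of length $r_m$, so $\cl{\Delta}$ is the unique minimal cover element containing $\sigma$, and by \cref{maximal-simplices-independent} any other maximal simplex $\Delta'$ of $\RC{m}$ meets $\Delta$ inside $\K{m}$, hence does not contain $\sigma$ at all. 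Thus the hypotheses hold and $N_m$ is a discrete gradient on $\VR{r_m}{V}$.

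Next I would form $N = \bigcup_m N_m$ and apply \cref{union-of-gradients} once more to the nested cover $\{\VR{r_m}{V}\}_m$ of $\cl{(V)}$: a simplex $\sigma$ lies in the unique minimal $\VR{r_m}{V}$ with $m$ determined by $\diam\sigma = r_{m-1}$ (so $\sigma \in \K{m}$, critical for $N_m$) or $\diam\sigma = r_m$ with $\sigma \in \RC{m}$, and it is automatically critical for $N_{m'}$ with $m' \neq m$ because $N_{m'}$ partitions $\RC{m'}$, a set disjoint from $\sigma$'s diameter class. Hence $N$ is a discrete gradient on $\cl{(V)}$.

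Finally, since $N_m$ partitions $\RC{m} = \VR{r_m}{V} \setminus \K{m}$ by construction, \cref{collapsing-theorem} yields the collapse $\VR{r_m}{V} \searrow \K{m} = \VR{r_{m-1}}{V} \cup T_{r_m}$. The only critical simplices of $N$ are the vertices and the tree edges (visible from $f_\Delta$, under which no simplex of $\cl{\Delta} \setminus \K{m}$ is critical), so a further application of \cref{collapsing-theorem} collapses $\K{m}$ onto $T_{r_m}$, completing the sequence. I do not anticipate a genuine obstacle here: the only mildly delicate point is verifying the two bullet conditions of \cref{union-of-gradients} when gluing the $N_\Delta$, which reduces cleanly to \cref{maximal-simplices-independent} and \cref{contained-in-max-simplex}.
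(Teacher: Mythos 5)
Your proposal is correct and follows exactly the route the paper intends: the paper gives no separate proof of \cref{tree-collapse-perturbed} but simply declares it "analogous to \cref{tree-collapse}", and your argument is precisely that analogy carried out — assembling the $N_\Delta$ via \cref{maximal-simplices-independent} and \cref{union-of-gradients}, then applying \cref{collapsing-theorem} — with the cover conditions checked in more detail than the paper bothers to. No gaps.
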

 \begin{remark}
 As the lower bounds of the intervals in the perturbed gradient are edges, it follows from \cref{tree-collapse-perturbed} that these collapses can be expressed as \emph{edge collapses} \cite{MR4117732}, a notion that is similar to the elementary strong collapses described in \cref{remark-strong-collapse}.
 \end{remark}

\subsubsection{The apparent pairs gradient}
Finally, we show that for a specific total order of $V$, which we describe next, the perturbed gradient is refined by the \emph{zero persistence apparent pairs} of the $\diam$-lexicographic order.

From now on, assume that the tree $T$ is rooted at an arbitrary vertex and orient every edge away from this point. Let $\leq_V$ be the partial order on $V$ where $u$ is smaller than $w$ if there exists an oriented path $u\rightsquigarrow w$. In particular, we have the identity path $\op{id}\colon u\rightsquigarrow u$. Note that for any two vertices $u,w\in V$ the unique shortest unoriented path $u\leftrightsquigarrow w$ can be written uniquely as a zig-zag
$u\stackrel{\gamma}{\leftsquigarrow} z \stackrel{\eta}{\rightsquigarrow} w, $
where $z$ is the greatest point with $z\leq_V u$, $z\leq_V w$, and~$\gamma,\eta$ are oriented paths in %
$T$ that intersect only in $z$. If $w\leftrightsquigarrow p$ is another unique shortest unoriented path with the zig-zag $w\stackrel{{\varphi}}{\leftsquigarrow} z' \stackrel{{\mu}}{\rightsquigarrow} p$, then we can form the following diagram
\begin{equation}
 \label{path-diagram}
\begin{tikzcd}
&& z''\arrow[swap,squiggly]{dl}{\xi}\arrow[squiggly]{dr}{\lambda} &&\\
&z\arrow[swap,squiggly]{dl}{\gamma}\arrow[squiggly]{dr}{\eta}& & z'\arrow[squiggly]{dl}[swap]{\varphi}\arrow[squiggly]{dr}{\mu}&\\                                                                                                                                                                                                                                                                                                                                                                                                                                                                                                                                               u & & w& & p,                                                                                                                                                                                                                                                                                                                                                                                                                                                                                                                                              \end{tikzcd}
\end{equation}
where $z''$ is the greatest point with $z''\leq_V z,z''\leq_V z'$. Moreover, as $T$ has no cycles, it follows that either $\xi$ or $\lambda$ is the identity path and $\varphi\circ \lambda= \eta$ or $\eta\circ \xi=\varphi$, respectively.
\begin{arxiv}
 \begin{remark}
 Note that in general the oriented paths $z''\rightsquigarrow u$ and $z''\rightsquigarrow p$ can intersect in a point different from $z''$. In particular, the zig-zag
 $u\stackrel{}{\leftsquigarrow} z'' \stackrel{}{\rightsquigarrow} p$ is not necessarily a decomposition of the unique shortest unoriented path $u\leftrightsquigarrow p$.
\end{remark}
\end{arxiv}

Extend the partial order $\leq_V$ on $V$ to a total order $<$ and consider the $\diam$-lexicographic order on simplices. As this total order on the simplices extends $<$ under the identification $v\mapsto \{v\}$, we will also denote it by $<$.
The following lemma directly implies \cref{discrete-rips-tree-fitlered-collapse}.

\begin{lemma}
\label{collapse_apparent}
The intervals in the perturbed gradient $N$ are refined by apparent pairs with respect to $<$.
For every $m\in \{1,\dots,l\}$, the zero persistence apparent pairs induce the~collapse
 \[\VR{r_{m}}{V}\searrow  \VR{r_{m-1}}{V} \cup T_{r_{m}} . \]
\end{lemma}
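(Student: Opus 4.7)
My approach is to exhibit, within each interval $[e_i,\Sigma_i]$ of the perturbed gradient $N$, an explicit refinement by zero persistence apparent pairs with respect to the $\diam$-lexicographic order: pair every $\sigma \in [e_i,\Sigma_i]$ not containing $v_i^* := \min_{<}(\Sigma_i \setminus e_i)$ with $\tau := \sigma \cup \{v_i^*\}$. The vertex $v_i^*$ exists because $L_\Delta \subseteq \Sigma_i \setminus e_i$ is nonempty by \cref{point-on-path-interior}, this collection of pairs partitions $[e_i,\Sigma_i]$, and zero persistence is automatic because every simplex in the interval has diameter $r_m$. Once the apparent pair condition is verified, \cref{tree-collapse-perturbed} together with \cref{collapsing-theorem} yields the claimed collapse.

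Every cofacet of $\sigma$ in $\VR{r_m}{V}$ has diameter exactly $r_m$, and any facet of $\tau$ with diameter less than $r_m$ is $\diam$-lex-dominated by $\sigma$, so the apparent pair condition reduces to two lexicographic comparisons on diameter-$r_m$ simplices. For the minimum cofacet, I would suppose a cofacet $\sigma \cup \{w\} \in \VR{r_m}{V}$ with $w \notin \sigma$ and $w < v_i^*$; if $w \in \Sigma_i$ this contradicts the minimality of $v_i^*$, so one may assume $e_{\sigma \cup \{w\}} = e_j > e_i$, forcing $e_j = \{w,u\} \in E_\Delta$ with $u \in \sigma$. Decomposing the tree path $w \leftrightsquigarrow u$ at its LCA $z$: when $z \notin \{w,u\}$, the strict intermediate vertex $z$ lies in $L_\Delta \subseteq \Sigma_i \setminus e_i$ by \cref{point-on-path-interior}, and $z \leq_V w$ yields $v_i^* \leq z \leq w$, a contradiction; the case $z = u$ is identical, using that the oriented subpath $u \rightsquigarrow w$ of the non-tree edge $\{w,u\}$ contains a strict intermediate vertex. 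For the maximum facet, any $w \in \tau$ with $w < v_i^*$ must belong to $e_i$ (since $\sigma \subseteq \Sigma_i$ and $\min(\Sigma_i \setminus e_i) = v_i^*$), and I would show $\diam(\tau \setminus \{w\}) < r_m$ by ruling out all edges of length $r_m$ in $\tau \setminus \{w\}$: a non-tree survivor lies in $E_\Delta$ and a direct lex comparison shows it exceeds $e_i$, contradicting $e_\tau = e_i$; a tree edge of length $r_m$ inside $\Delta$ is impossible because removing it would partition $T$ into two components each forced to contain one of $x_i, y_i$, making the edge equal to the non-tree edge $e_i$.

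The main obstacle is the subcase $z = w$ of the minimum cofacet argument, where $w$ is an ancestor of $u$ and the intermediate vertices on $w \rightsquigarrow u$ are all descendants of $w$ and therefore do not bound $w$ from below by any $L_\Delta$-vertex. I would handle it by a further analysis on $u$: if $u = x_i$, then $w <_V x_i$ forces $\{w,x_i\} < e_i$, contradicting $e_j > e_i$; if $u = y_i$, applying the LCA diagram \eqref{path-diagram} to the triple $(x_i, y_i, w)$ identifies $w$ with a strict intermediate vertex of the tree path $x_i \rightsquigarrow y_i$, so $w \in L_\Delta \subseteq \Sigma_i$ contradicts $w \notin \Sigma_i$; and if $u \in \sigma \setminus e_i$, then $\operatorname{LCA}(u, x_i)$ must be a strict ancestor of $w$ (else $w$ would be an ancestor of $x_i$, contradicting $w > x_i$), and the identity $d(u,\operatorname{LCA}(u, x_i)) = r_m + d(w,\operatorname{LCA}(u, x_i))$ combined with $d(u,x_i) \leq r_m$ forces $w = \operatorname{LCA}(u, x_i)$, a contradiction.
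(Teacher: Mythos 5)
Your proposal is correct and follows the same overall strategy as the paper's proof: pair each simplex of $[e_i,\Sigma_i]$ with its symmetric difference by the minimal vertex of $\Sigma_i\setminus e_i$, note that zero persistence and the reduction to lexicographic comparisons among diameter-$r_m$ simplices are automatic, and then verify the minimal-cofacet and maximal-facet conditions by combining \cref{point-on-path-interior} with lexicographic inequalities forced by the maximality of $e_i$ in $\cl{\Sigma_i}\cap E_\Delta$. The execution of the minimal-cofacet step differs: the paper first uses \cref{max-simp-on-sphere} to extract an edge $g\in E_\Delta$ joining the new vertex to an endpoint of $e_i$, deduces $\min e_i < p$, and then runs the zig-zag analysis of diagram \eqref{path-diagram} on the path of $e_i$ composed with the path to $p$; you instead take the maximal edge $e_j=\{w,u\}$ of $\sigma\cup\{w\}$ and case-split on the position of $\operatorname{LCA}(w,u)$, which concentrates the difficulty in the subcase $w\leq_V u$ and requires the further split on whether $u$ is an endpoint of $e_i$ or a vertex of $\sigma\setminus e_i$ (the latter settled by your metric identity). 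Both routes work; yours trades the appeal to \cref{max-simp-on-sphere} for an extra metric computation. One step should be made explicit: in the subcase $u=y_i$, the LCA diagram alone does not place $w$ on the interior of the path $x_i\leftrightsquigarrow y_i$ --- you must first exclude $w<_V\operatorname{LCA}(x_i,y_i)$, which would give $w\leq_V x_i$ and hence $w<x_i$, contradicting the inequality $w>x_i$ that the comparison $e_j>e_i$ forces (exactly the kind of argument you already use in the other two subcases). Your direct argument that no tree edge of length $r_m$ lies inside $\Delta$ is also fine, though it is already contained in \cref{contained-in-max-simplex}.
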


\begin{proof}
Consider a maximal simplex $\Delta\in \RC{m}$. Recall that $N_\Delta$ is the discrete gradient of the function $f_\Delta \colon \cl{\Delta}\to\mathbb{R}$ defined in \eqref{perturbed-gradient-function}, using the same vertex order as above.
By \cref{apparent-equal-gradient}, the zero persistence apparent pairs with respect to the $f_\Delta$-lexicographic order $<_{f_\Delta}$ are precisely the gradient pairs of the minimal vertex refinement of $N_\Delta$.

We next show that each apparent pair $(\sigma,\tau=\sigma\cup\{v\})\subseteq [e_i,\Sigma_i]$ with respect to  $<_{f_\Delta}$, where $v$ is the minimal vertex in $\Sigma_i\setminus e_i$, is an apparent pair with respect to $<$. 
Clearly, these pairs have persistence zero with respect to the diameter function, as they appear in the same interval of the perturbed gradient.
As the apparent pairs of $<_{f_\Delta}$, taken over all $\Delta$, yield a partition of $\RC{m}=\VR{r_{m}}{V}\setminus(\VR{r_{m-1}}{V} \cup T_{r_{m}})$, the same is then true for the apparent pairs of $<$. Thus, by \cref{collapsing-theorem}, the apparent pairs gradient induces a collapse $\VR{r_{m}}{V}\searrow  \VR{r_{m-1}}{V} \cup T_{r_{m}} $.

 First, let $\sigma\cup\{p\}\in \RC{m}$ be a cofacet of $\sigma$ not equal to $\tau$. We show that we must have~$\tau < \sigma\cup\{p\}$, proving that $\tau$ is the minimal cofacet of $\sigma$ with respect to $<$: If $p\in\Sigma_i$, then~$p\in \Sigma_i\setminus e_i$, as $p\notin \sigma\supseteq e_i$, and the statement is true by minimality of $v$ in the minimal vertex refinement. Now assume that~$p\notin\Sigma_i$ and write $e_i=\{u,w\}$ with $u<w$. By \eqref{perturbed-gradient-coarsens-canonical}, we have $L_\Delta\subseteq \Sigma_i$ and hence it follows that $p\notin L_\Delta$ and that the point $p$ is contained in an edge in $E_\Delta$, by definition of $L_\Delta$. It follows from \cref{max-simp-on-sphere} that $p$ together with at least one vertex of $e_i$ forms an edge in $E_\Delta$. Call this edge~$g$; if there are two such edges, consider the larger one, and call it $g$. From $\{u,w,p\}\subseteq \Delta$ and $p\notin \Sigma_i$ we get $e_i< g$: The edge $e_i$ is not the maximal edge of the two simplex $\{u,w,p\}$, since otherwise $p$ would be contained in $\Sigma_i$. Hence, one of the two other edges is maximal, and that edge is $g$ by definition. Considering the two possible cases $g=\{u,p\}$ and $g=\{w,p\}$, we must have $u<p$. We will argue that $v<p$ holds, which proves $\tau=\sigma\cup\{v\}<\sigma\cup\{p\}$.

 Consider the diagram \eqref{path-diagram}. If $\gamma\neq \operatorname{id}$, then it follows from the fact that $e_i=\{u,w\}$ is not a tree edge that along the unique shortest path $u\leftrightsquigarrow w$ there exists a vertex $x$ distinct from~$u$ and $w$ with $x<u<p$. Then $x\in L_\Delta\subseteq \Sigma_i\setminus e_i$ by \cref{point-on-path-interior}, and as $v$ is the minimal element in $\Sigma_i\setminus e_i$, we get~$v\leq x<p$.

 If $\gamma= \operatorname{id}$, then $u=z$, and it follows from $d(w,p)\leq r_{m}$ and $p\notin e_i=\{u,w\}$ that we must have $\lambda\neq \operatorname{id}$ and $\xi=\operatorname{id}$: Otherwise $\lambda=\operatorname{id}$ and $u=z$ lies on $\varphi$. Therefore, $u$ lies on the unique shortest path from $w$ to $p$ and $d(w,p) = d(w,u) + d(u,p)=r_m+d(u,p)>r_{m}$, yielding a contradiction. Thus, the unique shortest path $(u=z)\leftrightsquigarrow p$ decomposes as $u\rightsquigarrow z'\rightsquigarrow p$, where $u\rightsquigarrow z'$ is contained in $u\rightsquigarrow z'\rightsquigarrow w$. Note that $u\neq z'$, because $\lambda\neq\operatorname{id}$. Hence, as $e_i$ is not a tree edge, the immediate successor $x$ of $u$ on the path $u\rightsquigarrow w$ is distinct from $u$ and $w$ with $x\leq z'$. This point satisfies $x\leq z'\leq p$, and it follows from \cref{point-on-path-interior} that we have~$x\in L_\Delta\subseteq \Sigma_i\setminus e_i$. Because $p\notin L_\Delta$ we even have $x<p$. Therefore, as $v$ is the minimal vertex in $\Sigma_i\setminus e_i$, it follows that $v\leq x<p$.

 It remains to prove that $\sigma$ is the maximal facet of $\tau$ with respect to $<$. We write $e_i=\{u,w\}$ with $u<w$ and $\tau=\{b_0,\dots,b_{\dim \tau}\}$ with $b_0<\dots<b_{\dim \tau}$. As $e_i\subseteq \tau$, there are indices $k_1<k_2$ with $u=b_{k_1}<b_{k_2}=w$.
 If $k_1>0$, then $v=b_0$, so $\sigma$ is of the form $\{b_1,\dots,b_{\dim \tau}\}$ and is the maximal facet of $\tau$ with respect to $<$ as claimed.
 Now assume $k_1=0$. If $\tau$ contains no edges $e\in E_\Delta$ other than $e_i$, then the facets~$\tau\setminus \{u\}$ and $\tau\setminus \{w\}$ are both contained in $\VR{r_{m-1}}{V}$, because they do not contain any edge of length $r_m$, and the maximal facet of $\tau$ is $\tau\setminus \{x\}$ with $x$ the minimal vertex in $\tau\setminus e_i$. By assumption, we have $x=v$ and hence $\tau\setminus \{x\}=\tau\setminus \{v\}=\sigma$. If $\tau$ contains other edges~$e\neq e_i$ with $e\in E_\Delta$, label them $s_1,\dots,s_a$. As $e_i\subseteq \tau\subseteq \Sigma_i$, it follows from \cref{largest-edge-sigma-i} that we have $s_b<e_i$ for all $b$. Because of this and our assumption $k_1=0$, i.e., $u$ is the minimal vertex of $\tau$, we have~$s_b=\{u,x_b\}<\{u,w\}=e_i$ with $u<x_b<w$. Therefore, the facet~$\{b_1,\dots,b_{\dim\tau}\}$ contains no edges in $E_\Delta$ and hence it is contained in $\VR{r_{m-1}}{V}$. The facet $\{b_0,b_2,\dots,b_{\dim\tau}\}$ of $\tau$ contains $e_i$, hence it is an element of $\RC{m}$, and so it is maximal among the facets containing $b_0$, implying that it is the maximal facet of $\tau$ with respect to~$<$. Because $b_1$ is the minimal vertex in $\tau\setminus e_i$ and $v\in \tau\setminus e_i$, it follows from the minimality of $v\in\Sigma_i\setminus e_i$ that we have $b_1=v$, implying $\{b_0,b_2,\dots,b_{\dim\tau}\}=\sigma$. Therefore, $\sigma$ is the maximal facet of $\tau$ with respect to $<$.
\end{proof}

  \begin{remark}
   The preceding \cref{collapse_apparent} also implies \cref{discrete-rips-fitlered-collapse} in the special case of tree metrics: if~$u> t\geq 2\nu(V)=\max_{e\in E}{l}(e)$ are real numbers, then $T_t=T$ is the entire tree, and we obtain collapses~$\VR{u}{V}\searrow \VR{t}{V}\searrow T\searrow \{*\}$.
   If all edges of $T$ have the same length, it turns out that the collapse $T\searrow \{*\}$ is also induced by the apparent pairs gradient for the same order $<$.
  \end{remark}
  \begin{arxiv}
\begin{remark}
 For metrics other than tree metrics, 
 the collapse $\VR{u}{X}\searrow \VR{t}{X}$ 
 from \cref{discrete-rips-fitlered-collapse}
  is not always achieved by the apparent pairs gradient.
 Consider the following weighted graph:
  \[
\begin{tikzpicture}
 \node (a) at (0,0) {a};
 \node (b) at (1,.5) {b};
 \node (c) at (1,-.5) {c};
 \node (d) at (2.5,0) {d};
 \node (e) at (5,0) {e};
 \draw[-,midway] (a) -- node[above] {\small 1} (b);
 \draw[-,midway] (a) -- node[below] {\small 1} (c);
 \draw[-,midway] (b) -- node[above] {\small 5} (d);
 \draw[-,midway] (c) -- node[below] {\small 5} (d);
 \draw[-,midway] (d) -- node[above] {\small 10} (e);

 \end{tikzpicture}
 \]
 For this graph metric, the hyperbolicity is $\delta(X)=1$ and the geodesic defect is $\nu(X)=5$. Therefore, we have $4\delta(X)+2\nu(X)=14$. The maximal Vietoris--Rips complex has $31$ simplices in total. For the apparent pairs gradient only the simplices $\{b,e\}$ and $\{b,d,e\}$ are critical, and both have diameter $15$. Thus, the collapse $\VR{15}{X}\searrow\VR{14}{X}$ is not induced by the apparent pairs gradient.
\end{remark}
\end{arxiv}
\begin{arxiv}
\begin{example}
Revisiting the tree metric from \cref{example-generic-gradient} once more, we see that the apparent pairs of the lexicographically refined Vietoris--Rips filtration with diameter two are
\[(\{a,c\},\{a,b,c\}),(\{a,d\},\{a,b,d\}),(\{c,d\},\{a,c,d\}),(\{b,c,d\},\{a,b,c,d\}).\]
Note that together with \cref{example-perturbed-gradient} this shows that the canonical gradient, the perturbed gradient, and the apparent pairs gradient can all be different in general.
 \end{example}
\end{arxiv}

\bibliography{literature_discrete_rips_lemma.bib}

\begin{thebibliography}{10}

\bibitem{MR3096593}
Micha{\l} Adamaszek.
\newblock Clique complexes and graph powers.
\newblock {\em Israel J. Math.}, 196(1):295--319, 2013.
\newblock \href {https://doi.org/10.1007/s11856-012-0166-1}
  {\path{doi:10.1007/s11856-012-0166-1}}.

\bibitem{MR4130978}
Micha{\l} Adamaszek, Henry Adams, Ellen Gasparovic, Maria Gommel, Emilie
  Purvine, Radmila Sazdanovic, Bei Wang, Yusu Wang, and Lori Ziegelmeier.
\newblock On homotopy types of {V}ietoris--{R}ips complexes of metric gluings.
\newblock {\em J. Appl. Comput. Topol.}, 4(3):425--454, 2020.
\newblock \href {https://doi.org/10.1007/s41468-020-00054-y}
  {\path{doi:10.1007/s41468-020-00054-y}}.

\bibitem{MR3003901}
Dominique Attali, Andr\'{e} Lieutier, and David Salinas.
\newblock Vietoris-{R}ips complexes also provide topologically correct
  reconstructions of sampled shapes.
\newblock {\em Comput. Geom.}, 46(4):448--465, 2013.
\newblock \href {https://doi.org/10.1016/j.comgeo.2012.02.009}
  {\path{doi:10.1016/j.comgeo.2012.02.009}}.

\bibitem{MR3968598}
Dominique Attali, Andr\'{e} Lieutier, and David Salinas.
\newblock When convexity helps collapsing complexes.
\newblock In {\em 35th {I}nternational {S}ymposium on {C}omputational
  {G}eometry}, volume 129 of {\em LIPIcs. Leibniz Int. Proc. Inform.}, pages
  Art. No. 11, 15. Schloss Dagstuhl. Leibniz-Zent. Inform., Wadern, 2019.
\newblock \href {https://doi.org/10.4230/LIPIcs.SoCG.2019.11}
  {\path{doi:10.4230/LIPIcs.SoCG.2019.11}}.

\bibitem{barmak-minian}
Jonathan~Ariel Barmak and Elias~Gabriel Minian.
\newblock Strong homotopy types, nerves and collapses.
\newblock {\em Discrete Comput. Geom.}, 47(2):301--328, 2012.
\newblock \href {https://doi.org/10.1007/s00454-011-9357-5}
  {\path{doi:10.1007/s00454-011-9357-5}}.

\bibitem{MR4298669}
Ulrich Bauer.
\newblock Ripser: efficient computation of {V}ietoris-{R}ips persistence
  barcodes.
\newblock {\em J. Appl. Comput. Topol.}, 5(3):391--423, 2021.
\newblock \href {https://doi.org/10.1007/s41468-021-00071-5}
  {\path{doi:10.1007/s41468-021-00071-5}}.

\bibitem{MR3605986}
Ulrich Bauer and Herbert Edelsbrunner.
\newblock The {M}orse theory of \v{C}ech and {D}elaunay complexes.
\newblock {\em Trans. Amer. Math. Soc.}, 369(5):3741--3762, 2017.
\newblock \href {https://doi.org/10.1090/tran/6991}
  {\path{doi:10.1090/tran/6991}}.

\bibitem{BleherHahnEtAl2021}
Michael Bleher, Lukas Hahn, Juan~Angel Patino-Galindo, Mathieu Carriere, Ulrich
  Bauer, Raul Rabadan, and Andreas Ott.
\newblock Topology identifies emerging adaptive mutations in
  {{SARS}}-{{CoV}}-2.
\newblock Preprint, 2021.
\newblock \href {http://arxiv.org/abs/2106.07292} {\path{arXiv:2106.07292}}.

\bibitem{MR4117732}
Jean-Daniel Boissonnat and Siddharth Pritam.
\newblock Edge collapse and persistence of flag complexes.
\newblock In {\em 36th {I}nternational {S}ymposium on {C}omputational
  {G}eometry}, volume 164 of {\em LIPIcs. Leibniz Int. Proc. Inform.}, pages
  Art. No. 19, 15. Schloss Dagstuhl. Leibniz-Zent. Inform., Wadern, 2020.
\newblock \href {https://doi.org/10.4230/LIPIcs.SoCG.2020.19}
  {\path{doi:10.4230/LIPIcs.SoCG.2020.19}}.

\bibitem{MR2883376}
M.~Bonk and O.~Schramm.
\newblock Embeddings of {G}romov hyperbolic spaces.
\newblock In {\em Selected works of {O}ded {S}chramm. {V}olume 1, 2}, Sel.
  Works Probab. Stat., pages 243--284. Springer, New York, 2011.
\newblock With a correction by Bonk.
\newblock \href {https://doi.org/10.1007/978-1-4419-9675-6\_10}
  {\path{doi:10.1007/978-1-4419-9675-6\_10}}.

\bibitem{MR1835418}
Dmitri Burago, Yuri Burago, and Sergei Ivanov.
\newblock {\em A course in metric geometry}, volume~33 of {\em Graduate Studies
  in Mathematics}.
\newblock American Mathematical Society, Providence, RI, 2001.
\newblock \href {https://doi.org/10.1090/gsm/033} {\path{doi:10.1090/gsm/033}}.

\bibitem{chan2013topology}
Joseph~Minhow Chan, Gunnar Carlsson, and Raul Rabadan.
\newblock Topology of viral evolution.
\newblock {\em Proceedings of the National Academy of Sciences},
  110(46):18566--18571, 2013.
\newblock \href {https://doi.org/10.1073/pnas.1313480110}
  {\path{doi:10.1073/pnas.1313480110}}.

\bibitem{MR3275299}
Fr\'{e}d\'{e}ric Chazal, Vin de~Silva, and Steve Oudot.
\newblock Persistence stability for geometric complexes.
\newblock {\em Geom. Dedicata}, 173:193--214, 2014.
\newblock \href {https://doi.org/10.1007/s10711-013-9937-z}
  {\path{doi:10.1007/s10711-013-9937-z}}.

\bibitem{MR1904284}
R.~Esp\'{\i}nola and M.~A. Khamsi.
\newblock Introduction to hyperconvex spaces.
\newblock In {\em Handbook of metric fixed point theory}, pages 391--435.
  Kluwer Acad. Publ., Dordrecht, 2001.
\newblock \href {https://doi.org/10.1007/978-94-017-1748-9_13}
  {\path{doi:10.1007/978-94-017-1748-9_13}}.

\bibitem{MR2351587}
Steven~N. Evans.
\newblock {\em Probability and real trees}, volume 1920 of {\em Lecture Notes
  in Mathematics}.
\newblock Springer, Berlin, 2008.
\newblock Lectures from the 35th Summer School on Probability Theory held in
  Saint-Flour, July 6--23, 2005.
\newblock \href {https://doi.org/10.1007/978-3-540-74798-7}
  {\path{doi:10.1007/978-3-540-74798-7}}.

\bibitem{MR1612391}
Robin Forman.
\newblock Morse theory for cell complexes.
\newblock {\em Adv. Math.}, 134(1):90--145, 1998.
\newblock \href {https://doi.org/10.1006/aima.1997.1650}
  {\path{doi:10.1006/aima.1997.1650}}.

\bibitem{MR2537376}
Ragnar Freij.
\newblock Equivariant discrete {M}orse theory.
\newblock {\em Discrete Math.}, 309(12):3821--3829, 2009.
\newblock \href {https://doi.org/10.1016/j.disc.2008.10.029}
  {\path{doi:10.1016/j.disc.2008.10.029}}.

\bibitem{Gromov1987}
M.~Gromov.
\newblock Hyperbolic groups.
\newblock In {\em Essays in group theory}, volume~8 of {\em Math. Sci. Res.
  Inst. Publ.}, pages 75--263. Springer, New York, 1987.
\newblock \href {https://doi.org/10.1007/978-1-4613-9586-7\_3}
  {\path{doi:10.1007/978-1-4613-9586-7\_3}}.

\bibitem{MR1867354}
Allen Hatcher.
\newblock {\em Algebraic topology}.
\newblock Cambridge University Press, Cambridge, 2002.

\bibitem{MR2164921}
Patricia Hersh.
\newblock On optimizing discrete {M}orse functions.
\newblock {\em Adv. in Appl. Math.}, 35(3):294--322, 2005.
\newblock \href {https://doi.org/10.1016/j.aam.2005.04.001}
  {\path{doi:10.1016/j.aam.2005.04.001}}.

\bibitem{MR182949}
J.~R. Isbell.
\newblock Six theorems about injective metric spaces.
\newblock {\em Comment. Math. Helv.}, 39:65--76, 1964.
\newblock \href {https://doi.org/10.1007/BF02566944}
  {\path{doi:10.1007/BF02566944}}.

\bibitem{MR2368284}
Jakob Jonsson.
\newblock {\em Simplicial complexes of graphs}, volume 1928 of {\em Lecture
  Notes in Mathematics}.
\newblock Springer-Verlag, Berlin, 2008.
\newblock \href {https://doi.org/10.1007/978-3-540-75859-4}
  {\path{doi:10.1007/978-3-540-75859-4}}.

\bibitem{MR4249617}
Dmitry~N. Kozlov.
\newblock {\em Organized collapse: an introduction to discrete {M}orse theory},
  volume 207 of {\em Graduate Studies in Mathematics}.
\newblock American Mathematical Society, Providence, RI, 2020.

\bibitem{lang}
Urs Lang.
\newblock Injective hulls of certain discrete metric spaces and groups.
\newblock {\em J. Topol. Anal.}, 5(3):297--331, 2013.
\newblock \href {https://doi.org/10.1142/S1793525313500118}
  {\path{doi:10.1142/S1793525313500118}}.

\bibitem{MR1879057}
Janko Latschev.
\newblock Vietoris-{R}ips complexes of metric spaces near a closed {R}iemannian
  manifold.
\newblock {\em Arch. Math. (Basel)}, 77(6):522--528, 2001.
\newblock \href {https://doi.org/10.1007/PL00000526}
  {\path{doi:10.1007/PL00000526}}.

\bibitem{MR4071389}
Michael Lesnick, Ra\'{u}l Rabad\'{a}n, and Daniel I.~S. Rosenbloom.
\newblock Quantifying genetic innovation: mathematical foundations for the
  topological study of reticulate evolution.
\newblock {\em SIAM J. Appl. Algebra Geom.}, 4(1):141--184, 2020.
\newblock \href {https://doi.org/10.1137/18M118150X}
  {\path{doi:10.1137/18M118150X}}.

\bibitem{LimMemoliEtAl2020}
Sunhyuk Lim, Facundo Memoli, and Osman~Berat Okutan.
\newblock Vietoris-{{Rips Persistent Homology}}, {{Injective Metric Spaces}},
  and {{The Filling Radius}}.
\newblock Preprint, 2020.
\newblock \href {http://arxiv.org/abs/2001.07588} {\path{arXiv:2001.07588}}.

\bibitem{MemoliOkutanEtAl2021}
Facundo Memoli, Osman~Berat Okutan, and Qingsong Wang.
\newblock Metric {{Graph Approximations}} of {{Geodesic Spaces}}.
\newblock Preprint, 2018.
\newblock \href {http://arxiv.org/abs/1809.05566} {\path{arXiv:1809.05566}}.

\bibitem{SPBG:SPBG04:157-166}
Vin~de Silva and Gunnar Carlsson.
\newblock {Topological estimation using witness complexes}.
\newblock In Markus Gross, Hanspeter Pfister, Marc Alexa, and Szymon
  Rusinkiewicz, editors, {\em SPBG'04 Symposium on Point - Based Graphics
  2004}. The Eurographics Association, 2004.
\newblock \href {https://doi.org/10.2312/SPBG/SPBG04/157-166}
  {\path{doi:10.2312/SPBG/SPBG04/157-166}}.

\bibitem{MR1512371}
Leopold Vietoris.
\newblock \"{U}ber den h\"{o}heren {Z}usammenhang kompakter {R}\"{a}ume und
  eine {K}lasse von zusammenhangstreuen {A}bbildungen.
\newblock {\em Math. Ann.}, 97(1):454--472, 1927.
\newblock \href {https://doi.org/10.1007/BF01447877}
  {\path{doi:10.1007/BF01447877}}.

\end{thebibliography}

\end{document}